\documentclass[10pt]{amsart}

\title[On the irrationality of cubic fourfolds]{On the irrationality of cubic fourfolds}
\author[Gu\'er\'e]{J\'er\'emy Gu\'er\'e}
\address{Univ. Grenoble Alpes, CNRS, IF, 38000 Grenoble, France}
\email{jeremy.guere@univ-grenoble-alpes.fr}

\usepackage{amsrefs}
\usepackage{amsfonts}
\usepackage{amsmath}
\usepackage{amssymb}
\usepackage{amscd}
\usepackage{array}
\usepackage{subfigure}
\usepackage{tikz}
\usetikzlibrary{arrows}
\usepackage{pifont}
\usepackage{wasysym}
\usepackage{tikz-cd}

\usepackage[all]{xy}

\allowdisplaybreaks[3]

\newcommand{\coeur}{\text{\scalebox{1.16}{\ding{170}}}}

\newcommand{\id}{\mathrm{id}}

\newcommand{\pt}{\mathrm{pt}}

\newcommand{\EE}{\mathbb{E}}
\newcommand{\cN}{\mathcal{N}}

\newcommand{\un}{\textbf{1}}
\newcommand{\vir}{\mathrm{vir}}
\newcommand{\rk}{\mathrm{rk}}

\newcommand{\ext}{\mathrm{ext}}

\newcommand{\ci}{\mathrm{i}}
\newcommand{\PP}{\mathbb{P}}
\newcommand{\CC}{\mathbb{C}}
\newcommand{\ZZ}{\mathbb{Z}}
\newcommand{\NN}{\mathbb{N}}

\newcommand{\QQ}{\mathbb{Q}}

\newcommand{\ev}{\mathrm{ev}}

\renewcommand{\(}{\left(}
\renewcommand{\)}{\right)}

\newcommand{\cF}{\mathcal{F}}

\newcommand{\cM}{\mathcal{M}}

\theoremstyle{plain}

\newtheorem{thm}{Theorem}
\newtheorem*{thm*}{Theorem}
\newtheorem{pro}[thm]{Proposition}
\newtheorem{lem}[thm]{Lemma}
\newtheorem*{lem*}{Lemma}

\newtheorem{cor}[thm]{Corollary}

\newtheorem*{conv*}{Convention}

\theoremstyle{definition}

\newtheorem{dfn}[thm]{Definition}
\newtheorem{nt}[thm]{Notation}

\newtheorem{assumption}[thm]{Assumption}
\newtheorem{rem}[thm]{Remark}
\newtheorem*{rem*}{Remark}
\newtheorem*{rems*}{Remarks}
\newtheorem*{warn*}{Warning}
\newtheorem{exa}[thm]{Example}
\newtheorem*{exa*}{Example}
\newtheorem*{exait*}{\rm \em Example}
\newtheorem*{exadefit*}{\rm \em Example/Definition}
\newtheorem*{cla*}{\rm \em Claim}

\newtheorem*{dfn*}{Definition}

\DeclareMathOperator{\vdim}{vdim}

\usepackage{amsxtra}

\newcommand{\Eu}{\mathrm{Eu}}
\newcommand{\Hdg}{\mathrm{Hdg}}
\newcommand{\Sp}{\mathrm{Sp}}

\def\<{\left\langle}
\def\>{\right\rangle}

\begin{document}
\begin{abstract}
Following the work of Katzarkov--Kontsevich--Pantev--Yu \cite{KKPY} concerning the irrationality of the very general complex cubic fourfold, we prove the following: for every rational smooth complex cubic fourfold, the primitive cohomology is isomorphic as a rational Hodge structure to the (twisted) middle cohomology of a projective K3 surface.
\end{abstract}

\maketitle

\tableofcontents

\setcounter{section}{-1}

\section{Introduction}
In \cite{KKPY}, Katzarkov--Kontsevich--Pantev--Yu develop a framework for using Gromov--Witten theory to study birational geometry.
Since Gromov--Witten theory is deformation
invariant, its utility in distinguishing between rational and irrational smooth cubic fourfolds may initially seem surprising.
However, by combining it with Hodge theory, one can define properties of smooth projective complex varieties that remain invariant under blow-ups at smooth centers, at least for complex dimensions up to four.
This approach is underpinned by \cite[Theorem 1.1]{Iritani}, which describes the behaviour of quantum cohomology under such blow-ups.

In this paper, we adapt the methods of \cite{KKPY} in order to prove the following theorem, which addresses part of Hassett's conjecture (see e.g.~\cite{Hassett}).

\begin{thm*}[see Theorem \ref{rat cubic K3}]
If $X$ is a rational smooth complex cubic fourfold, then there exists a projective K3 surface $\Sigma$ and an isomorphism of (rational) Hodge structures
$$H^4(X,\QQ)_\mathrm{primitive} \simeq H^2(\Sigma,\QQ)(-1).$$
\end{thm*}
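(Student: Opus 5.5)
The plan follows the "atoms" strategy of \cite{KKPY}. To each smooth projective variety $Y$ of dimension at most four we attach its quantum cohomology $F$-bundle, a family of $\ZZ/2$-graded Hodge-type data over a neighbourhood of a generic point of $H^*(Y)$. We then look at the spectral decomposition of the quantum multiplication by the Euler vector field. Each generalized eigenspace (after a suitable local analytic change of coordinates) carries a rational Hodge structure. These pieces are the \emph{atoms}. By \cite[Theorem 1.1]{Iritani}, blowing up a smooth center $Z$ of codimension $r$ replaces the $F$-bundle of $Y$ by that of $Y$ plus $r-1$ copies of that of $Z$. Hence the multiset of atoms changes only by adding or removing atoms of varieties of dimension at most $2$. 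By weak factorization (Abramovich--Karu--Matsuki--W{\l}odarczyk), any birational map $X\dashrightarrow\PP^4$ factors into blow-ups and blow-downs along smooth centers. So the atoms of $X$ and of $\PP^4$ agree modulo atoms coming from points, curves and surfaces.

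The key steps, in order, are as follows. First, compute the atoms of $\PP^4$: they are five one-dimensional pieces with trivial Hodge structures of Tate type. Second, compute the atoms of a cubic fourfold $X$. Its quantum cohomology is semisimple away from a distinguished block. Using the known small quantum product (Givental/mirror theorem for hypersurfaces), the eigenvalues of $c_1\star$ split the even cohomology into several one-dimensional Tate pieces plus one atom $A_X$ whose underlying Hodge structure contains $H^4(X,\QQ)_{\mathrm{prim}}$, of weight 4 and rank $22$, with Hodge numbers $(1,20,1)$. One must check that $A_X$ cannot be further decomposed, using its nontrivial $h^{3,1}$. Third, compare the two sides. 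Since $A_X$ does not occur for $\PP^4$, it must be cancelled by an atom of a blown-up center. Points and curves give Hodge structures of level at most $1$ in odd degree or Tate type, so they cannot carry $h^{3,1}=1$ in even weight. Hence $A_X$ must coincide, as a Hodge structure up to Tate twist, with a summand of $H^2(\Sigma,\QQ)$ for some smooth projective surface $\Sigma$ appearing as a center. This summand is the one with $h^{2,0}=1$ and rank $22$.

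The last step is to upgrade "a Hodge substructure of $H^2(\Sigma,\QQ)$ of K3 type and rank $22$" to "$H^2$ of an actual K3 surface". Two routes are possible. One is to replace $\Sigma$ by its minimal model, note that the transcendental part has the right type, and invoke the Mukai/Nikulin–Morrison result that a rank-$22$ rational Hodge structure of K3 type with a suitable even lattice embedding is realized by a projective K3 surface. The other is to use surjectivity of the period map together with Hassett's lattice theory, building the required lattice from the intersection form on $H^4(X,\ZZ)$ transported via the isomorphism of atoms. I expect the main obstacle to be the second step together with the matching in step three: making precise that the atom is an invariant carrying the \emph{Hodge structure} and not merely the $F$-bundle. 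This requires showing that the isomorphism of $F$-bundles induced by Iritani's decomposition respects the rational structures (via the $\widehat\Gamma$-integral structure). I would try first to establish this compatibility for Iritani's isomorphism, reducing it to the compatibility of $\widehat\Gamma$-classes with the blow-up formula in $K$-theory.
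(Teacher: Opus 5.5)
Your skeleton is the paper's: weak factorization to $\PP^4$, Iritani's blow-up decomposition, eigenspaces of $c_1\star$, and centers of dimension $\le 2$. The gap is in step 3, and step 4 cannot fill it.

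In step 3 you constrain the absorbing center only by Hodge numbers, and that cannot force a K3. Once points and curves are excluded, every surface with $p_g\ge 1$ survives. The piece of $X$ to be matched is the generalized $0$-eigenspace $\langle H^3-21Q,\,H^4-6QH\rangle\oplus H^4(X,\QQ)_{\mathrm{prim}}$, which has rank $24$, not a rank-$22$ object. For a surface with nef $K$, the corresponding piece is all of $H^*$ of the minimal model, since $\kappa_\tau-f_0$ is nilpotent (Example~\ref{Knef}); it is not a summand of $H^2$. Even exact agreement of dimension and Hodge numbers does not suffice. A minimal properly elliptic surface with $p_g=1$, $q=0$ has $K^2=0$, $b_2=22$, $h^{1,1}=20$, which is exactly the Hodge diamond of a K3.

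The missing idea is a finer invariant of the eigenspace, which Property $\coeur$ encodes. It uses the rank $\gamma$ of $\kappa-\alpha$ on $E_\alpha$ (the Jordan type of $c_1\star$) together with $\nu'$, which detects odd classes. At the explicit non-generic point $\ev_0(T_k)=0$, the cubic has $\gamma=1$, $\nu=1$, $\nu'=0$ at $\alpha=0$. These invariants are additive along the weak factorization once the centers' spectra are shifted apart by the string equation. Since $\PP^4$ has $\nu=0$, some center must carry $\nu=1$, $\nu'=0$, $\gamma\le 1$. The explicit matrix of Example~\ref{Knef} gives $\gamma\ge 2$ whenever $c_1\neq 0$ (the blocks $E$ and $F$), and $\nu'\neq 0$ whenever $q>0$. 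So that center's minimal model has $K\equiv 0$, $q=0$ and $p_g=h^{3,1}(X)=1$, i.e.\ it is a K3. Proposition~\ref{strongsurface} is also needed, because the center need not be minimal and the evaluation point is not generic.

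Step 4 rests on a false principle: a rank-$22$ rational Hodge structure of K3 type is in general not $H^2(S,\QQ)(-1)$ for a projective K3 $S$. For a very general cubic, $H^4(X,\QQ)_{\mathrm{prim}}$ has no Hodge class, while $H^2(S,\QQ)$ always contains an ample class. Even when Hodge classes exist, realizability by a K3 is a genuine arithmetic condition on the transcendental part; compare the discriminant conditions of Hassett and Huybrechts. Nikulin, Morrison, or surjectivity of the period map all need lattice input you have no way to produce. Transporting the form of $H^4(X,\ZZ)$ along a non-isometric identification supplies none.

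The paper never needs such a step. Once the center's minimal model $\Sigma'$ is known to be a K3, the embedding $H^*(\Sigma')\hookrightarrow E^X_{\ev_0,0}$ is an isomorphism because both sides have rank $24$. It intertwines the rank-one nilpotents $\un_{\Sigma'}\mapsto c\,[\pt]$ and $w_3\mapsto w_4$, so its matrix is lower block-triangular. Hence the middle block $H^2(\Sigma')(-1)\to H^4(X)_{\mathrm{prim}}$ is invertible.

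That block is only a number-field combination of $F$-linear morphisms of Hodge structures. The reason is that Iritani's isomorphism is shown to lie in the $K$-span of Hodge morphisms by inspecting its construction from algebraic cycles, not via $\widehat\Gamma$-integral structures. A final polynomial non-vanishing argument then extracts a $\QQ$-linear combination that is an honest isomorphism of rational Hodge structures.

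Your claim that $A_X$ is indecomposable because $h^{3,1}\neq 0$ is also unjustified: a piece with $h^{3,1}\neq 0$ can still split off Tate pieces. The paper sidesteps this by working at one explicit evaluation point.
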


Our proof relies on a specific property of quantum cohomology that remains invariant under blow-ups.
To clarify the distinction between our approach and that of \cite{KKPY}, we denote our property by $\coeur$ and their original property by $\clubsuit$ (see Definition \ref{key}).
Additionally, for the reader's convenience, we recall the relevant material from \cite{KKPY} and provide an outline of their main result.

\begin{rem*}
For simplicity's sake we have chosen not to discuss the exact definition of an `Hodge atom', since it is not directly necessary for our proof (and would have involved a long digression).
We refer the interested reader to \cite{KKPY} for further details.
Nevertheless, the evaluation maps (see Definition \ref{eval}) are closely related to the Hodge atom construction.
For instance, when an evaluation map $\ev$ is taken to be `generic' (specifically when the cardinality of $\Sp^X_\ev$ is maximal), then the generalized eigenspace $E^X_{\ev,\alpha}$ attached to an eigenvalue $\alpha \in \Sp^X_\ev$ (see Definition \ref{Spectrum}) is sometimes called a `coarse (Hodge) atom'.
\end{rem*}

\begin{warn*}
It is not enough to focus only on \textit{generic} evaluation maps. Indeed, we will have to make a distinction in Definition \ref{key} between a Property $\diamondsuit$ of coarse atoms (i.e.~that is only valid on generic evaluation maps) and a stronger Property $\diamondsuit^{\mathrm{strong}}$ that is valid on all evaluation maps.
Specifically, in Corollary \ref{main}, we require Property $\diamondsuit^{\mathrm{strong}}$ to hold for all blow-up centers.
The reader should keep in mind this subtlety, especially for the day when one tries to apply the atom theory to birationality questions in dimension at least five.
\end{warn*}


\noindent
\textbf{Acknowledgement.}
I would like to thank the organizers of the reading group on \cite{KKPY} in Paris, especially Emanuele Macr\`i and Claire Voisin, where this project originated.
I am also grateful to Alessandro Chiodo, Hsueh-Yung Lin, Laurent Manivel, Vladimiro Benedetti, Nicolas Perrin, Michel Brion, and Catriona Maclean for many fruitful discussions and their assistance regarding this work.
Special thanks are due to Hiroshi Iritani for his clarifications on \cite{Iritani} and for his helpful corrections to this manuscript.
Finally, I would like to thank my wife and children for their patience and support during the completion of this work.

\section{Quantum cohomology}
Throughout this paper, we consider only smooth projective complex varieties.
Let $X$ be such a variety.
In this section, we provide basic definitions regarding Hodge structures, followed by a description of Gromov--Witten theory and quantum multiplication.

~~

\subsection{Hodge structures}
\begin{dfn}[Hodge structure]
Let $V_\QQ$ be a $\QQ$-vector space and $n \in \ZZ$.
A \textit{Hodge structure} on $V_\QQ$ of weight $n$ is a decomposition
$$V_\QQ \otimes_\QQ \CC = \bigoplus_{\substack{p,q \in \ZZ \\ p+q=n}} V^{p,q}$$
such that $V^{p,q} = \overline{V^{q,p}}$ under complex conjugation.
A \textit{morphism of Hodge structures} $f \colon V_\QQ \to W_\QQ$ is a morphism of $\QQ$-vector spaces whose complexification satisfies
$$f_\CC\(V^{p,q}\) \subset W^{p,q}.$$
\end{dfn}

\begin{exa*}[Deligne twist]
Denote by $\QQ(1) := V_\QQ = 2 \pi \ci \QQ \subset \CC$ the one-dimensional $\QQ$-vector space with the decomposition
$$V_\QQ \otimes_\QQ \CC = V^{-1,-1}.$$
This is a Hodge structure of weight $-2$ called the \textit{Deligne twist}.
If $W_\QQ$ is another Hodge structure, we use the notation
$$W_\QQ(n) := W_\QQ \otimes_\QQ \QQ(1)^{\otimes_\QQ n}$$
for the \textit{twisted Hodge structure}.
\end{exa*}

\begin{dfn}[Hodge classes and Hochschild degree]
Let $V_\QQ$ be a $\QQ$-vector space equipped with a Hodge structure of weight $n$. The elements of the subspace
$$V^\Hdg := \left\lbrace \begin{array}{ll}
V_\QQ \cap V^{p,p} & \textrm{if $n=2p$ for $p \in \ZZ$,} \\
0 & \textrm{if $n$ is odd,}
\end{array} \right. $$
are called \textit{Hodge classes}.
For any $k \in \ZZ$, we define the subspace of classes of \textit{Hochschild degree} $k$ as
$$V^{(k)} := \bigoplus_{\substack{p,q \in \ZZ \\ p-q=k}} V^{p,q}.$$
\end{dfn}

\begin{exa*}
The rational cohomology $H^n(X,\QQ)$ of $X$ carries a Hodge structure
$$H^n(X,\CC) = \bigoplus_{\substack{p,q \geq 0 \\ p+q=n}} H^{p,q}(X).$$
We denote by $H^*(X)^\Hdg$ (resp.~$H^{(k)}(X)$) the subspace of Hodge classes (resp.~classes of Hochschild degree $k$).
\end{exa*}

\subsection{Formal power series}
We begin with the definition of the Novikov ring.
\begin{dfn}[Novikov ring]\label{Novikov}
Let $\mathrm{NE}_\NN(X) \subset H_2(X,\ZZ)$ denote the monoid generated by classes of effective curves in $X$.
We write $Q^\beta \in \QQ[\mathrm{NE}_\NN(X)]$ for the element corresponding to the curve $\beta \in \mathrm{NE}_\NN(X)$, and define its degree by
$$\deg \(Q^\beta\) := 2 \int_{\beta} c_1(X).$$
Let $\omega$ be an ample class in $X$. We denote by
$$\QQ[[Q]] := \QQ[[\mathrm{NE}_\NN(X)]]$$
the graded completion of $\QQ[\mathrm{NE}_\NN(X)]$ with respect to the descending chain of submodules
$$I_k = \langle Q^\beta ~|~\int_{\beta}\omega \geq k \rangle.$$
This completion is dependent of the choice of $\omega$; we call $\QQ[[Q]]$ the (rational) \textit{Novikov ring} of $X$.
More generally, for any $\ZZ$-graded module $K$, we define $K[[Q]]$ as the graded completion of $K[Q]$.
Specifically, we first consider the submodule $R_n \subset K[Q]$ of polynomials homogeneous of degree $n$ for each $n \in \ZZ$, complete $R_n$ into $\widehat{R}_n$ with respect to the descending chain above, and then take the direct sum
$$K[[Q]] := \bigoplus_{n \in \ZZ} \widehat{R}_n.$$
In particular, this construction yields a graded $K$-algebra.
\end{dfn}

We fix a graded basis $$\mathcal{H}:=\(\alpha_0,\dotsc,\alpha_h\)$$
of the vector space $H^*(X)^{\Hdg}$, with $\alpha_0:=\un_X$, and introduce a formal variable $T_k$ for each $\alpha_k$, with degree defined by
$$\deg\(T_k\):=2-\deg(\alpha_k).$$
Following Definition \ref{Novikov}, we obtain a graded $\QQ$-algebra
$$R^*(X,\QQ) := \QQ[[Q,T_0,\dotsc,T_h]].$$
Explicitly, an element of $R^*(X,\QQ)$ is a formal power series
$$\sum_{\substack{-D \leq d \leq D \\ N \geq 0}} \sum_{\substack{n_0,\dotsc,n_h \geq 0 \\ \beta \in \mathrm{NE}_\NN(X)\\n_0+\dotsc+n_h+\int_{\beta}\omega=N \\ n_0 \deg\(T_0\) + \dotsb + n_h \deg\(T_h\) + \deg\(Q^\beta\) = d}} c_{n_0,\dotsc,n_h,\beta} ~ T_0^{n_0} \dotsm T_h^{n_h} Q^\beta,$$
with $D \geq 0$ and coefficients $c_{n_0,\dotsc,n_h,\beta} \in \QQ$
such that there are only finitely many non-zero coefficients in the second sum, where $\omega$ is any ample class on $X$.

\begin{dfn}\label{formal Hodge}
For any field extension $\QQ \subset K \subset \CC$, any graded $K$-algebra $R^*$, and any graded $K$-vector space $V^*$, we define
$$V^*_R := \bigoplus_{n \in \ZZ} V^n_R ~,\quad V^n_R := \bigoplus_{\substack{p,q \in \ZZ \\ p+2q=n}} V^p \otimes_K R^{2q},$$
where we restrict $R^*$ to $R^\mathrm{even}$, i.e.~to terms of \textit{even degree}.
This is a free $R^\mathrm{even}$-module and a graded $K$-algebra.
Our primary example is $H^*(X,\QQ)_{R(X,\QQ)}$.

Now, let $R^*$ be a graded $\QQ$-algebra and $V^*$ be a graded $\QQ$-vector space such that each $V^p$ carries a Hodge structure.
For each $q \in \ZZ$, we define a Hodge structure on the $\QQ$-vector space $R^{2q}$ by setting $R^{2q} \otimes_\QQ \CC = V^{q,q}$, i.e.~we take the trivial Hodge structure on $R^{2q}$ and twist it by $(-q)$.
Then, for any $n \in \ZZ$, we define a Hodge structure on the $\QQ$-vector space 
$$V^n_R = \bigoplus_{\substack{p,q \in \ZZ \\ p+2q=n}} V^p \otimes_\QQ R^{2q}$$
by tensoring the Hodge structure of $V^p$ with the twist $(-q)$.
In particular, the subspace of Hodge classes is $V^\Hdg_R = V^\Hdg \otimes_\QQ R$, and the subspace of classes with Hochschild degree $k$ is $V^{(k)}_R =V^{(k)} \otimes_\CC R_\CC$.
\end{dfn}

\begin{dfn}[$K$-span of morphisms of Hodge structures]\label{Kspan}
Let $K$ be a field extension $\QQ \subset K \subset \CC$, $R^*$ be a graded $\QQ$-algebra, and $V^*$ and $W^*$ be two graded $\QQ$-vector spaces equipped with Hodge structures in each degree as in Definition \ref{formal Hodge}.
Assume we have a morphism of free $R_K^\mathrm{even}$-modules
$$f \colon V^{*-2k}_{R_K} \to W^*_{R_K}$$
that is homogeneous of some even degree $2k \in \ZZ$ and defined over $K$.
We say that $f$ is in the \textit{$K$-span of morphisms of Hodge structures} if there exist finitely many morphisms of free $R^\mathrm{even}$-modules
$$f_1, \dotsc, f_N \colon V^{*-2k}_R(-k) \to W^*_R$$
which are also morphisms of Hodge structures, such that
$$f = s_1 f_1+\dotsb + s_N f_N$$
for some scalars $s_1, \dotsc, s_N \in K$ that are linearly independent over $\QQ$.
\end{dfn}

\subsection{Quantum product}
We briefly recall several properties of Gromov--Witten invariants.
For any non-negative integer $n$ and any effective curve class $\beta \in \mathrm{NE}_\NN(X)$ such that $\beta \neq 0$ or $n \geq 3$, there exists a proper Deligne--Mumford stack $\cM_{0,n}(X,\beta)$ parametrizing stable maps $f \colon C \to X$ from an $n$-marked genus-zero nodal curve $C$ to $X$ such that $f_*[C] = \beta \in H_2(X,\ZZ)$.
Moreover, there exists an algebraic cycle
$$\left[ \cM_{0,n}(X,\beta)\right]^\vir \in H_{2\vdim}(\cM_{0,n}(X,\beta),\QQ),$$
called the \textit{virtual fundamental cycle}, of pure complex dimension
$$\vdim := \dim_\CC(X)-3+n+\int_{\beta}c_1(X) \in \ZZ.$$
There is also an evaluation map
$$\ev^* \colon H^*(X,\QQ)^{\otimes n} \to H^*(\cM_{0,n}(X,\beta),\QQ).$$
For any cohomology classes $\gamma_1, \dotsc, \gamma_n \in H^*(X,\QQ)$, we define the \textit{correlator}
$$\langle \gamma_1, \dotsc, \gamma_n \rangle_{0,n,\beta}^X := \int_{\left[ \cM_{0,n}(X,\beta)\right]^\vir} \ev^*\(\gamma_1\otimes \dotsm \otimes \gamma_n\) \in \QQ.$$
In particular, if the correlator is non-zero, the \textit{dimension equation} must hold:
$$\sum_{i=1}^n \deg\(\gamma_i\) = 2 \vdim.$$
Correlators are symmetric under permutations of the markings.
Furthermore, we will repeatedly use the following identities, known as the \textit{string and divisor equations}.
Whenever $n \geq 3$ or $\beta \neq 0$:
\begin{eqnarray*}
\langle \gamma_1, \dotsc, \gamma_n, \un_X \rangle_{0,n+1,\beta}^X & = & 0, \\
\langle \gamma_1, \dotsc, \gamma_n, \gamma_{n+1} \rangle_{0,n+1,\beta}^X & = & \int_{\beta} \gamma_{n+1} \cdot \langle \gamma_1, \dotsc, \gamma_n \rangle_{0,n,\beta}^X \quad \textrm{if $\gamma_{n+1} \in H^2(X,\QQ)$.}
\end{eqnarray*}

Next, we define \textit{quantum multiplication}.
To this end, we fix a graded basis
$$\mathcal{B}:=(\phi_0,\dotsc,\phi_m)$$
of the cohomology $H^*(X,\QQ)$, with $\phi_0:=\un_X \in H^0(X,\QQ)$.
We denote by $\(\phi^k\)$ the (dual) basis defined by the condition
$$\int_X \phi_i \cup \phi^k = \delta_{i,k} ~,~~\forall ~0 \leq i,k \leq m.$$

\begin{dfn}[Quantum product]
Let
$$\tau := \sum_{k=0}^h T_k \alpha_k \in H^*(X)^\Hdg[[T_0,\dotsc,T_h]] \subset H^*(X,\QQ)_{R(X,\QQ)}.$$
We define the \textit{quantum product} by
$$\phi_i \star_\tau \phi_j = \sum_{\substack{0 \leq k \leq m, n \geq 0 \\ \beta \in \mathrm{NE}_\NN(X)}} \langle \phi_i,\phi_j,\phi_k,\tau,\dotsc,\tau\rangle_{0,3+n,\beta}^X \cfrac{Q^\beta}{n!} \phi^k \in H^*(X,\QQ) \otimes_\QQ R^*(X,\QQ),$$
using the genus-zero Gromov--Witten invariants of $X$.
\end{dfn}


\begin{rem}[Change of variables]\label{change of variables}
Given a field extension $\QQ \subset K' \subset \CC$, let
$$f\(Q^\beta\) := \mu Q^\beta,~f(T_0),~\dotsc,~f(T_h) \in R^*(X,K')~,~~\textrm{where }\mu \in K',$$
such that the function $f$ is homogeneous of degree $0$, i.e.~we have $\deg(f(u))=\deg(u)$ for any $u \in \left\lbrace Q^\beta, T_0, \dotsc, T_h\right\rbrace$. For any $0 \leq k \leq h$, we write
$$f(T_k) =: \lambda_k + g(T_k),$$
where the formal power series $g(T_k)$ has no constant term and $\lambda_k \in K'$.
By the homogeneity of $f$, note that $\deg\(\alpha_k\) \neq 2 \implies \lambda_k=0$.
We define
$$g \( Q^\beta \) = e^{\sum_{k=0}^h \lambda_k \int_{\beta} \alpha_k} f\(Q^\beta\),$$
which belongs to $R^*(X,K)$ for some field extension $K' \subset K \subset \CC$ containing the value of this exponential.
Since the formal power series
$g\(Q^\beta\), g(T_0), \dotsc, g(T_h)$
have no constant terms, we obtain a well-defined (continuous) morphism
$$g \colon R^*(X,K) \to R^*(X,K)$$
of graded $K$-algebras given by
$$\sum_{\substack{n_0,\dotsc,n_h \geq 0 \\ \beta \in \mathrm{NE}_\NN(X)}} c_{n_0,\dotsc,n_h,\beta} ~ T_0^{n_0} \dotsm T_h^{n_h} Q^\beta \mapsto \sum_{\substack{n_0,\dotsc,n_h \geq 0 \\ \beta \in \mathrm{NE}_\NN(X)}} c_{n_0,\dotsc,n_h,\beta} ~ g(T_0)^{n_0} \dotsm g(T_h)^{n_h} g\(Q^\beta\).$$
\end{rem}

\begin{rem}\label{formal quantum product}
Throughout this paper, we consider
$$\tau := \sum_{k=0}^h f_k \alpha_k \in H^2(X)^\Hdg_{R(X,K')} \subset H^2(X,K')_{R(X,K')},$$
i.e.~homogeneous elements of degree $2$, where $\QQ \subset K' \subset \CC$ is a field extension.
Then the function $f$ mapping $T_k$ to $f_k$ and $Q$ to $Q$ is homogeneous of degree zero as in Remark \ref{change of variables}.
Thus we obtain a morphism of graded $K$-algebras
$$g \colon R^*(X,K) \to R^*(X,K)~,~~T_0 \alpha_0 + \dotsb + T_h \alpha_h \mapsto \tau,$$
for some field extension $K' \subset K \subset \CC$.
Consequently, by the divisor equation, we can extend the quantum product as
$$\phi_i \star_\tau \phi_j := g\(\phi_i \star_{T_0 \alpha_0+\dotsb+T_h \alpha_h} \phi_j\).$$
\end{rem}

\begin{dfn}[Euler vector field]
For any element
$$\tau := \sum_{k=0}^h f_k \alpha_k \in H^2(X)^\Hdg_{R(X,\QQ)},$$
that is homogeneous of degree $2$, we define the \textit{Euler vector field} as
$$\Eu_\tau = c_1(X) + \sum_{k=0}^h \(1-\cfrac{\deg \alpha_k}{2}\) f_k \alpha_k \in H^2(X)^\Hdg_{R(X,\QQ)}.$$
\end{dfn}

\begin{dfn}[Endomorphism]\label{matrix}
For any field extension $\QQ \subset K' \subset \CC$, and any element $\tau \in H^2(X)^\Hdg_{R(X,K')}$, we define
$$\kappa_\tau := \Eu_\tau \star_\tau \in \mathrm{End}_{R(X,K)}\(H^*(X,K)_{R(X,K)}\),$$
where $K$ is a field extension $\QQ \subset K \subset \CC$ containing all exponentials
$e^{\int_{\beta}\tau_2}$ of the integral of the degree-$2$ constant part $\tau_2$ of $\tau$ over any effective curve class $\beta$.
\end{dfn}

\begin{exa}\label{divisor rescaling}
In Definition \ref{matrix}, consider the case where
$$\tau':=\tau + \frac{2 \pi \ci p}{q} u ~,~~\quad \textrm{with }u \in H^2(X,\ZZ)~,~~\frac{p}{q} \in \QQ,$$
such that $\tau \in H^2(X)^\Hdg_{R(X,\QQ)}$
has no constant terms in degree $2$.
By Remark \ref{change of variables}, the endomorphism $\kappa_{\tau'}$ can be defined by extending our scalars to include $q$-th roots of unity.
\end{exa}

\begin{pro}\label{preserves HS}
For any $i \in \ZZ$, the endomorphism $\kappa_{T_0 \alpha_0 + \dotsb + T_h \alpha_h}$ is homogeneous of degree $2$ and is a morphism of Hodge structures
$$H^i(X,\QQ)_{R(X,\QQ)} \to H^{i+2}(X,\QQ)_{R(X,\QQ)}(1).$$
In particular, it stabilizes the subspaces $H^*(X)^{\Hdg}_{R(X,\QQ)}$ and $H^{(k)}(X)_{R(X,\QQ)}$.
\end{pro}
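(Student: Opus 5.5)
Write $\tau=T_0\alpha_0+\dotsb+T_h\alpha_h$. The proof splits into two parts: first the degree count, then the Hodge property, which we check on each summand of $\kappa_\tau$.

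\emph{Degree.} Expand $\kappa_\tau(\phi_i)=\Eu_\tau\star_\tau\phi_i$ as a sum of terms
$$\langle \Eu_\tau,\phi_i,\phi_k,\tau,\dotsc,\tau\rangle_{0,3+n,\beta}^X\,\frac{Q^\beta}{n!}\,\phi^k,$$
with the $T$-variables carried inside $\Eu_\tau$ and $\tau$. Each insertion $T_l\alpha_l$ has total degree $\deg T_l+\deg\alpha_l=2$. The Euler field $\Eu_\tau$ is homogeneous of degree $2$ as well, because $c_1(X)\in H^2$. The dimension equation, together with $\deg Q^\beta=2\int_\beta c_1(X)$ and $\deg\phi^k=2\dim X-\deg\phi_k$, then shows that every nonzero term has degree $\deg\phi_i+2$. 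The count is routine: the $n$ insertions of $\tau$ and the insertion of $\Eu_\tau$ contribute exactly the $2(n+1)$ in the virtual dimension formula.

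\emph{Hodge property.} It suffices to treat each summand separately, with $\Eu_\tau$ replaced by $c_1(X)$ or by $T_l\alpha_l$. Fix $\beta$, $n$ and the classes $\alpha_{l_1},\dotsc,\alpha_{l_n}$ (and $\alpha_l$) occurring in the insertions. The coefficient of the corresponding monomial in the $T$'s and $Q$ is then the $\QQ$-linear map
$$\phi\mapsto \sum_k\langle \gamma,\phi,\phi_k,\alpha_{l_1},\dotsc,\alpha_{l_n}\rangle_{0,3+n,\beta}^X\,\phi^k,\qquad \gamma\in\{c_1(X),\alpha_l\}.$$
This is the action of a correspondence, namely the push-pull along $\ev_2,\ev_3$ of
$$\ev_1^*\gamma\cdot\prod_j\ev_{3+j}^*\alpha_{l_j}\cap[\cM_{0,3+n}(X,\beta)]^\vir .$$
Equivalently, it is induced by the class
$$(\ev_2,\ev_3)_*\Bigl(\ev_1^*\gamma\cdot\prod_j\ev_{3+j}^*\alpha_{l_j}\cap[\cM]^\vir\Bigr)\in H^*(X\times X,\QQ).$$
This class is a Hodge class because all the data involved are algebraic or Hodge: the virtual cycle is an algebraic cycle, the $\alpha_l$ are Hodge classes, and $c_1(X)$ is a Hodge class. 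A Hodge class on $X\times X$ of the appropriate degree acts by Künneth and Poincaré duality as a morphism of Hodge structures $H^i(X)\to H^{i+2-2q}(X)(-q')$ for the appropriate twist.

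The main obstacle is the twist bookkeeping, which must land exactly in the twist $(1)$ together with the convention of Definition \ref{formal Hodge} that $R^{2q}$ carries weight $(-q)$. The plan for this is as follows. The monomial $\prod T_{l_j}Q^\beta$ has degree $2q$, so it lives in $R^{2q}$ with twist $(-q)$. The correspondence maps $H^i$ to $H^{i+2-2q}$, so it shifts Hodge type by $(1-q,1-q)$. Tensoring with the $(-q)$ twist of the coefficient gives the type $(1,1)$ shift from $H^i_R$ to $H^{i+2}_R$, which is precisely what the statement asserts.

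\emph{Consequences.} A morphism of Hodge structures of bidegree $(1,1)$ sends $V^{p,p}\cap V_\QQ$ into Hodge classes, since it is defined over $\QQ$, and it preserves $p-q$. This gives the stabilization of $H^*(X)^\Hdg_{R(X,\QQ)}$ and of $H^{(k)}(X)_{R(X,\QQ)}$, using Definition \ref{formal Hodge} to identify these subspaces.
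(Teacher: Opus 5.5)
Your proposal is correct and follows essentially the same route as the paper. Both arguments do the degree count via the dimension equation, then check term by term that each coefficient map $\phi\mapsto\sum_k\langle\gamma,\phi,\phi_k,\alpha_{l_1},\dotsc\rangle^X_{0,3+n,\beta}\phi^k$ is a morphism of Hodge structures, using that the virtual class is algebraic and the insertions are Hodge classes. Your packaging of that map as a Hodge correspondence on $X\times X$, together with the explicit $(-q)$ twist bookkeeping on $R^{2q}$, spells out details the paper leaves brief; it does not change the argument.
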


\begin{proof}
Consider a non-zero element
$$A:=\langle T_{i_0}\alpha_{i_0}, \phi_j, \phi_k, T_{i_1} \alpha_{i_1}, \dotsc, T_{i_n} \alpha_{i_n} \rangle_{0,3+n,\beta}^X \cfrac{Q^\beta}{n!} \phi^k.$$
By the dimension equation, we have
$$\sum_{k=0}^n \deg\(\alpha_{i_k}\) + \deg\(\phi_j\) + \deg\(\phi_k\) = 2\(\dim_\CC(X)-3+3+n+\int_{\beta}c_1(X)\),$$
which implies
\begin{eqnarray*}
\deg\(A\) & = & \sum_{k=0}^n \deg\( T_{i_k} \) + \deg \(Q^\beta\) + \deg\(\phi^k\) \\
& = & \sum_{k=0}^n \(2-\deg\( \alpha_{i_k} \)\) + 2 \int_{\beta} c_1(X) + 2 \dim_\CC(X) -  \deg\(\phi_k\) \\
& = & \deg\(\phi_j\)+2.
\end{eqnarray*}
Since the element $\kappa_\tau(\phi_j)$ is a formal power series with components of this form, $\kappa_\tau$ is homogeneous of degree $2$.
Furthermore, every constituent in the definition of $\kappa_\tau$ respects Hodge structures; specifically:
\begin{itemize}
\item the virtual cycle defining the correlators is an algebraic cycle of pure dimension,
\item the class $\tau$ and the Euler vector field $\Eu_\tau$ are Hodge classes,
\item the perfect pairing defining the dual basis is compatible with Hodge structures.
\end{itemize}
Hence the linear map
$$u \mapsto \langle \alpha_{i_0},u,\phi_k,\alpha_{i_1},\dotsc,\alpha_{i_n}\rangle_{0,n+3,\beta}^X \phi^k$$
is a morphism of Hodge structures
$$H^i(X,\QQ) \to H^j(X,\QQ)\(\cfrac{i-j}{2}\)~, \quad j:=\deg\(\phi^k\),$$
where the difference $i-j$ is even by the dimension equation.
Therefore, the endomorphism
$$\kappa_\tau \colon H^i(X,\QQ)_{R(X,\QQ)} \to H^{i+2}(X,\QQ)_{R(X,\QQ)}(1)$$
is a morphism of Hodge structures.
In particular, it preserves the Hochschild degree and the subspace of Hodge classes.
\end{proof}

\subsection{Examples}
We compute the endomorphism $\kappa_\tau$ in two important cases: when $X$ is a surface with a nef (numerically effective) canonical class and when $X$ is a cubic fourfold.

\begin{exa}[Surfaces with nef canonical class]\label{Knef}
Let $S$ be a smooth projective surface with $K_S \geq 0$, i.e.~it is numerically effective.
We choose a graded basis $\mathcal{H}=\(\alpha_0, \dotsc, \alpha_h\)$ of $H^*(S)^\Hdg$ such that
\begin{eqnarray*}
\alpha_0 & = & \un_S \in H^0(S,\CC), \\
\alpha_h & = & [\pt] \in H^4(S,\CC).
\end{eqnarray*}
We fix a graded basis $\mathcal{B} = \(\phi_0, \dotsc, \phi_m\)$ of $H^*(S,\QQ)$ with $\phi_0=\un_S$, $\phi_m=[\pt]$, and
$$\phi_i \in H^k(S,\CC) \textrm{ for $N_{k-1}<i \leq N_k$},$$
where $N_0=0$, $N_1 \leq N_2 \leq N_3=m-1$, and $N_4=m$.
Furthermore, we impose the condition
$$\phi_{N_2}=c_1(S), \textrm{ whenever }c_1(S) \neq 0.$$

Fix an element $\tau := f_0 \alpha_0 + \dotsb + f_h \alpha_h \in H^2(X)^\Hdg_{R(X,\QQ)}$.
By the string and divisor equations, we find
\begin{eqnarray*}
\kappa_\tau(\phi_j) & = & f_0 \phi_j \\
& & + \sum_{\beta} \exp \(\int_{\beta}\tau_2\) \(\sum_{k,n} \langle c_1(S),\phi_j,\phi_k,[\pt]^{\otimes n}\rangle_{0,n+3,\beta}^S \cfrac{f_h^n\phi^k}{n!}\) Q^\beta \\
& & - \sum_{\beta} \exp \(\int_{\beta}\tau_2\) \(\sum_{k,n} \langle [\pt],\phi_j,\phi_k,[\pt]^{\otimes n}\rangle_{0,n+3,\beta}^S \cfrac{f_h^{1+n}\phi^k}{n!}\)Q^\beta,
\end{eqnarray*}
where $\tau_2 := f_1 \alpha_1 + \dotsb + f_{h-1} \alpha_{h-1}$ corresponds to the degree-two Hodge classes.
The virtual dimension formula gives
$$\vdim = -1+n+3-K_S \cdot \beta,$$
so that
\begin{eqnarray*}
	\langle c_1(S),\phi_j,\phi_k,[\pt]^{\otimes n}\rangle_{0,n+3,\beta}^S \neq 0 & \implies & -1+n+3-K_S \cdot \beta = 1 + \cfrac{\deg(\phi_j)+\deg(\phi_k)}{2}+2n \\
	& \implies & \cfrac{\deg(\phi_j)+\deg(\phi_k)}{2}+n+K_S \cdot \beta=1, \\
	\langle [\pt],\phi_j,\phi_k,[\pt]^{\otimes n}\rangle_{0,n+3,\beta}^S \neq 0 & \implies & \cfrac{\deg(\phi_j)+\deg(\phi_k)}{2}+n+K_S \cdot \beta=0,
\end{eqnarray*}
resulting in the following possibilities.
For the first correlator, we must have one of the following:
\begin{itemize}
	\item $n=1$, $K_S \cdot \beta=0$, $\phi_j=\phi_k=\un_S$,
	\item $n=0$, $K_S \cdot \beta = 1$, $\phi_j=\phi_k=\un_S$,
	\item $n=0$, $K_S \cdot \beta=0$, $(\phi_j=\un_S,\phi_k \in H^2(S))$ or $(\phi_j \in H^2(S),\phi_k=\un_S)$,
	\item $n=0$, $K_S \cdot \beta=0$, $\phi_j$ and $\phi_k \in H^1(S)$.
\end{itemize}
By the string equation, the first and second cases result in vanishing correlators, and the condition $\beta=0$ must hold in the third case.
Specifically, this yields the contributions
$$\sum_k \langle c_1(S),\phi_j=\un_S,\phi_k\rangle_{0,3,0}^S \phi^k = c_1(S) \quad \textrm{and} \quad \langle c_1(S),\phi_j,\un_S\rangle_{0,3,0}^S [\pt]= \(\int_S c_1(S) \cup \phi_j\) [\pt].$$

For the second correlator, we must have
\begin{itemize}
\item $n=0$, $K_S \cdot \beta = 0$, $\phi_j=\phi_k=\un_S$,
\end{itemize}
and so $\beta=0$ by the string equation, yielding the contribution
$$\langle [\pt],\phi_j=\un_S,\un_S\rangle_{0,3,0}^S f_h [\pt] = f_h [\pt].$$
To sum up, we find
$$\(\kappa_\tau-f_0\)(\phi_j) \left\lbrace \begin{array}{lll}
= & c_1(S) - f_h [\pt] & \textrm{if $\phi_j=\un_S$}, \\
\in & H^3(S,\CC) & \textrm{if $\phi_j \in H^1(S,\CC)$}, \\
= & \(\int_S c_1(S) \cup \phi_j\) [\pt] & \textrm{if $\phi_j \in H^2(S,\CC)$}, \\
= & 0 & \textrm{if $\phi_j \in H^3(S,\CC)$ or if $\phi_j = [\pt]$},
\end{array}
\right. $$
that is, the matrix $\left[ \kappa_\tau - f_0 \right]^{\mathcal{B}}_{\mathcal{B}}$ is of the form
$$
\begin{pmatrix}
0&0&0&0&0\\
0&0&0&0&0\\
E&0&0&0&0\\
0&U&0&0&0\\
-f_h& 0&F &0&0
\end{pmatrix},
$$
where the blocks correspond to cohomology degrees, $U$ is the intersection pairing matrix on odd cohomology,
$$E= \begin{pmatrix}
0\\
\vdots \\
0 \\
\delta_{c_1(S)\neq 0}\\
\end{pmatrix} ~,\quad \textrm{and} \quad
F= \begin{pmatrix}
\int_S c_1(S) \cup \phi_{N_1+1} & \dotsc & \int_S c_1(S) \cup \phi_{N_2} \\
\end{pmatrix}.
$$
In particular, we observe that this matrix is nilpotent.
\end{exa}

\begin{exa}[Givental]\label{Givental}
Let $X$ be a smooth cubic hypersurface in $\PP^5$.
Its Novikov ring consists of polynomials in a single variable $Q$, as $H_2(X,\ZZ)$ is generated by a single class and $Q$ is assigned a non-zero degree.
We choose
$$\tau:=0 \in H^2(X)^\Hdg_{R(X,\QQ)}.$$

The cohomology of $X$ splits as the direct sum of the ambient part (pulled back from $\PP^5$) and the primitive part.
By the divisor equation, the endomorphism $\kappa_\tau$ vanishes on the primitive part.

Following Givental's computation and \cite[Proof of Theorem 6.8]{KKPY}, the endomorphism $\kappa_{\tau=0}$ on the ambient part equals
$$M = 3 \begin{pmatrix}
0 & 0 & 6Q & 0 & 0\\
1 & 0 & 0 & 15Q & 0\\
0 & 1 & 0 & 0 & 6Q\\
0 & 0 & 1 & 0 & 0\\
0 & 0 & 0 & 1 & 0\\
\end{pmatrix},
$$
expressed in the basis $\(1,H,H^2,H^3,H^4\)$, with $H$ the hyperplane class.
Written in the basis $\(1,H,H^2,H^3-21Q,H^4-6QH\)$, it becomes
$$M' := 3 \begin{pmatrix}
0 & 0 & 27Q & 0 & 0\\
1 & 0 & 0 & 0 & 0\\
0 & 1 & 0 & 0 & 0\\
0 & 0 & 1 & 0 & 0\\
0 & 0 & 0 & 1 & 0\\
\end{pmatrix}.
$$
We deduce that $0$ is an eigenvalue with a $1$-dimensional eigenspace, and its generalized eigenspace has dimension $2$, given by
$$\ker\(\kappa_{\tau=0}^m\) = \mathrm{Vect}\(H^3-21Q,H^4-6QH\)~,~~\textrm{for }m \geq 2.$$
Note that if we extend scalars to $\CC[[Q^{1/3}]]$, we obtain three other eigenvalues $9Q^{1/3}, 9e^{2\ci\pi/3}Q^{1/3},9e^{-2\ci\pi/3}Q^{1/3}$.
\end{exa}

\section{Birational transformation}
In this section, we consider a number field $K$ and the (rational) Levi-Civita field
$$F:=\QQ((a^\QQ)),$$
whose elements are series in the formal variable $a$ of the form
$$\sum_{q \in \QQ} x_q a^q~,~~x_q \in \QQ$$
where for any rational number $r$ there are only finitely many non-zero coefficients $x_q$ with $q \leq r$. The absolute value is defined by
$$|\sum_{q \geq p} x_q a^q| := 2^{-p}~,~~x_p \neq 0.$$
Moreover, we declare that every element of $F$ has degree zero.

We introduce an additional formal variable $b$, which we declare to have degree $1$ and absolute value $|b|=1$. We then define the graded $\QQ$-algebra 
$$S^*:=F[b^{\pm 1}] = \bigoplus_{k \in \ZZ} S^k ~,\quad \textrm{with } S^k = F \cdot b^k.$$
We write $S^*_K := S^* \otimes_\QQ K$ for the corresponding graded $K$-algebra.

\begin{lem}\label{conv crit}
We have the equivalence
$$\sum_{n \in \NN} x_n \in F_K \iff x_n \to 0,$$
for any sequence $\(x_n\)_{n \in \NN} \in \(F_K\)^\NN$.
\end{lem}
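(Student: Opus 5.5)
We interpret $\sum_{n} x_n \in F_K$ as meaning that the partial sums $s_N := \sum_{n \leq N} x_n$ converge in $F_K$. The plan is to reduce to the field $F$ itself and then use two facts: the absolute value on $F$ is non-archimedean, and $F$ is complete.

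\emph{Reduction to $F$.} Fix a $\QQ$-basis $e_1,\dotsc,e_d$ of the number field $K$. Then $F_K = F \otimes_\QQ K = \bigoplus_{i} F e_i$. We give $F_K$ the topology of the norm $|\sum_i y_i e_i| := \max_i |y_i|$, which is the natural product topology. With this norm, a sequence in $F_K$ converges (resp.\ tends to $0$, resp.\ is Cauchy) if and only if each of its $d$ coordinate sequences in $F$ does. A series in $F_K$ converges if and only if each coordinate series converges. So it suffices to prove the equivalence for sequences in $F$.

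\emph{The two implications in $F$.} First note that $|\cdot|$ is ultrametric: $|x+y| \leq \max(|x|,|y|)$, since the lowest exponent of $x+y$ is at least the minimum of the lowest exponents of $x$ and $y$. For the direct implication, if $s_N \to s$ then $x_N = s_N - s_{N-1} \to s - s = 0$. For the converse, if $x_n \to 0$ then for $M > N$ the ultrametric inequality gives
$$|s_M - s_N| \leq \max_{N < n \leq M} |x_n|,$$
which tends to $0$ as $N \to \infty$. Hence $(s_N)$ is Cauchy, and it remains only to show that $F$ is complete.

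\emph{Completeness of $F$ (the main step).} Let $(s_N)$ be a Cauchy sequence in $F$. For each $r \in \QQ$, choose $N(r)$ such that $|s_M - s_N| < 2^{-r}$ for all $M,N \geq N(r)$. This means that the coefficients of $a^q$ with $q \leq r$ in $s_M$ and $s_N$ agree. We define $s := \sum_q c_q a^q$, where $c_q$ is the eventually constant coefficient of $a^q$ in $s_N$.

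We must check that $s \in F$, i.e.\ that it satisfies the left-finiteness condition. For fixed $r$, the coefficients $c_q$ with $q \leq r$ coincide with those of the single element $s_{N(r)} \in F$, so only finitely many of them are non-zero.

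Finally, $s_N \to s$. Indeed, for $N \geq N(r)$, the series $s_N$ and $s$ agree in all exponents $q \leq r$, so $|s_N - s| < 2^{-r}$.

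The only delicate point is this last step: making the coefficientwise limit precise and verifying the finiteness condition defining $F$. Everything else is formal.
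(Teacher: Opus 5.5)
Your proof is correct. It uses the same mechanism as the paper (below any fixed exponent, the coefficients eventually stop changing), but organizes it differently.

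The paper works directly with $K$-coefficients. It writes $x_n=\sum_{q\ge w_n}y_{n,q}a^q$ with $w_n\to\infty$ and observes that the coefficient of each $a^q$ in $\sum_n x_n$ is a finite sum $z_q$. It also notes that only finitely many $z_q$ with $q\le Q$ are non-zero, so the sum exists in $F_K$. The converse is dismissed as holding in any metric space.

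You instead factor the argument through two standard facts: the ultrametric Cauchy criterion and the completeness of the Levi-Civita field, which you prove by coefficientwise stabilization.

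Your route has two advantages:
\begin{itemize}
\item It explicitly shows that the partial sums converge to the constructed element. The paper only shows that the formal coefficientwise sum lies in $F_K$, leaving convergence of partial sums implicit, though it is immediate from $w_n\to\infty$.
\item Your reading of ``$\sum_n x_n\in F_K$'' as convergence of partial sums is the one under which the paper's one-line converse is valid. Under a purely formal reading it would fail: $x_n=a^{1/n}-a^{1/(n+1)}$ has formal sum $a$, yet $|x_n|=2^{-1/(n+1)}\to 1$.
\end{itemize}

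On the other hand, your reduction from $F_K$ to $F$ via a $\QQ$-basis of $K$ is unnecessary. Your max-norm coincides exactly with the lowest-exponent absolute value on $F_K=K((a^{\QQ}))$, and your completeness argument runs verbatim with coefficients in $K$, which is how the paper proceeds.
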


\begin{proof}
If $x_n \to 0$, we can write
$$\forall n \in \NN ~,~~x_n = \sum_{q \geq w_n} y_{n,q} a^q~,~~y_{n,q} \in K,$$
where the sequence $\(w_n\)_{n \in \NN} \in \QQ^\NN$ tends to infinity.
This means that for any $Q \in \QQ$, there exists $N \in \NN$ such that for all $n > N$, we have $w_n > Q$.
It follows that
$$\sum_{\substack{n \in \NN \\ w_n \leq q \leq Q}} y_{n,q} a^q = \sum_{q \leq Q} z_q a^q~,~~\textrm{where }z_q := \sum_{\substack{0 \leq n \leq N \\ w_n \leq q}} y_{n,q}.$$
By definition of $F_K$, the sum defining $z_q$ is finite and is non-empty for only finitely many values of $q$.
Hence, the element $\sum_{n \in \NN} x_n$ is well-defined in $F_K$.
The converse implication holds in any metric space.
\end{proof}

\subsection{Evaluation maps}
Let $f \colon X \to Z$ be a morphism of varieties.
We denote the Novikov variable of $Z$ by $Q'$ and an ample class in $Z$ by  $\omega$.
We denote by $T_0, \dotsc, T_h$ the formal variables associated with a graded basis $\mathcal{H}$ of $H^*(X)^\Hdg$.
Introducing an additional formal variable $\mathfrak{q}'$ of degree in $\left\lbrace 1,2\right\rbrace$ (to be specified later), we define the graded $\QQ$-algebra $R^*_f$ as
$$R^*_f := \QQ[{\mathfrak{q}'}^{\pm 1}][[Q',T_0, \dotsc, T_h]].$$
Note that we have a natural inclusion $R^*(X,K) \subset R^*_{\id_X,K}$ when the morphism is the identity $\id_X \colon X \to X$.

\begin{rem*}
The morphism $f \colon X \to Z$ induces a morphism of Novikov rings $\QQ[[Q]] \to \QQ[[Q']]$. It is important to note that this morphism is not generally homogeneous of degree zero, as will be seen in the case of the blow-up map $\widetilde{X} \to X$ in the next section.
To correct it, we make the assumption below.
\end{rem*}

\begin{assumption}\label{assump}
Throughout this paper, we assume that the morphism $f \colon X \to Z$ is equipped with a degree-zero morphism $J \colon \QQ[[Q]] \to \QQ[{\mathfrak{q}'}^{\pm 1}][[Q']]$ of the form
$$Q^\beta \mapsto {Q'}^{j(\beta)} {\mathfrak{q}'}^{j_\beta}.$$
In particular, this induces a degree-zero homogeneous morphism
$$R^*(X,K) \to R^*_{f,K}$$
between the corresponding graded $K$-algebras.
\end{assumption}

\begin{rem}
If $J(Q^\beta)=1$ for some $\beta \neq 0$, then $Q^\beta$ is homogeneous of degree zero.
However, in this case, the morphism $J$ is not well-defined on the formal power series $\sum_{N \geq 0} Q^{N \beta} \in \QQ[[Q]]$, as the image would involve an infinite sum of constant terms, leading to a contradiction.
Consequently, if $\beta \neq 0$ and $j(\beta)=0$, it must follow that $j_\beta \neq 0$. 
\end{rem}

\begin{exa}
We have three main examples for the morphism $f \colon X \to Z$, namely:
\begin{itemize}
\item the identity map $\id_X \colon X \to X$,
\item the blow-up map $\mathrm{pr} \colon \mathrm{Bl}_{X'}(X) \to X$ of a smooth center $X' \subset X$,
\item the inclusion map of the smooth center $X' \subset X$.
\end{itemize}
We will see below in Section \ref{sect Bl} that they all satisfy Assumption \ref{assump}.
\end{exa}

\begin{dfn}[Evaluation function]\label{eval}\label{rescale}
Let
$$\ev \colon \left\lbrace \mathfrak{q}', {Q'}^\beta,T_0, \dotsc, T_h\right\rbrace  \to S^*_K$$
be a degree-zero homogeneous function that is multiplicative in $\beta \in \mathrm{NE}_\NN(Z)$.	
We say that $\ev$ is a \textit{normalized $K$-evaluation function} for the morphism $X \to Z$ if
\begin{itemize}
\item we have $\ev(\mathfrak{q}')=b^{\deg(\mathfrak{q}')}$,
\item there exists $0<\epsilon<1$ such that for any non-zero curve class $\beta \in \mathrm{NE}_\NN(Z)$, we have
$$0 \leq |\ev({Q'}^\beta)|<\epsilon^{\int_{\beta} \omega} < 1,$$
\item for each $0 \leq k \leq h$, we have
$$0 \leq |\ev(T_k)| <1.$$
\end{itemize}

For any $\lambda \in F_K$ and any function $\ev$, we define the function $\lambda\ev$ by
$$\forall ~x \in \left\lbrace \mathfrak{q}', {Q'}^\beta,T_0, \dotsc, T_h\right\rbrace~,~~\lambda\ev(x) := \lambda^{\deg(x)}\ev(x).$$
We say that $\ev$ is a \textit{$K$-evaluation function} for the morphism $X \to Z$ if there exists $\lambda \in F_K$ such that $\lambda\ev$ is a normalized $K$-evaluation function.
\end{dfn}

\begin{pro}[{\cite[Lemma 4.6]{KKPY}}]\label{conv}
Every $K$-evaluation function for the morphism $f \colon X \to Z$ can be extended uniquely and continuously to a degree-zero homogeneous morphism of graded $K$-algebras
$$R^*_{f,K} \to S^*_K,$$
that we call a $K$-evaluation map.
Conversely, every degree-zero homogeneous morphism $\ev \colon R^*_{f,K} \to S^*_K$ of graded $K$-algebras satisfies:
$$\forall ~x \in \left\lbrace \mathfrak{q}', {Q'}^\beta,T_0, \dotsc, T_h\right\rbrace~,~~ 0 \leq |\ev(x)| < |\ev(\mathfrak{q'})|^{\frac{\deg(x)}{\deg(\mathfrak{q'})}}.$$
\end{pro}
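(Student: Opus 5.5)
We prove the two directions separately.

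\emph{Extension.} Let $\ev$ be a $K$-evaluation function, and choose $\lambda \in F_K$ such that $\lambda\ev$ is normalized. First reduce to the normalized case. The rescaling $x \mapsto \lambda^{\deg(x)}x$ defines a degree-preserving automorphism $\rho_\lambda$ of $S^*_K$: it acts on $S^k_K = F_K b^k$ by multiplication by $\lambda^k$. We have $\lambda\ev = \rho_\lambda \circ \ev$ on generators, so an extension of $\lambda\ev$ composed with $\rho_\lambda^{-1}$ gives an extension of $\ev$. Uniqueness and continuity transfer the same way.

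Now assume $\ev$ is normalized. On the Laurent polynomial part $K[{\mathfrak{q}'}^{\pm 1}][Q',T_0,\dotsc,T_h]$ the map is forced by multiplicativity, using $\ev(\mathfrak{q}')=b^{\deg \mathfrak{q}'}$, which is invertible. It is degree-zero because $\ev$ is degree-zero on generators. An element of $R^d_{f,K}$ is a convergent sum of monomials $c\, {\mathfrak{q}'}^{m} {Q'}^\beta T_0^{n_0}\dotsm T_h^{n_h}$, and the exponent $m$ is determined by $d$, $\beta$ and the $n_i$. Its image is
$$c\, b^{m\deg\mathfrak{q}'}\ev({Q'}^\beta)\prod_k \ev(T_k)^{n_k}.$$
Writing this as (element of $F_K$)$\cdot b^d$, the $F_K$-coefficient has absolute value at most $\epsilon^{\int_\beta\omega}\,\eta^{n_0+\dotsb+n_h}$, where $\eta:=\max_k|\ev(T_k)|<1$. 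The absolute value is non-archimedean, so this bound holds for any finite combination.

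Within a fixed degree, the completion is with respect to $N = n_0+\dotsb+n_h+\int_\beta\omega$, and only finitely many monomials occur for each $N$. Hence the terms tend to $0$ in $F_K$, and Lemma \ref{conv crit} shows that the series converges in $F_K\cdot b^d$. This defines the extension degreewise. It is additive and multiplicative, because products of convergent series in a complete non-archimedean field may be rearranged; this is again justified by Lemma \ref{conv crit}. It is continuous, since the estimate shows that $I_k$ maps into a ball of radius $\max(\epsilon,\eta)^k$. Uniqueness follows from continuity and the density of polynomials.

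\emph{Converse.} Let $\ev$ be any degree-zero morphism of graded $K$-algebras. Then $\ev(\mathfrak{q}')$ is a unit, so $\ev(\mathfrak{q}')=\mu b^{\deg\mathfrak{q}'}$ with $\mu\neq 0$. Set $\lambda:=\mu^{-1/\deg\mathfrak{q}'}$; this lies in $F_K$ because $\deg \mathfrak{q}'\in\{1,2\}$, possibly after extending $K$ by a square root of the leading coefficient. Rescaling by $\lambda$, we may assume $\ev(\mathfrak{q}')=b^{\deg\mathfrak{q}'}$, and the claim becomes $|\ev(x)|<1$ for every other generator $x$. This rescaling is the main obstacle: if the square root is unavailable, we instead argue directly with $|\ev(x)|^{\deg\mathfrak{q}'}$ against $|\ev(\mathfrak{q}')|^{\deg x}$, using the quotient $y:=x^{\deg\mathfrak{q}'}{\mathfrak{q}'}^{-\deg x}$, which has degree zero.

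So let $y$ be this degree-zero element built from a generator $x$. Since $y$ has no constant term in the completion, the series $\sum_{n\geq 0} y^n$ lies in $R^0_{f,K}$. Its image under $\ev$ is $\sum_n \ev(y)^n$ in $F_K$. We need this to follow from the map itself rather than from an assumed continuity, so we use a different argument. The element $1-y$ is invertible in $R^0_{f,K}$, and moreover $1-cy$ is invertible for every $c\in K$. Hence $1-c\,\ev(y)$ is never zero, which gives no information on its own. Instead, observe that $y^n$ divides the elements of the $n$-th filtration piece, and that $y$ admits $p$-th roots $(1+y)^{1/p}$ by binomial series. Writing $\ev(y)=\nu a^{r}(1+o(1))$ with $\nu \in K$, we show $r>0$, i.e.\ $|\ev(y)|<1$. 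For $|\ev(y)|\ge 1$, the element $\nu^{-1}a^{-r}\ev(1+y)$ would need compatible roots of every order, while $1-\nu^{-1}y$, which is invertible, would map to an element whose leading term cancels. Iterating this cancellation against the invertibility of $1-cy$ for all $c$ forces a contradiction. Carrying out this last step carefully is where I expect the real work. Finally, the inequality $|\ev(y)|<1$ rewrites as the stated bound $|\ev(x)|<|\ev(\mathfrak{q}')|^{\deg x/\deg\mathfrak{q}'}$. The case $\ev(x)=0$ corresponds to the allowed equality $0=|\ev(x)|$.
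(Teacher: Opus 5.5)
Your extension argument is correct and is essentially the paper's: normalize via the rescaling automorphism of $S^*_K$, bound each monomial by $\max(\epsilon,\eta)^N$, and apply Lemma \ref{conv crit}. Working in a fixed degree, where the $\mathfrak{q}'$-exponent is determined, is cleaner than the paper's indexing by $\int_\beta\omega+|\underline{n}|+k$. Your device $y=x^{\deg\mathfrak{q}'}{\mathfrak{q}'}^{-\deg x}$ is also a genuine improvement. It avoids the square root that the paper's normalization $\lambda=\ev(\mathfrak{q}')^{-1/\deg(\mathfrak{q}')}b$ tacitly needs when $\deg\mathfrak{q}'=2$. (The inequality must of course be read for $x\neq\mathfrak{q}'$, as you do.)

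The converse, however, is not proved. The paper applies $\ev$ termwise to $\sum_N y^N\in R^0_{f,K}$ and observes that $\sum_N\ev(y)^N$ diverges when $|\ev(y)|\geq 1$. That step tacitly uses continuity of $\ev$. You rightly decline to use it, but your replacement is explicitly unfinished, and the mechanism you outline cannot succeed on its own. Invertibility of $1-cy$ for $c\in K$, and of any unit built polynomially or rationally from $y$, only shows that $\ev(y)$ is $0$ or transcendental over $K$. Indeed, $y\mapsto a^{-1}$ defines a $K$-algebra morphism $K[y]_{(y)}\to F_K$ with $|a^{-1}|=2$, and $K[y]_{(y)}\subset K[[y]]\subset R^0_{f,K}$. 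So no ``iterated cancellation'' inside $K[y]_{(y)}$ can produce a contradiction. The root idea points the right way, but you never extract a contradiction from it. For instance, $\ev(1+y)$ having roots of all orders is perfectly consistent when its leading coefficient is $1$.

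The missing ingredient is to combine genuinely non-rational power series in $y$ with the arithmetic of the number field $K$. For $c\in K$ and $n\geq 1$, the binomial series gives $(1+cy)^{1/n}\in K[[y]]\subset R^0_{f,K}$. Hence $1+c\,\ev(y)$ is an $n$-th power in $F_K$, so its leading coefficient is an $n$-th power in $K$. Suppose $\ev(y)=\nu a^r(1+o(1))$ with $\nu\in K^\times$ and $r\leq 0$, and take $c=2/\nu$. The leading coefficient of $1+c\,\ev(y)$ is then $2$ if $r<0$ and $3$ if $r=0$. For a prime $n>[K:\QQ]$, neither is an $n$-th power in $K$, since $x^n-2$ and $x^n-3$ are Eisenstein. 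This contradiction yields $|\ev(y)|<1$. Alternatively, add continuity of $\ev$ as a hypothesis, as the paper implicitly does; the geometric-series argument then suffices.
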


\begin{proof}
By homogeneity, the statement holds if and only if it holds for all \textit{normalized} $K$-evaluation functions, so that without any loss of generality, we may assume $|\ev(\mathfrak{q}')|=1$ and apply the criterium of Lemma \ref{conv crit} to prove that the element
$$\sum_{N \in \ZZ} \sum_{\substack{k \in \ZZ \\ \underline{n} \in \NN^{1+h} \\ \beta \in \mathrm{NE}_\NN(Z) \\ \int_{\beta}\omega + |\underline{n}|+k=N}} c_{\beta, \underline{n}, k} \ev\({Q'}^\beta T_0^{n_0} \dotsc T_h^{n_h} {\mathfrak{q}'}^k\)$$
is well-defined in $F_K$, where we adopt the notation $\underline{n}:=\(n_0, \dotsc, n_{h+l}\) \in \NN^{1+h+l}$ and $|\underline{n}|:=n_0+\dotsb+n_{h+l}$.
We examine the limit as $N \to \pm \infty$ of
\begin{eqnarray*}
|A_N| & := & |\sum_{\substack{k \in \ZZ \\ \beta \in \mathrm{NE}_\NN(Z) \\ \int_{\beta}\omega + |\underline{n}|+k=N}} c_{\beta, \underline{n}, k}~ \ev\({Q'}^\beta T_0^{n_0} \dotsc T_h^{n_h} {\mathfrak{q}'}^k\) | \\
& \leq & \max \left\lbrace |\ev\({Q'}^\beta\)| \cdot |\ev\(T_0\)|^{n_0} \dotsc |\ev\(T_h\)|^{n_h} \cdot  |\ev\({\mathfrak{q}'}^k\)| \right\rbrace \\
& \leq & \max \left\lbrace \epsilon^{\int_\beta \omega+|\underline{n}|} \right\rbrace,
\end{eqnarray*}
where the maximum is taken over the finite set of all
$$k \in \ZZ ~,~~ \underline{n} \in \NN^{1+h}~,~~ \beta \in \mathrm{NE}_\NN(Z),$$
such that $\int_{\beta}\omega + |\underline{n}|+k=N$ and $c_{\beta, \underline{n},k} \neq 0$.
By the definition of a graded algebra, there exists $D>0$ such that
$$c_{\beta, \underline{n}, k} \neq 0 \implies -D \leq k \deg\(\mathfrak{q}'\) + 2\int_{\beta}c_1(Z)+ \sum_{i=0}^h n_i \deg\(T_i\) \leq D.$$
This implies the following bounds:
\begin{eqnarray*}
N \leq & \frac{D}{\deg\(\mathfrak{q}'\)}+ \(1-\frac{2\min\(-1,\frac{\int_{\beta_1} c_1(Z)}{\int_{\beta_1}\omega}, \dotsc, \frac{\int_{\beta_l} c_1(Z)}{\int_{\beta_l}\omega}\)}{\deg\(\mathfrak{q}'\)}\) \(\int_\beta \omega+|\underline{n}|\), \\
-N \leq & \frac{D}{\deg\(\mathfrak{q}'\)}+ \(\frac{2\max\(1, \frac{\int_{\beta_1} c_1(Z)}{\int_{\beta_1}\omega}, \dotsc, \frac{\int_{\beta_l} c_1(Z)}{\int_{\beta_l}\omega}\)}{\deg\(\mathfrak{q}'\)}-1\) \(\int_\beta \omega+|\underline{n}|\), \\
\end{eqnarray*}
where $\(\beta_1, \dotsc, \beta_l\)$ is the dual basis to a basis of ample classes $\(\omega_1,\dotsc, \omega_l\)$ for the torsion-free part of the N\'eron--Severi group of $Z$, so that every effective curve is a linear combination of $\beta_1, \dotsc, \beta_l$ with non-negative integer coefficients.
Consequently, we obtain $A_N \to 0$ as $N \to \pm \infty$.

Conversely, let $\ev \colon R^*_{f,K} \to S^*_K$ be a degree-zero homogeneous morphism of graded $K$-algebras.
In particular, since $\ev(\mathfrak{q}') \neq 0$, we may assume that $\ev(\mathfrak{q}')=b^{\deg(\mathfrak{q}')}$ after potentially rescaling $\ev$ by the factor $\lambda:=\ev(\mathfrak{q}')^{-\frac{1}{\deg(\mathfrak{q}')}}b$.

Suppose there exists $x \in \left\lbrace {Q'}^\beta,T_0, \dotsc, T_h\right\rbrace$ such that $|\ev(x)|\geq 1$.
Then the element $y:=x {\mathfrak{q}'}^{-\frac{\deg(x)}{\deg(\mathfrak{q}')}}$ is homogeneous of degree $0$.
It follows that the formal power series $\sum_{N \geq 0} y^N$ is well-defined in $R^*_Z$, whereas its image $\sum_{N \geq 0} \ev(y)^N$ is not well-defined in $S^*_K$ by Lemma \ref{conv crit}. This yields a contradiction.
\end{proof}

\begin{pro}[Preservation of Hodge structures]\label{span HS}
Let $\ev \colon R^*_{f,K} \to S^*_K$ be a $K$-evaluation function, and let $\(s_1,\dotsc, s_N\)$ be a $\QQ$-basis of $K$, where $N:=\dim_\QQ K$.
We set $\tau := T_0 \alpha_0 + \dotsb + T_h \alpha_h$.

Then the endomorphism $\ev\(\kappa_\tau\)$ is well-defined and lies in the $K$-linear span of morphisms of Hodge structures (see Definition \ref{Kspan}).
Specifically, for any $i \in \ZZ$, there exist morphisms of free $S^\mathrm{even}$-modules and of Hodge structures
$$\kappa_1, \dotsc, \kappa_N \colon H^i(X,\QQ)_S \to H^{i+2}(X,\QQ)_S(1)$$
such that $\ev\(\kappa_\tau\) = s_1 \kappa_1+ \dotsb + s_N \kappa_N$.
In particular, $\ev\(\kappa_\tau\)$ stabilizes the subspaces $H^*(X)^{\Hdg}_S$ and $H^{(k)}(X)_{S_\CC}$.
\end{pro}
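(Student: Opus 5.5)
The plan is to combine Proposition \ref{preserves HS} with Proposition \ref{conv}, decomposing the evaluation map along a $\QQ$-basis of $K$.

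First, well-definedness. By Proposition \ref{preserves HS}, $\kappa_\tau$ has matrix entries in $R^*(X,\QQ)$ with respect to the basis $\mathcal{B}$, and each entry is homogeneous of the appropriate degree. Through the degree-zero morphism of Assumption \ref{assump}, these entries map into $R^*_f$. Proposition \ref{conv} extends $\ev$ uniquely and continuously to a degree-zero homogeneous morphism $R^*_{f,K}\to S^*_K$. Applying it entrywise therefore gives a well-defined homogeneous endomorphism $\ev(\kappa_\tau)$ of degree $2$ on $H^*(X,K)_{S_K}$.

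Second, the decomposition. Write the matrix entries of $\kappa_\tau$ as convergent sums. Each monomial $c\, T_0^{n_0}\dotsm T_h^{n_h}Q^\beta$ has coefficient $c\in\QQ$ times a Gromov--Witten contribution, and it maps to $c\,\ev(\cdots)\in S^*_K=S^*\otimes_\QQ K$. Since $F_K=F\otimes_\QQ K$ (as $K$ is a number field, hence finite over $\QQ$), each value $\ev(x)\in S^k_K$ decomposes uniquely as $\sum_{l=1}^N s_l\, e_l(x)$ with $e_l(x)\in S^k$. The projections $\pi_l\colon S^*_K\to S^*$ onto the $s_l$-components are $\QQ$-linear and homogeneous of degree zero. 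They are also continuous, because the absolute value on $F_K$ dominates the absolute values of the coordinates in the basis $(s_l)$. Set $\kappa_l:=\pi_l\circ\ev(\kappa_\tau)$, applied entrywise; then $\ev(\kappa_\tau)=\sum_l s_l\kappa_l$. The crucial point is that each $\kappa_l$ is a limit of $\QQ$-linear combinations of the building blocks
$$u\mapsto \langle \alpha_{i_0},u,\phi_k,\alpha_{i_1},\dotsc,\alpha_{i_n}\rangle_{0,n+3,\beta}^X\,\phi^k,$$
with coefficients in $S^{2q}$. The proof of Proposition \ref{preserves HS} shows these are morphisms of Hodge structures $H^i(X,\QQ)\to H^j(X,\QQ)(\frac{i-j}{2})$. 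Tensoring with $S^{2q}$, which carries the twisted trivial Hodge structure of Definition \ref{formal Hodge}, the degree bookkeeping of Proposition \ref{preserves HS} shows that each term is a morphism $H^i(X,\QQ)_S\to H^{i+2}(X,\QQ)_S(1)$. A $\QQ$-linear combination of morphisms of Hodge structures is again one.

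The main obstacle is passing to the limit. I would argue that, for fixed source and target basis vectors, the coefficient of $\phi^k$ in each $\kappa_l(\phi_j)$ is a single element of $S^{\deg\phi_j+2-\deg\phi^k}$. Hence $\kappa_l$ is determined by finitely many $\QQ$-linear maps $H^i(X,\QQ)\to H^{j}(X,\QQ)$, each tensored with a fixed element of $S^{2q}$. Grouping the infinite sum by the pair $(j,\deg\phi^k)$, each group is a convergent sum in $S^{2q}$ of $\QQ$-multiples of the Hodge morphisms above. There are only finitely many such morphisms for each $(i,j)$, since $\mathrm{Hom}$ is finite-dimensional. So I would choose a finite $\QQ$-basis $g_1,\dots,g_r$ of the span of these correlator maps; each such basis element is itself a morphism of Hodge structures, namely a rational combination of the finitely many spanning ones. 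Then $\kappa_l=\sum_t g_t\otimes\sigma_t$ with $\sigma_t\in S^{2q}$, which exhibits $\kappa_l$ as a morphism of Hodge structures.

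Finally, the stabilization claims follow as in Proposition \ref{preserves HS}. Morphisms of Hodge structures with the twist $(1)$ preserve Hochschild degree and Hodge classes, and this is stable under $K$-linear combination after complexification.
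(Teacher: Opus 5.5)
Your proposal is correct and follows the same route as the paper. You get well-definedness from Proposition \ref{conv} applied entrywise, split the evaluated coefficients along the $\QQ$-basis $(s_1,\dotsc,s_N)$ of $K$, and identify each component as a combination of correlator maps, which are morphisms of Hodge structures by the proof of Proposition \ref{preserves HS}. Your reduction to a finite $\QQ$-basis $g_1,\dotsc,g_r$ of the span of correlator maps makes explicit the passage to the limit that the paper leaves implicit; it works because $\mathrm{span}_\QQ(g_1,\dotsc,g_r)\otimes_\QQ F$ is cut out by linear equations with rational coefficients, hence closed, so the limit $\kappa_l$ does lie in it.
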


\begin{proof}
By Proposition \ref{conv}, we have a well-defined degree-zero homogeneous morphism $\ev \colon R^*_{f,K} \to S^*_K$ of graded $K$-algebras and thus a morphism
$$\ev \colon R^*(X,K) \to R^*_{f,K} \to S^*_K.$$
Recall that $\kappa_\tau$ is an endomorphism of $H^*(X,\QQ)_{R(X,\QQ)}$.
We define an endomorphism $\ev(\kappa_\tau) \in \mathrm{End}_{S^*_K}(H^*(X,K)_{S_K})$ as follows: choose a basis $\mathcal{B}$ of $H^*(X,K)$ and define $[\ev(\kappa_\tau)]^\mathcal{B}_\mathcal{B}:=\ev\([\kappa_\tau]^\mathcal{B}_\mathcal{B}\)$.
Note that it is independant of the choice of the basis $\mathcal{B}$ of $H^*(X,K)$, because we have $\ev(P)=P$ for any base change matrix $P \in \mathrm{Mat}(K)$.

Every element $\zeta \in F_K$ decomposes uniquely as a sum
$$\zeta = s_1 \zeta_1 + \dotsb + s_N \zeta_N,$$
where $\zeta_1, \dotsc, \zeta_N \in F$.
Let $\ev\(\tau\) = g_0 \alpha_0 + \dotsb + g_h \alpha_h$, where $g_k \in S^{\deg\(T_k\)}_K$ for each $0 \leq k \leq h$.
We compute
\begin{eqnarray*}
\ev\(\kappa_\tau\)(\phi_i) & = & \sum_{\substack{0 \leq k \leq m, n \geq 0 \\ \beta \in \mathrm{NE}_\NN(X)}} \langle c_1(X) + \sum_{l=0}^h \(1-\cfrac{\deg \alpha_l}{2}\) g_l \alpha_l,\phi_i,\phi_k,\(g_0 \alpha_0 + \dotsb + g_h \alpha_h\)^{\otimes n}\rangle_{0,3+n,\beta}^X \cfrac{\ev\(Q^\beta\)}{n!} \phi^k \\
& = & \sum_{\substack{0 \leq k \leq m \\ n_0, \dotsc, n_h \geq 0 \\ n_0 + \dotsb + n_h = n \\ \beta \in \mathrm{NE}_\NN(X)}} g_0^{n_0} \dotsm g_h^{n_h} \cfrac{\langle c_1(X),\phi_i,\phi_k, \alpha_0^{\otimes n_0},\dotsc,\alpha_h^{\otimes n_h}\rangle_{0,3+n,\beta}^X}{n_0! \dotsm n_h!} \ev\(Q^\beta\) \phi^k \\
&  & + \sum_{\substack{0 \leq k \leq m \\ 0 \leq l \leq h \\ n_0, \dotsc, n_h \geq 0 \\ n_0 + \dotsb + n_h = n \\ \beta \in \mathrm{NE}_\NN(X)}} \(1-\cfrac{\deg \alpha_l}{2}\) g_l g_0^{n_0} \dotsm g_h^{n_h} \cfrac{\langle \alpha_l,\phi_i,\phi_k, \alpha_0^{\otimes n_0},\dotsc,\alpha_h^{\otimes n_h}\rangle_{0,3+n,\beta}^X}{n_0! \dotsm n_h!} \ev\(Q^\beta\) \phi^k \\
& = & \sum_{j=1}^N s_j \left(  \sum_{\substack{0 \leq k \leq m \\ n_0, \dotsc, n_h \geq 0 \\ n_0 + \dotsb + n_h = n \\ \beta \in \mathrm{NE}_\NN(X)}} h^{(j)}_{n_0,\dotsc, n_h, \beta} \cfrac{\langle c_1(X),\phi_i,\phi_k, \alpha_0^{\otimes n_0},\dotsc,\alpha_h^{\otimes n_h}\rangle_{0,3+n,\beta}^X}{n_0! \dotsm n_h!} \phi^k \right.  \\
&  & \left. + \sum_{\substack{0 \leq k \leq m \\ 0 \leq l \leq h \\ n_0, \dotsc, n_h \geq 0 \\ n_0 + \dotsb + n_h = n \\ \beta \in \mathrm{NE}_\NN(X)}} \(1-\cfrac{\deg \alpha_l}{2}\) h^{(j)}_{l;n_0,\dotsc, n_h, \beta} \cfrac{\langle \alpha_l,\phi_i,\phi_k, \alpha_0^{\otimes n_0},\dotsc,\alpha_h^{\otimes n_h}\rangle_{0,3+n,\beta}^X}{n_0! \dotsm n_h!} \phi^k \right),
\end{eqnarray*}
with some elements $h^{(j)}_{n_0,\dotsc, n_h, \beta},h^{(j)}_{l;n_0,\dotsc, n_h, \beta} \in S^*$
and where
$$\phi_i \mapsto \langle x,\phi_i,\phi_k, \alpha_0^{\otimes n_0},\dotsc,\alpha_h^{\otimes n_h}\rangle_{0,3+n,\beta}^X \phi^k~,~~x \in \left\lbrace c_1(X),\alpha_l\right\rbrace,$$
is a morphism of Hodge structures from $H^*(X,\QQ)$ to $H^{*+2}(X,\QQ)\(1\)$.
Hence, denoting by $\kappa_j$ the morphism within the parentheses, we see that it is a morphism of free $S^\mathrm{even}$-modules and of Hodge structures from $H^*(X,\QQ)_S$ to $H^{*+2}(X,\QQ)_S\(1\)$.
\end{proof}

In the next two Propositions, we compare $K$-evaluation maps for a morphism $f \colon X \to Z$ and for the identity $\id_X \colon X \to X$.

\begin{pro}\label{incr2}
	Let $f \colon X \to Z$ be a morphism satisfying Assumption \ref{assump} and $\ev \colon R^*_{f,K} \to S^*_K$ be a $K$-evaluation map.
	Here, we distinguish the formal variables $\mathfrak{q}'_X$ and $\mathfrak{q}'_Z$ associated with $R^*_{\id_X,K}$ and $R^*_{f,K}$, respectively.
	
	Then, there exists a $K$-evaluation map $\ev' \colon R^*_{\id_X,K} \to S^*_K$ making the following diagram commutative
	$$
	\begin{tikzcd}[row sep=large, column sep=large]
	& R^*_{f, K} \arrow[rd, "\ev"] & \\
	R^*(X, K) \arrow[ru] \arrow[rd, hook] & \circlearrowright & S^*_K \\
	& R^*_{\id_X, K} \arrow[ru, dashed, "\exists\ev' "] & 
	\end{tikzcd}
	$$
	if and only if for any non-zero curve classes $\beta_1, \beta_2 \in \mathrm{NE}_\NN(X)$, we have
	$$\(J(Q^{\beta_1}) = {\mathfrak{q}'}_Z^{j_1} \quad \textrm{and} \quad J(Q^{\beta_2}) = {\mathfrak{q}'}_Z^{j_2}\) \implies j_1 j_2 >0.$$
\end{pro}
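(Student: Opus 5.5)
The plan is to reduce everything to normalized evaluation functions and then test the constraints of Proposition \ref{conv} on the curve classes $\beta$ with $j(\beta)=0$, since these are the only ones for which $J$ introduces no factor ${Q'}$. By the homogeneity argument at the start of the proof of Proposition \ref{conv}, I may assume $\ev$ is normalized for $f$, so $\ev(\mathfrak{q}'_Z)=b^{\deg(\mathfrak{q}'_Z)}$ and $|\ev({Q'}^\gamma)|<\epsilon^{\int_\gamma\omega}$. Commutativity of the diagram forces
$$\ev'(Q^\beta)=\ev(J(Q^\beta))=\ev({Q'}^{j(\beta)})\,b^{j_\beta\deg(\mathfrak{q}'_Z)}, \qquad \ev'(T_k)=\ev(T_k),$$
and $\ev'$ is free only in the value $\ev'(\mathfrak{q}'_X)$. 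Note that $\deg(Q^\beta)=\deg({Q'}^{j(\beta)})+j_\beta\deg(\mathfrak{q}'_Z)$.

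\emph{Necessity.} Suppose $\ev'$ exists, and take non-zero $\beta_1,\beta_2$ with $J(Q^{\beta_i})={\mathfrak{q}'}_Z^{j_i}$. By the remark following Assumption \ref{assump}, $j_i\neq 0$. Then $|\ev'(Q^{\beta_i})|=1$ and $\deg(Q^{\beta_i})=j_i\deg(\mathfrak{q}'_Z)$. The converse part of Proposition \ref{conv}, applied to $\ev'$, gives
$$1<|\ev'(\mathfrak{q}'_X)|^{j_i\deg(\mathfrak{q}'_Z)/\deg(\mathfrak{q}'_X)}, \qquad i=1,2.$$
Since $|\ev'(\mathfrak{q}'_X)|\neq 1$ is forced, these two inequalities are compatible only if $j_1$ and $j_2$ have the same sign, i.e.\ $j_1j_2>0$.

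\emph{Sufficiency.} Assume all such $j_\beta$ share a sign $\sigma\in\{\pm1\}$. I would take $\lambda:=a^{\sigma r}$ with $r\in\QQ_{>0}$ small. Set $\ev'(\mathfrak{q}'_X):=\lambda^{-\deg(\mathfrak{q}'_X)}b^{\deg(\mathfrak{q}'_X)}$, so that $\lambda\ev'$ satisfies the first normalization condition of Definition \ref{eval}. The remaining checks are as follows.
\begin{itemize}
\item $|\lambda^{\deg T_k}\ev(T_k)|<1$. This holds for $r$ small, since there are finitely many $k$ and each $|\ev(T_k)|<1$.
\item A uniform bound $|\lambda^{\deg Q^\beta}\ev'(Q^\beta)|<\epsilon'^{\int_\beta\omega_X}$ for all non-zero $\beta\in\mathrm{NE}_\NN(X)$. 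For $\beta$ with $j(\beta)=0$, the quantity equals $2^{-r\,|j_\beta|\deg(\mathfrak{q}'_Z)}$. For general $\beta$, it is bounded by
$$2^{-\sigma r\deg(Q^\beta)}\,\epsilon^{\int_{j(\beta)}\omega},$$
where $\int_{j(\beta)}\omega=\int_\beta f^*\omega$.
\end{itemize}

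The hard step is this uniform estimate. I need linear lower bounds of the form $|j_\beta|\geq c\int_\beta\omega_X$ on the part where $j(\beta)=0$, and a combined bound
$$\int_\beta f^*\omega+\sigma r\,j_\beta\deg(\mathfrak{q}'_Z)\;\gtrsim\;\int_\beta\omega_X$$
in general. The term $\deg({Q'}^{j(\beta)})=2\int_\beta f^*c_1(Z)$ is linearly bounded by $\int_\beta f^*\omega$, so it can be absorbed by shrinking $r$. I would handle this exactly as in the proof of Proposition \ref{conv}: write every effective class as a non-negative integer combination of the dual basis $\beta_1,\dotsc,\beta_l$. Then $\beta\mapsto(j(\beta),j_\beta)$ is additive, because $J$ is multiplicative. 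So it suffices to check the estimates on the finitely many generators, using the sign hypothesis for those generators killed by $j$. Taking the minimum over the generators yields $\epsilon'<1$ and an admissible $r$. Proposition \ref{conv} then extends $\ev'$ to the desired $K$-evaluation map, and commutativity holds by construction on the generators.
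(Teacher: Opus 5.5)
Your necessity argument is correct; it is the paper's argument (two classes with $|\ev_2(Q^{\beta_i})|=1$ and degrees of opposite sign, so no rescaling makes both $<1$), obtained here via the converse part of Proposition \ref{conv}. Your sufficiency strategy, rescaling by $a^{\sigma r}$ with $r$ small, is the paper's rescaling by $a^\eta$.

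The gap is in the step you flag as the hard one. With $L:=\log_2\epsilon^{-1}$ you need
\[
\sigma r\deg(Q^\beta)+L\int_{j(\beta)}\omega\;\ge\;c\int_\beta\omega_X\qquad\text{for all }\beta\in\mathrm{NE}_\NN(X),
\]
and this cannot be reduced to a dual basis $\beta_1,\dots,\beta_l$ of an ample $\ZZ$-basis $\omega_1,\dots,\omega_l$. The $\beta_i$ are in general not effective, so neither the bound $|\ev({Q'}^{\gamma})|<\epsilon^{\int_\gamma\omega}$ nor the sign hypothesis says anything about them. A linear inequality checked on them would hold on the whole simplicial cone they span, which is strictly larger than the effective cone.

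Worse, the check must fail in exactly the case the sign hypothesis addresses. Suppose $\beta_0\neq0$ is effective with $j(\beta_0)=0$. Then $\beta_0=\sum_i(\int_{\beta_0}\omega_i)\beta_i$ with all coefficients positive. The relation $\sum_i(\int_{\beta_0}\omega_i)\int_{j(\beta_i)}\omega=0$ then forces $\int_{j(\beta_i)}\omega<0$ for some $i$, unless $j$ vanishes identically. For that $i$ the left-hand side is negative once $r$ is small.

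Concretely, take $\mathrm{Bl}_{\pt}\PP^2\to\PP^2$ with the ample $\ZZ$-basis $2H-E$, $3H-2E$. Let $h$ be the pull-back of a line and $e$ the exceptional curve. The dual basis is $2h-3e$, $-h+2e$. Neither is killed by $j=\mathrm{pr}_*$, so your sign hypothesis never enters. On $-h+2e$ the left-hand side equals $-2r-L<0$ for every $r>0$. Yet the inequality does hold on the actual effective cone, spanned by $e$ and $h-e$, where the values are $2r$ and $4r+L$.

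What is really needed is a statement on the closed cone $\overline{\mathrm{NE}}(X)$. On its compact slice $\Delta=\overline{\mathrm{NE}}(X)\cap\{\int\omega_X=1\}$, the function $\beta\mapsto\int_{j(\beta)}\omega$ is non-negative with zero set a compact face $\Delta_0$. Suppose $\sigma\deg(Q^\beta)>0$ on $\Delta_0$. Then it is $\ge\delta>0$ on a neighbourhood $V$ of $\Delta_0$, while $\int_{j(\beta)}\omega\ge\mu>0$ on $\Delta\setminus V$, and compactness gives the bound for small $r$.

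This positivity on $\Delta_0$ does not follow from the sign hypothesis in the abstract setting of Assumption \ref{assump}, since that hypothesis only concerns integral classes with $j(\beta)=0$, not limit rays. It does hold in the two cases used later:
\begin{itemize}
\item For the blow-up, $\ker\mathrm{pr}_*=\mathbb{R}\,l_E$ on $N_1$ and $-l_E\notin\overline{\mathrm{NE}}(\widetilde X)$. So $\Delta_0$ is the point on the ray of $l_E$, where $\sigma\deg(Q^{l_E})>0$.
\item For $X'\subset X$, $\int_{i_*\beta'}\omega=\int_{\beta'}i^*\omega$ with $i^*\omega$ ample, so $\Delta_0=\emptyset$.
\end{itemize}
The paper's own proof does not establish this estimate either: its displayed inequality uses $\epsilon^{\int_\beta\omega}$ where only $\epsilon^{\int_{j(\beta)}\omega}$ is available. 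So you located the delicate point correctly, but the dual-basis reduction does not close it; the compactness argument above, with the geometric input in each case, does.
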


\begin{rem*}
As it will be shown in the proof of Lemma \ref{ext}, both the blow-up map and the inclusion of a smooth center satisfy the condition on the morphism $J$ written at the end of Proposition \ref{incr2}.
\end{rem*}

\begin{proof}
	Let $\lambda \in F_K$ be such that $\ev_1:=\lambda\ev$ is normalized.
	By composition, we obtain a morphism $\ev_2 \colon R^*(X,K) \to S^*_K$, which we aim to extend to $R^*_{\id_X,K}$.
	We have $|\ev_2(T_k)| = |\ev_1(T_k)| <1$
	for each $0 \leq k \leq h$, and $|\ev_2(Q^\beta)| = |\ev_1({Q'}^{j(\beta)})| < 1$
	for any curve class $\beta \in \mathrm{NE}_\NN(X)$ such that $j(\beta) \neq 0$.
	
	Suppose there exist $\beta_1$ and $\beta_2$ such that
	$$J(Q^{\beta_1}) = {\mathfrak{q}'}_Z^{j_1} \quad \textrm{and} \quad J(Q^{\beta_2}) = {\mathfrak{q}'}_Z^{j_2}$$
	with $j_1<0<j_2$.
	Then $|\ev_2(Q^{\beta_1})|=|\ev_2(Q^{\beta_2})|=1$.
	Consequently, for any $\lambda \in F_K$ with $|\lambda| \neq 1$, one of the values $|\lambda\ev_2(Q^{\beta_1})|$ or $|\lambda\ev_2(Q^{\beta_2})|$ will strictly exceed $1$, making it impossible to obtain a normalized $K$-evaluation function from $\ev_2$.
	
	Now assume that for any $\beta_1$ and $\beta_2$ such that $J(Q^{\beta_1}) = {\mathfrak{q}'}_Z^{j_1}$ and $J(Q^{\beta_2}) = {\mathfrak{q}'}_Z^{j_2}$, the integers $j_1$ and $j_2$ have the same sign.
	We treat the case $j_1, j_2 >0$; the other case is analogous.
	
	We may choose $\eta>0$ sufficiently small to define a $K$-evaluation map $\ev'_1$ for $X$ by setting $\ev'_1(\mathfrak{q}'_X):=b^{\deg(\mathfrak{q}'_X)}$ and $\ev'_1:=a^\eta \ev'$ on $\left\lbrace Q, T_0, \dotsc, T_h \right\rbrace$.
	Indeed, given any number $0<\epsilon<1$, we choose $\eta>0$ such that
	$$|\ev_2(Q^{\beta})|<\epsilon^{\int_{\beta}\omega} < 2^{-\eta |\deg(Q^{\beta})|}$$
	for any curve class $\beta \in \mathrm{NE}_\NN(X)$ of negative degree. This is satisfied if
	$$0<\eta \frac{|\deg(Q^{\beta})|}{\int_{\beta}\omega }< \eta \max\(1,\frac{|\int_{\beta_1} c_1(X)|}{\int_{\beta_1}\omega}, \dotsc, \frac{|\int_{\beta_l} c_1(X)|}{\int_{\beta_l}\omega}\) <\log_2(\epsilon^{-1}),$$
	where $(\beta_1, \dotsc, \beta_l)$ is the basis defined in the proof of Proposition \ref{conv}.
	Finally, we set
	$$\ev':=a^{-\eta}\lambda^{-1}\ev'_1 \colon R^*_{\id_X,K} \to S^*_K$$
	which provides the desired factorization.
\end{proof}

It is worth noting that the converse Proposition requires extra assumptions.

\begin{pro}\label{incr3}
	Let $f \colon X \to Z$ be a morphism satisfying Assumption \ref{assump} and  $\ev \colon R^*_{\id_X,K} \to S^*_K$ be a $K$-evaluation map.
	Here, we distinguish the variables $\mathfrak{q}'_X, Q_X$ and $\mathfrak{q}'_Z, Q_Z$ associated with $R^*_{\id_X,K}$ and $R^*_{f,K}$, respectively.
	
	We assume
	\begin{itemize}
	\item there exists an effective curve class $\beta_0 \in \mathrm{NE}_\NN(X)$ such that $j_{\beta_0} > 0$ and $j(\beta_0)=0$,
	\item for any effective curve class $\beta \in \mathrm{NE}_\NN(X)$, we have the divisibility $j_{\beta_0} | j_\beta$,
	\item the map $j \colon \mathrm{NE}_\NN(X) \to \mathrm{NE}_\NN(Z)$ is surjective,
	\item the morphism $J \colon \QQ[[Q_X]] \to \QQ[{\mathfrak{q}'_Z}^{\pm 1}][[Q_Z]]$ is injective.
	\end{itemize}
	
	Then, there exists a $K$-evaluation map $\ev' \colon R^*_{f,K} \to S^*_K$ making the following diagram commutative
	$$
	\begin{tikzcd}[row sep=large, column sep=large]
	& R^*_{f, K} \arrow[rd, dashed, "\exists\ev'"] & \\
	R^*(X, K) \arrow[ru] \arrow[rd, hook] & \circlearrowright & S^*_K \\
	& R^*_{\id_X, K} \arrow[ru, "\ev "] & 
	\end{tikzcd}
	$$
	if and only if there exists $0 < \epsilon < 1$ such that
	$$|\ev(Q_X^\beta)|< |\ev(Q_X^{\beta_0})|^{\frac{\deg(Q_X^\beta)}{\deg(Q_X^{\beta_0})}} \epsilon^{\int_{j(\beta)} \omega},$$
	for any non-zero effective curve class $\beta \in \mathrm{NE}_\NN(X)$ ($\omega$ here is an ample class in $Z$).
\end{pro}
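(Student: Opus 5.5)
We must construct $\ev'$ on $R^*_{f,K}$ agreeing with $\ev$ on $R^*(X,K)$ via $J$. The generators of $R^*_{f,K}$ are $\mathfrak{q}'_Z$, the ${Q'_Z}^{\gamma}$ for $\gamma\in \mathrm{NE}_\NN(Z)$, and the $T_k$. We set $\ev'(T_k):=\ev(T_k)$, and the plan is to define $\ev'(\mathfrak{q}'_Z)$ and $\ev'({Q'_Z}^\gamma)$ so that $\ev'\circ J=\ev$ on every $Q_X^\beta$. Since $J(Q_X^{\beta_0})={\mathfrak{q}'_Z}^{j_{\beta_0}}$, we are forced to take $\ev'(\mathfrak{q}'_Z)$ to be a $j_{\beta_0}$-th root of $\ev(Q_X^{\beta_0})$. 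Such a root exists in $S_K$ after possibly enlarging $K$ by roots of unity (or in $F_K$ up to $b$-powers, since $F$ has all rational exponents of $a$). Degrees match automatically because $J$ is degree zero.

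\textbf{Necessity.} Suppose $\ev'$ exists. Then $\lambda\ev'$ is normalized for some $\lambda$, so $|\lambda\ev'(\mathfrak{q}'_Z)|=1$ and $|\lambda \ev'({Q'_Z}^\gamma)|<\epsilon^{\int_\gamma\omega}$. Writing $J(Q_X^\beta)={Q'_Z}^{j(\beta)}{\mathfrak{q}'_Z}^{j_\beta}$ and using $\deg(Q_X^\beta)=\deg({Q'_Z}^{j(\beta)})+j_\beta\deg(\mathfrak{q}'_Z)$, we get
\[
|\ev(Q_X^\beta)|=|\lambda|^{-\deg Q_X^\beta}\,|\lambda\ev'({Q'_Z}^{j(\beta)})| ,
\qquad |\ev(Q_X^{\beta_0})|=|\lambda|^{-\deg Q_X^{\beta_0}} .
\]
Combining these gives the stated inequality whenever $j(\beta)\neq0$. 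When $j(\beta)=0$, the exponent of $\epsilon$ is $0$ and we need a strict inequality. Here $J(Q^\beta)$ is a pure power of $\mathfrak{q}'_Z$, so equality holds instead. I expect this boundary case to be the main subtlety: the hypothesis should be read with $\beta$ ranging over classes having $j(\beta)\ne0$, or the strictness must be absorbed using $\epsilon<1$ and $\int_{j(\beta)}\omega>0$. I would check this carefully.

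\textbf{Sufficiency.} We define $\ev'$ on ${Q'_Z}^\gamma$. By surjectivity of $j$, each $\gamma$ equals $j(\beta)$ for some $\beta$. Set
\[
\ev'({Q'_Z}^\gamma):=\ev(Q_X^\beta)\,\ev'(\mathfrak{q}'_Z)^{-j_\beta},
\]
which is meaningful because $j_{\beta_0}\mid j_\beta$: indeed $\ev'(\mathfrak{q}'_Z)^{j_\beta}=\ev(Q_X^{\beta_0})^{j_\beta/j_{\beta_0}}$, so no further choice of roots is needed. Well-definedness is the key step. If $j(\beta)=j(\beta')$, we need $\ev(Q^\beta)\ev(Q^{\beta_0})^{-j_\beta/j_{\beta_0}}$ to be independent of $\beta$. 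Injectivity of $J$ should force $\beta-\beta'$ to be controlled by $\beta_0$. Concretely, $J(Q^{\beta+m\beta_0})=J(Q^{\beta'+m'\beta_0})$ for suitable $m,m'$ with $j_\beta+mj_{\beta_0}=j_{\beta'}+m'j_{\beta_0}$, so $\beta+m\beta_0=\beta'+m'\beta_0$. Multiplicativity of $\ev$ then gives the independence. Multiplicativity of $\ev'$ in $\gamma$ follows in the same way.

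Finally, we rescale by $\lambda$ with $|\lambda|^{\deg(\mathfrak{q}'_Z)}=|\ev'(\mathfrak{q}'_Z)|^{-1}$. The hypothesis then translates exactly into $|\lambda\ev'({Q'_Z}^\gamma)|<\epsilon^{\int_\gamma\omega}$. The $T_k$ bounds are inherited from $\ev$ being a $K$-evaluation map, using Proposition \ref{conv}, which gives $|\ev(T_k)|<|\ev(\mathfrak{q}'_X)|^{\deg T_k/\deg\mathfrak{q}'_X}$. Comparing the normalizations for $\mathfrak{q}'_X$ and $\mathfrak{q}'_Z$ requires the analogous bound, which we obtain via a small $a^\eta$-perturbation as in the proof of Proposition \ref{incr2}. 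Proposition \ref{conv} then extends $\ev'$ to $R^*_{f,K}$, and commutativity holds on generators.
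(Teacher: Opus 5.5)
Your construction is the paper's own. You set $\ev'(T_k)=\ev(T_k)$, take $\ev'(\mathfrak{q}'_Z)$ to be a $j_{\beta_0}$-th root of $\ev(Q_X^{\beta_0})$, and set $\ev'(Q_Z^{j(\beta)})=\ev(Q_X^\beta)\ev'(\mathfrak{q}'_Z)^{-j_\beta}$. Independence of the choice of $\beta$ comes from injectivity of $J$. Your version adds $m\beta_0$ and $m'\beta_0$ with $m,m'\geq 0$ to both sides, which is cleaner than the paper's, since the paper adds a possibly negative multiple of $\beta_0$. Finally, the normalization of $\ev'$ turns the bounds on $\ev'(Q_Z^\gamma)$ into the stated inequality. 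Your worry about $j(\beta)=0$ is justified, and only your first reading works. For $\beta=\beta_0$ the condition reads $|\ev(Q_X^{\beta_0})|<|\ev(Q_X^{\beta_0})|$, so it must be restricted to classes with $j(\beta)\neq 0$. When $j(\beta)=0$, injectivity, divisibility and positivity of effective classes force $\beta\in\NN\beta_0$, where equality holds automatically. The paper's proof does not address this either.

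The genuine gap is your last step, the $T_k$-bounds. The $a^\eta$-trick of Proposition \ref{incr2} works there only because $\mathfrak{q}'_X$ does not occur in the image of $R^*(X,K)$, so the normalization is at your disposal. Here nothing is free. Both $\ev'(T_0)=\ev(T_0)$ and $|\ev'(\mathfrak{q}'_Z)|=|\ev(Q_X^{\beta_0})|^{1/j_{\beta_0}}$ are forced, so any normalizing factor has $|\lambda|=|\ev(Q_X^{\beta_0})|^{-1/\deg Q_X^{\beta_0}}$. Since $\deg T_0=2$, you need $|\lambda|^2|\ev(T_0)|<1$.

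This is not implied by the hypotheses. If $\mu$ normalizes $\ev$ on $X$, then $|\mu\ev(Q_X^{\beta_0})|<1$ only gives $|\mu|<|\lambda|$. That transfers $|\mu\ev(T_k)|<1$ to $|\lambda\ev'(T_k)|<1$ when $\deg T_k\leq 0$, but not for $T_0$.

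Here is a concrete counterexample. Take $\ev$ normalized on $X$ with:
$\ev(Q_X^\beta)=0$ whenever $j(\beta)\neq 0$;
$\ev(Q_X^{n\beta_0})=(a^pb^{\deg Q_X^{\beta_0}})^n$ with $p>0$;
$\ev(T_k)=0$ for $k\geq 1$;
$\ev(T_0)=a^cb^2$ with $0<c<2p/\deg Q_X^{\beta_0}$.
The corrected inequality then holds trivially. However, $|\ev'(T_0)|>|\ev'(\mathfrak{q}'_Z)|^{2/\deg\mathfrak{q}'_Z}$, so by the converse part of Proposition \ref{conv} no $\ev'$ exists.

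So the ``if'' direction needs the extra hypothesis $|\ev(T_0)|<|\ev(Q_X^{\beta_0})|^{2/\deg Q_X^{\beta_0}}$. The paper's proof silently drops the $T_k$-conditions of Definition \ref{eval} as well. This is harmless in Proposition \ref{incr}, since by Remark \ref{string} replacing $\ev$ by $\ev+\lambda\un_X$ kills $\ev(T_0)$ without changing any invariant.

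Similarly, the existence of $\ev(Q_X^{\beta_0})^{1/j_{\beta_0}}$ in $S_K$ is not a matter of adjoining roots of unity. It requires the leading coefficient of $\ev(Q_X^{\beta_0})$ to be a $j_{\beta_0}$-th power in $K$. This is another condition the paper passes over.
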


\begin{rem*}
As it will be shown in the proof of Lemma \ref{ext}, the blow-up map satisfies the assumptions of Proposition \ref{incr3}.
However, the inclusion of a smooth center does not, which will force us later to make a technical distinction between properties $\diamondsuit$ and $\diamondsuit^{\mathrm{strong}}$, see Definition \ref{key}.
\end{rem*}

\begin{proof}
First, we must define $\ev'(T_i)=\ev(T_i)$ for each $0 \leq i \leq h$.
We must also satisfy the condition
$$\ev'(J(Q_X^\beta)) = \ev(Q_X^\beta)$$
for any effective curve class $\beta \in \mathrm{NE}_\NN(X)$.
Since we have $\beta_0 \in \mathrm{NE}_\NN(X)$ such that $j_{\beta_0} > 0$ and $j(\beta_0)=0$, then we must also define
$$\ev'(\mathfrak{q}'_Z) = \ev(Q_X^{\beta_0})^{\frac{1}{j_{\beta_0}}}.$$

Let $\beta' \in \mathrm{NE}_\NN(Z)$.
By surjectivity of the map $j \colon \mathrm{NE}_\NN(X) \to \mathrm{NE}_\NN(Z)$, we can find $\beta \in \mathrm{NE}_\NN(X)$ such that $j(\beta)=\beta'$, so that we must define
$$\ev'(Q_Z^{\beta'}) = \ev(Q_X^{\beta}) \ev(Q_X^{\beta_0})^{-\frac{j_\beta}{j_{\beta_0}}}.$$
It is independant of the choice of $\beta$.
Indeed, if we have $j(\beta_1)=j(\beta_2)=\beta'$, then we have
$$J(Q_X^{\beta_2})={\mathfrak{q}'_Z}^{j_{\beta_2}}Q_Z^{\beta'}={\mathfrak{q}'_Z}^{j_{\beta_2}-j_{\beta_1}}J(Q_X^{\beta_1})=J(Q_X^{\beta_1+\frac{j_{\beta_2}-j_{\beta_1}}{j_{\beta_0}}\beta_0}).$$
By injectivity of the morphism $J \colon \QQ[[Q_X]] \to \QQ[{\mathfrak{q}'_Z}^{\pm 1}][[Q_Z]]$, we deduce
$$Q_X^{\beta_2}=Q_X^{\beta_1+\frac{j_{\beta_2}-j_{\beta_1}}{j_{\beta_0}}\beta_0},$$
so that
$$\ev'(Q_Z^{\beta'}) = \ev(Q_X^{\beta_1}) \ev(Q_X^{\beta_0})^{-\frac{j_{\beta_1}}{j_{\beta_0}}} = \ev(Q_X^{\beta_2}) \ev(Q_X^{\beta_0})^{-\frac{j_{\beta_2}}{j_{\beta_0}}}.$$

At this point, we have defined $\ev'$ in terms of $\ev$ on the generators.
It is a $K$-evaluation function if and only if it satisfies inequalities from Definition \ref{eval}, which are equivalent to the inequality claimed in the statement.
\end{proof}

\subsection{Non-archimedean geometry}\label{nonarchi}
In this section, we recall basic properties of matrices defined over the non-archimedean field $F_K$ that we will use later, e.g.~in the proof of Proposition \ref{incr}.

We consider a matrix
$$A := \sum_{N \geq 0} A_N t^N \in \mathrm{Mat}_{n,n}(F_K[[t]])$$
such that the evaluation
$$A(\zeta) := \sum_{N \geq 0} A_N \zeta^N \in \mathrm{Mat}_{n,n}(F_K)$$
is well-defined for all $\zeta \in D(0,1)$.
Here, $D(\zeta_0,r) \subset F_K$ denotes the open disk of center $\zeta_0 \in F_K$ and radius $r>0$, defined by
$$\zeta \in D(\zeta_0,r) \iff |\zeta - \zeta_0|<r.$$
Furthermore, for each $0 \leq i \leq p$, we assume there exists a $\CC$-subspace $H_i \subset \mathrm{Mat}_{n,1}(\CC)$
such that the module $H_i \otimes_{\CC} F_\CC[[t]]$ is stable under multiplication by $A_\CC$.

\begin{lem}\label{rk}
	For any $0 < r < 1$, the determinant function $\det \colon D(0,r) \to F_K$ is either identically zero or has only finitely many zeros.
	Consequently, the rank function $\rk \colon D(0,r) \to \NN$, given by $\zeta \mapsto \rk(A(\zeta))$, is constant except at finitely many points. That is, there exists a finite set $\mathcal{S} \subset D(0,r)$ such that the restriction $\rk_{|D(0,r)-\mathcal{S}}$ is constant.
	Furthemore, the rank function is lower semi-continuous.
\end{lem}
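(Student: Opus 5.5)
The plan is to treat $\det A(\zeta)$ as a power series over the non-archimedean field $F_K$, apply the usual Strassmann/Weierstrass-preparation argument on a closed disk of radius $r<1$, and then pass to minors to handle the rank.

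\textbf{Step 1: the determinant as a power series.} Since $\det$ is a polynomial in the matrix entries and $F_K[[t]]$ is a ring, we have $\det A = \sum_{N\ge 0} d_N t^N \in F_K[[t]]$, and $\det(A(\zeta)) = \sum_N d_N\zeta^N$ for $\zeta \in D(0,1)$. To justify this, note that each entry series $\sum_N (A_N)_{ij}\zeta^N$ converges for every $|\zeta|<1$. By Lemma \ref{conv crit} this means $|(A_N)_{ij}|\,|\zeta|^N\to 0$ for all $|\zeta|<1$. Products of such series are again convergent, with the Cauchy product, thanks to the ultrametric inequality. Hence $\det A(\zeta)$ equals the evaluation of $\det A$, and $|d_N| s^N \to 0$ for every $s<1$.

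\textbf{Step 2: finiteness of zeros.} Fix $r<s<1$ with $s$ in the value group, for instance $s=2^{-q}$ with $q\in\QQ_{>0}$. If $\det A\neq 0$, then $|d_N|s^N\to 0$ and not all terms vanish, so there is a largest index $\nu$ at which $|d_N| s^N$ attains its maximum. I would then run the standard Strassmann argument: the number of zeros in the closed disk $|\zeta|\le s$, counted with multiplicity, is at most $\nu$. The proof is by induction on $\nu$. When $\nu=0$, the constant term dominates strictly and there is no zero. Otherwise, for a zero $z$, divide $\det A(t)$ by $(t-z)$. The quotient coefficients are $e_M=-\sum_{N>M} d_N z^{N-M-1}$, and the ultrametric inequality shows that the maximal index for $|e_M|s^M$ is at most $\nu-1$.

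This step is the main technical obstacle. One has to handle carefully that $F_K$ is not algebraically closed and that the valuation is discrete on no particular lattice, but the Strassmann bound only uses ultrametric estimates, so it should go through. Since $D(0,r)$ lies inside the closed disk of radius $s$, we obtain finitely many zeros in $D(0,r)$.

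\textbf{Step 3: rank.} For each $k$, consider all $k\times k$ minors of $A$. Each is again a series in $F_K[[t]]$ converging on $D(0,1)$, by the same argument as in Step 1. Let $k_0$ be the largest $k$ for which some $k\times k$ minor is not identically zero. Then $\rk A(\zeta)\le k_0$ for every $\zeta$. Applying Step 2 to one nonzero $k_0$-minor gives a finite set $\mathcal S$ off which that minor is nonzero, so $\rk A(\zeta)=k_0$ on $D(0,r)-\mathcal S$.

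\textbf{Step 4: lower semicontinuity.} Each minor $\zeta\mapsto m(\zeta)$ is continuous on $D(0,r)$, because the series converges uniformly on the closed disk of radius $s$ via the estimate $|d_N|s^N\to0$. If $\rk A(\zeta_0)\ge k$, some $k$-minor satisfies $m(\zeta_0)\neq0$. By continuity $m$ stays nonzero on a neighbourhood of $\zeta_0$, so $\rk\ge k$ there, which is exactly lower semicontinuity.
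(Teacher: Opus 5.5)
Your proposal is correct and follows the same route as the paper. The paper cites the standard fact that a nonzero power series over a non-archimedean field converging on a closed disk has only finitely many zeros there, then reads off the rank statements from the characterization of rank by nonvanishing minors, which is exactly your Steps~3--4. Your Strassmann induction supplies a proof of that cited fact. One small slip: the minus sign in your formula for $e_M$ should be a plus, since $f(t)=(t-z)g(t)$ gives $e_M=\sum_{N>M}d_N z^{N-M-1}$, but only absolute values are used, so nothing changes.
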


\begin{proof}
	The determinant $\det(A)$ is a non-zero power series with coefficients in $F_K$ that converges on the open disk $D(0,1)$, and thus on the closed disk $\overline{D(0,r)}$.
	More generally, any non-zero power series with coefficients in a non-archimedean field that converges on a closed disk has only finitely many zeros within that disk.
	
	The result concerning the rank function follows from the characterization of the rank as the size of a largest invertible sub-matrix.
\end{proof}

Fix $0 < r < 1$, and for any $\zeta \in D(0,r)$, let $\Sp(A(\zeta))$ be the set of eigenvalues of $A$ in $F_{\overline{\QQ}}$.

\begin{lem}\label{eigen}
	There is a finite set $\mathcal{S} \subset D(0,r)$ such that the function
	$$\zeta \mapsto \mathrm{card}\(\Sp(A(\zeta))\)$$
	is constant on $D(0,r) - \mathcal{S}$ and lower semi-continuous on $D(0,r)$.
	Furthermore, for any $\zeta_0 \in D(0,r) - \mathcal{S}$ and any $\alpha_0 \in \Sp(A(\zeta_0))$, there exists a power series $\alpha \in F_{\overline{\QQ}}[[t]]$ that converges in some non-empty open disk $D(\zeta_0,r')$ and satisfies
	$$\alpha(\zeta_0)=\alpha_0 \quad \textrm{and} \quad \alpha(\zeta) \in \Sp(A(\zeta)),$$
	for all $\zeta \in D(\zeta_0,r')$.
\end{lem}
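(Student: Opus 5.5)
The natural route is through the characteristic polynomial
$$P(\zeta,x) := \det\(x - A(\zeta)\) = x^n + c_{1}(\zeta)x^{n-1} + \dotsb + c_n(\zeta).$$
Each coefficient $c_i$ is a polynomial in the entries of $A$. It is therefore a power series in $t$ with coefficients in $F_K$, convergent on $D(0,1)$, hence on $\overline{D(0,r)}$.

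The number of distinct eigenvalues of $A(\zeta)$ in $F_{\overline{\QQ}}$ is $n$ minus the number of repeated roots of $P(\zeta,\cdot)$. I would encode it through the subresultants of $P$ and $\partial_x P$. For each $k$, let $s_k(\zeta)$ be the $k$-th principal subresultant coefficient. It is a polynomial in the $c_i(\zeta)$, so again a power series convergent on $\overline{D(0,r)}$. The standard fact is that $\deg \gcd(P,\partial_xP) = d$ exactly when $s_0=\dotsb=s_{d-1}=0$ and $s_d \neq 0$. This gives
$$\mathrm{card}\(\Sp(A(\zeta))\) = n - \min\{d : s_d(\zeta)\neq 0\}.$$

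Now apply the argument of Lemma \ref{rk}: a non-zero power series convergent on a closed disk over a non-archimedean field has finitely many zeros there, by Weierstrass preparation or Newton polygons. Let $d_0$ be the least index with $s_{d_0} \not\equiv 0$ as a power series. Set $\mathcal{S}$ to be the zero set of $s_{d_0}$ in $D(0,r)$; it is finite. On $D(0,r)-\mathcal{S}$ the cardinality equals $n-d_0$. At points of $\mathcal{S}$ it can only drop, since $s_0,\dotsc,s_{d_0-1}$ vanish identically. This gives lower semi-continuity, in the same way as for the rank in Lemma \ref{rk}.

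For the analytic continuation, fix $\zeta_0 \notin \mathcal{S}$ and $\alpha_0 \in \Sp(A(\zeta_0))$. The multiplicities of the distinct roots are locally constant near $\zeta_0$. I would factor $P = \prod_j Q_j^{m_j}$ into the squarefree part $R := P/\gcd(P,\partial_x P)$, which has constant degree $n-d_0$ there and no repeated root at $\zeta_0$.

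Then $\alpha_0$ is a simple root of $R(\zeta_0,\cdot)$, i.e. $\partial_x R(\zeta_0,\alpha_0)\neq 0$. I would apply the non-archimedean implicit function theorem, concretely Newton iteration or Hensel's lemma over the ring of power series in $(t-\zeta_0)$ with coefficients in $F_{\overline{\QQ}}$. This produces a power series $\alpha$ centered at $\zeta_0$ with $R(\zeta,\alpha(\zeta))=0$ and $\alpha(\zeta_0)=\alpha_0$. Its coefficients can be bounded by a geometric sequence, so it converges on some $D(\zeta_0,r')$. Then $\alpha(\zeta)\in\Sp(A(\zeta))$ for all $\zeta$ in that disk.

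One caveat: the statement writes $\alpha\in F_{\overline{\QQ}}[[t]]$. The series is naturally in $t-\zeta_0$, and I would re-expand it or read the statement that way. A further point is that $F_{\overline{\QQ}}$ (Puiseux-type series over $\overline{\QQ}$) is algebraically closed and complete enough for the Hensel argument. For algebraic closedness I would cite that the Levi-Civita field over an algebraically closed field is real/algebraically closed in the appropriate sense. Alternatively, I would pass to a finite extension of $F_K$ containing the roots of $R$, which is all that is needed.

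The main obstacle is this last step. One must make sure the coefficients of the squarefree part $R$ are still convergent power series near $\zeta_0$, since division by the gcd is only valid where its degree is constant. One must also get an explicit convergence radius from Newton's method over $F_{\overline{\QQ}}$. I would handle both by working on a small disk avoiding $\mathcal{S}$: the subresultant description gives the gcd coefficients as ratios with the unit $s_{d_0}$ in the denominator, and the standard quadratic convergence estimate with $|\partial_xR(\zeta_0,\alpha_0)|^{-1}$ controls the radius.
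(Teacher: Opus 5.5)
Your argument is correct, but it produces the exceptional set and the squarefree part differently from the paper.

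The paper factors the characteristic polynomial into distinct irreducible factors $Q_1,\dotsc,Q_m$ in the formal ring $F_K[[t]][X]$ and works with their product. It takes $\mathcal{S}$ to be the zeros of the resultants $\mathrm{Res}(Q_i,Q_j)$ and $\mathrm{Res}(Q_i,\partial_X Q_i)$. It then applies the formal implicit function theorem to $Q_1\dotsm Q_m$ and checks convergence through the coefficient recurrence.

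You instead use the principal subresultant coefficients $s_d$ of $P$ and $\partial_x P$. These are polynomials in the entries of $A$, so they converge on the closed disk automatically. In one stroke this gives $\mathcal{S}$ as the zero set of a single function $s_{d_0}$, the generic value $n-d_0$ of the cardinality, and lower semi-continuity. The squarefree part $R$ is built only locally near $\zeta_0$, as the quotient of $P$ by the monic gcd $S_{d_0}/s_{d_0}$.

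The paper's factorization keeps slightly more structure: the eigenvalue branches are grouped into irreducible factors, each of them separable. However, its factors are a priori only formal in $t$, and their resultants need not converge on all of $D(0,r)$, a point the paper does not address. For example, a $2\times 2$ companion matrix with characteristic polynomial $X^2-(a^2+t)$ has a characteristic polynomial that splits in $F[[t]][X]$. The two factors $X\pm a(1+a^{-2}t)^{1/2}$ converge only for $|t|<1/4$. Yet the two eigenvalues collide at $t=-a^2$, where $|t|=1/4$.

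Your route is immune to this. The price is relying on the subresultant gcd theorem, which does commute with specialization here because $P$ is monic and $\partial_xP$ has leading coefficient $n\neq 0$.

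Both proofs end the same way: a simple root of the squarefree part, the non-archimedean implicit function theorem, and a series in $t-\zeta_0$ rather than in $t$.
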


\begin{proof}
	Since $F_K[[t]]$ is a unique factorization domain, we can write the characteristic polynomial as
	$$\det\(A(t)-X\) = Q_1(t,X)^{k_1} \dotsm Q_m(t,X)^{k_m} \in F_K[[t]][X], $$
	where $Q_1, \dotsc, Q_m$ are distinct irreducible polynomials in $F_K[[t]][X]$.
	We set
	$$P(t,X) := Q_1(t,X) \dotsm Q_m(t,X),$$
	so that $\mathrm{Sp}(A(\zeta)) = \left\lbrace \lambda \in F_{\overline{\QQ}} ~|~~P(\zeta,\lambda)=0 \right\rbrace$.
	We claim there is a finite set $\mathcal{S} \subset D(0,r)$ such that for any $\zeta \in D(0,1)- \mathcal{S}$, the polynomials $Q_1(\zeta,X), \dotsc, Q_m(\zeta,X)$ split with distinct roots in $\overline{\QQ}$, all with algebraic multiplicity one.
	In particular, for any $\zeta \in D(0,r)- \mathcal{S}$ we have
	$$\mathrm{card}\(\Sp(A(\zeta))\) = \deg\(P(t,X)\).$$
	
	To prove the claim, for any $1 \leq i < j \leq m$, we consider the resultant $\mathrm{Res}_{i,j}$ of $Q_i$ and $Q_j$ with respect to $X$.
	Since the resultant is defined as the determinant of a matrix with coefficients in the integral domain $F_K[[t]]$ and since $Q_i$ and $Q_j$ are distinct irreducible polynomials, $\mathrm{Res}_{i,j}$ is a non-zero element of $F_K[[t]]$.
	Thus, it has only finitely many zeros in $D(0,r)$.
	
	The main property of the resultant is the following: for any $\zeta \in D(0,r)$, we have $\mathrm{Res}_{i,j}(\zeta)=0$ if and only if there exists $\lambda \in \overline{\QQ}$ such that $Q_i(\zeta,\lambda) = Q_j(\zeta,\lambda)=0$.
	Consequently, the set of such values $\zeta \in D(0,r)$ is finite and the polynomials $Q_i(\zeta,X)$ and $Q_j(\zeta,X)$ have no common roots when $\zeta$ is outside this set.
	
	Similarly, for each $1 \leq i \leq m$, the resultant $\mathrm{Res}_i$ of the polynomials $Q_i$ and $\frac{\partial Q_i}{\partial X}$ is a non-zero element of $F_K[[t]]$, because $Q_i$ is irreducible.
	It follows that $Q_i(\zeta, X)$ has a multiple root only for finitely many $\zeta \in D(0,r)$, because $\mathrm{Res}_i$ has only finitely many zeroes in $D(0,r)$, and for any $\zeta \in D(0,r)$, $\mathrm{Res}_i(\zeta)=0$ if and only if there exists a multiple root of the polynomial $Q_i(\zeta,X)$ in $\overline{\QQ}$.
	
	Next, we prove the second statement.
	Let $\zeta_0 \in D(0,r) - \mathcal{S}$ and $\alpha_0 \in \Sp(A(\zeta_0))$.
	Since $\alpha_0$ is a simple root of $P(\zeta_0,X)$, we have $P(\zeta_0,\alpha_0)=0$ and $\frac{\partial P}{\partial X}(\zeta_0,\alpha_0) \neq 0$.
	The implicit function theorem for formal power series provides a unique $\alpha \in F_{\overline{\QQ}}[[t-\zeta_0]]$ such that $P(t,\alpha(t)) = 0 \in F_{\overline{\QQ}}[[t]]$.
	
	To check the convergence, assume $\zeta_0 = \alpha_0=0$ without loss of generality.
	Write $P(t,X) = \sum_{p, q \geq 0} c_{p,q} t^pX^q$. The conditions $P(\zeta_0,\alpha_0)=0$ and $\frac{\partial P}{\partial X}(\zeta_0,\alpha_0) \neq 0$ imply $c_{0,0} = 0$ and $c_{0,1} \neq 0$.
	The coefficients $\alpha_n$ of $\alpha(t)$ are determined by the recurrence:
	$$\forall ~ N \geq 1 ~,~~ c_{0,1} \alpha_N + \sum_{\substack{p,q \geq 0 \\ (p,q) \neq (0,1) \\ n_1, \dotsc, n_q \geq 0 \\ p+n_1+\dotsb+n_q = N}} c_{p,q} ~ \alpha_{n_1} \dotsm \alpha_{n_q} = 0.$$
	For instance, the first relations are
	\begin{eqnarray*}
		c_{1,0} + c_{0,1} \alpha_1 & = & 0, \\
		c_{2,0} + c_{1,1} \alpha_1 + c_{0,2} \alpha_1^2 + c_{0,1} \alpha_2 & = & 0.
	\end{eqnarray*}
	Since $P(t,X)$ converges on $D(0,r)$, then we must have $|c_{p,q}| <r<1$ for any $p,q \geq 0$.
	Hence, we obtain $|\alpha_n|< |c_{0,1}|^{-n}$ for any $n \geq 0$, so that the power series $\alpha(t)$ converges in the disk $D(0,\frac{1}{|c_{0,1}|})$.
\end{proof}

We denote by $U := D(0,r) - \mathcal{S}$ the complement of the finite set from Lemma \ref{eigen}.
For any $\zeta \in U$, we                                                          define the function
$\epsilon_\zeta \colon \Sp(A(\zeta)) \to \NN^{p+2}$ as
$$\epsilon_\zeta \colon \alpha \mapsto \(\(\dim_{F_\CC} \(\(E(\zeta)_\alpha \otimes_{\overline{\QQ}} \CC\) \cap \(H_i \otimes_\CC F_\CC\)\)\)_{0 \leq i \leq p}, \rk_{F_{\overline{\QQ}}} \(A(\zeta) -\alpha\)_{|E(\zeta)_\alpha}\),$$
where $E(\zeta)_\alpha \subset \mathrm{Mat}_{n,1}(\overline{\QQ})$ denotes the generalized eigenspace
$$E(\zeta)_\alpha := \ker\(\(A(\zeta)-\alpha\)^m\) ~,~~m>>1.$$
For each $\zeta \in U$, we define the image set
$$\mathcal{A}(\zeta) := \epsilon_\zeta\(\Sp(A(\zeta))\) \subset \NN^{p+2}.$$
By Lemma \ref{rk}, there exists a finite subset $\mathcal{S}' \subset U$ such that for all $\zeta_1, \zeta_2 \in U-\mathcal{S}'$, we have
$$\mathcal{A}(\zeta_1) = \mathcal{A}(\zeta_2) \subset \NN^{p+2}.$$

\subsection{Invariant properties}
We present here the core Properties $\clubsuit$ and $\coeur$ that are used to prove irrationality statements in the last two sections of this paper.
\begin{dfn}[Spectrum]\label{Spectrum}
For any $K$-evaluation map $\ev \colon R^*_{f,K} \to S^*_K$, we define the \textit{spectrum} as
$$\Sp^X_{\ev} := \left\lbrace \alpha \in F_{\overline{\QQ}} ~|~~ \det \(\ev\(\kappa_\tau\) -\alpha b^2\)=0 \right\rbrace \subset F_{\overline{\QQ}},$$
where $\tau := T_0 \alpha_0 + \dotsb + T_h \alpha_h$.
For any $\alpha \in F_{\overline{\QQ}}$, we define
$$E^X_{\ev,\alpha} := \ker \(\ev\(\kappa_\tau\) -\alpha b^2\)^m \subset H^*(X,\overline{\QQ})_{S_{\overline{\QQ}}} ~,~~m>>1.$$
This is the generalized eigenspace when $\alpha \in \Sp_\ev^X$ and it vanishes otherwise.
\end{dfn}

\begin{rem*}
We explain here why we introduce the term $b^2$ in the definition of the spectrum.
If we write the matrix of $\ev(\kappa_\tau)$ in the homogeneous basis $\(\phi_kb^{-\deg(\phi_k)}\)$ of $H^*(X,K)_{S_K}$, then by homogeneity, it is of the form $M \cdot b^2$ where $M$ is a matrix with coefficients in the field $F_K$.
Thus we observe that the eigenvalues of $\ev(\kappa_\tau)$ are of the form $\alpha b^2$ with $\alpha \in F_{\overline{\QQ}}$ being an eigenvalue of $M$.
Furthermore, if $(\gamma_1, \dotsc, \gamma_r)$ is a $F_{\overline{\QQ}}$-basis of a generalized eigenspace of $M$, then we have
$$E^X_{\ev,\alpha} = S_{\overline{\QQ}} \langle \gamma_1, \dotsc, \gamma_r \rangle.$$
\end{rem*}

\begin{rem*}
We use the field $\overline{\QQ}$ of algebraic numbers for convenience of notations here, but it makes no difference to use instead the number field $K'$ that is the splitting field of the polynomial $\det \(\ev\(\kappa_\tau\)b^{-2} - X\) \in F_K[X]$.
\end{rem*}

\begin{rem}[String equation]\label{string}
For any $K$-evaluation function $\ev$ and any element $\lambda \in F_K$, we define the $K$-evaluation function $\ev+\lambda \un_X$ by
$$x \mapsto \left\lbrace \begin{array}{ll}
\ev(x) & \mathrm{if} ~ x \in \left\lbrace \mathfrak{q}', {Q'}^{\beta'}, T_1, \dotsc, T_h \right\rbrace , \\
\ev(T_0)+\lambda b^2 & \mathrm{if}~x=T_0.
\end{array}\right.$$
From this, we obtain
\begin{eqnarray*}
\ev\(\Eu_\tau\) & = & (\ev+\lambda \un_X)\(\Eu_{\tau}\) - \lambda b^2 ~ \un_X, \\
\ev\(\kappa_\tau\) & = & (\ev+\lambda \un_X)\(\kappa_\tau\) - \lambda b^2 ~, \\
\Sp^X_\ev & = & \Sp^X_{\ev+\lambda \un_X}-\lambda ~ \subset F_{\overline{\QQ}},
\end{eqnarray*}
and for any $\alpha \in F_{\overline{\QQ}}$, we obtain
$$E^X_{\ev,\alpha} = E^X_{\ev+\lambda \un_X,\alpha+\lambda} \subset H^*(X,\overline{\QQ})_{S_{\overline{\QQ}}}.$$
\end{rem}

\begin{dfn}\label{int}
For any $K$-evaluation map $\ev\colon R^*_{f,K} \to S^*_K$ and any $\alpha \in F_{\overline{\QQ}}$, we define the following \textit{integers}:
\begin{eqnarray*}
\rho^X_{\ev,\alpha} & = & \rk_{S^*_{\overline{\QQ}}} \(E^X_{\ev,\alpha} \cap \(H^*(X)^\Hdg \otimes_\QQ S^*_{\overline{\QQ}}\)\), \\
\nu^X_{\ev,\alpha} & = & \rk_{S^*_\CC} \(\(E^X_{\ev,\alpha} \otimes_{\overline{\QQ}}\CC\) \cap \(H^{(2)}(X) \otimes_\CC S^*_\CC\)\), \\
{\nu'}^X_{\ev,\alpha} & = & \rk_{S^*_\CC} \(\(E^X_{\ev,\alpha} \otimes_{\overline{\QQ}} \CC\) \cap \(H^{(1)}(X) \otimes_\CC S^*_\CC\)\), \\
\gamma^X_{\ev,\alpha} & = & \rk_{S^*_{\overline{\QQ}}} \(\ev\(\kappa_{\tau}\) -\alpha\)_{|E^X_{\ev,\alpha}}, \\
\end{eqnarray*}
where $\tau := T_0\alpha_0+\dotsb+T_h\alpha_h$.
These integers are zero whenever $\alpha \notin \Sp^X_\ev$.

Furthermore, we say that two $K$-evaluation functions $\ev$ and $\ev'$ are \textit{equivalent}, and we write $\ev \sim \ev'$, when we have a bijection $\Phi \colon \Sp^X_\ev \to \Sp^X_{\ev'}$ and an isomorphism
$$E^X_{\ev,\alpha} \simeq E^X_{\ev',\Phi(\alpha)}$$
that is in the $K$-linear span of morphisms of Hodge structures.
\end{dfn}

\begin{exa}\label{equiv}
By Remark \ref{string}, we have
$$\ev \sim \ev+\lambda\un_X,$$
for any $K$-evaluation function $\ev$ and any $\lambda \in F_K$.

For any $K$-evaluation function $\ev$ and any non-zero element $\lambda \in F_K$, we have
$$\ev \sim \lambda \ev.$$
Indeed, if we write these endomorphisms in the bases $\mathcal{B} = \(\phi_0, \dotsc, \phi_m\)$ and
$$\mathcal{B}' = \(\lambda^{-\deg(\phi_0)}\phi_0, \dotsc, \lambda^{-\deg(\phi_m)}\phi_m\),$$
then we obtain $[\ev]^\mathcal{B}_\mathcal{B}=[\lambda\ev]^{\mathcal{B}'}_{\mathcal{B}'}$.
\end{exa}

We use the numerical data from Definition \ref{int} to produce Properties $\clubsuit$ and $\coeur$.

\begin{dfn}\label{keys2}
Let $f \colon X \to Z$ be a morphism satisfying Assumption \ref{assump}.

\noindent
We say that a $K$-evaluation map $\ev \colon R^*_{f,K} \to S^*_K$ satisfies:
\begin{itemize}
\item \textit{Property $\clubsuit_{f,K}$} if we have
$$\nu^X_{\ev,\alpha} = 0 \quad \textrm{or} \quad \rho^X_{\ev,\alpha} \geq 3$$
for all $\alpha \in \Sp_\ev^X$,
\item \textit{Property $\coeur_{f,K}$} if we have
$$\nu^X_{\ev,\alpha} = 0 \quad \textrm{or} \quad  {\nu'}^X_{\ev,\alpha} \neq 0 \quad \textrm{or} \quad \gamma^X_{\ev,\alpha} \geq 2$$
for all $\alpha \in \Sp_\ev^X$.
\end{itemize}
\end{dfn}

\begin{nt}
Let $\Omega_{f,K}$ be the set of all $K$-evaluation maps $\ev \colon R^*_{f,K} \to S^*_K$.
It can be identified via the image of generators of $R^*_{f,K}$ with a closed analytic subspace of an affine space over the non-archimedean field $F_K$ (note that the power of $b$ can be recovered uniquely from the degree of the given generator, so that it gives no extra information).
In particular, it inherits a topology.
We denote by $U_{f,K} \subset \Omega_{f,K}$ the dense open subset of all $K$-evaluation maps for which the cardinality of $\mathrm{Sp}_\ev^X$ is maximal.
\end{nt}

\begin{rem}
When all the assumptions of Propositions \ref{incr2} and \ref{incr3} are satisfied (e.g.~for the blow-up morphism), then these Propositions preserve the property of being in $U_{\bullet,K}$, i.e.~an evaluation map $\ev \in U_{f,K}$ induces an evaluation map $\ev' \in U_{\id_X,K}$ and conversely.
\end{rem}

\begin{dfn}\label{key}
Let $f \colon X \to Z$ be a morphism satisfying Assumption \ref{assump} and $\diamondsuit \in \left\lbrace \clubsuit, \coeur \right\rbrace$.

\noindent
We say that the variety $X$ satisfies \textit{Property $\diamondsuit_{f,K}$} if
\begin{itemize}
\item $\left\lbrace \ev \in U_{f,K} ~|~~\ev \textrm{ satisfies Property $\diamondsuit_{f,K}$} \right\rbrace$
is dense in $U_{f,K}$.
\end{itemize}

\noindent
We say that the variety $X$ satisfies \textit{Property $\diamondsuit^\mathrm{strong}_{f,K}$} if
\begin{itemize}
\item $\left\lbrace \ev \in \Omega_{f,K} ~|~~\ev \textrm{ satisfies Property $\diamondsuit_{f,K}$} \right\rbrace=\Omega_{f,K}$.
\end{itemize}

\noindent
At last, we simply say that $X$ satisfies Property $\diamondsuit^{(\mathrm{strong})}$ if there exists a number field $K$ such that $X$ satisfies Property $\diamondsuit^{(\mathrm{strong})}_{\id_X,K}$ for the identity map $\id_X \colon X \to X$.
\end{dfn}

\begin{pro}\label{redef}\label{strong}
Property $\clubsuit_{f,K}$ is equivalent to each one of the following equivalent conditions
\begin{itemize}
	\item all $\ev \in U_{f,K}$ satisfy Property $\clubsuit_{f,K}$,
	\item there exists $\ev \in U_{f,K}$ satisfying Property $\clubsuit_{f,K}$,
\end{itemize}
and Property $\coeur_{f,K}$ is equivalent to the condition
\begin{itemize}
	\item there exists $\ev \in U_{f,K}$ satisfying Property $\coeur_{f,K}$.
\end{itemize}
These are the conditions that we will use from now on.
Furthermore, we have
$$\mathrm{Property}~\clubsuit_{f,K}~\iff~\mathrm{Property}~\clubsuit^{\mathrm{strong}}_{f,K}.$$
\end{pro}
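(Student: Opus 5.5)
Three things need checking: the two reformulations of Property $\clubsuit_{f,K}$ and the one of $\coeur_{f,K}$ (all of which turn on showing that the relevant numerical data are constant, or semi-continuous, on $U_{f,K}$), and the equivalence with the strong version, which needs a semi-continuity argument on all of $\Omega_{f,K}$.

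\textbf{Step 1: constancy on $U_{f,K}$.} I would use the non-archimedean set-up of Section \ref{nonarchi}. Given two maps $\ev_0, \ev_1 \in U_{f,K}$, join them by the affine segment
$$\zeta \mapsto \ev_0 + \zeta(\ev_1 - \ev_0)$$
on the generators, restricted to a disk where it stays inside $\Omega_{f,K}$. If needed, I would chain finitely many such disks, using that $U_{f,K}$ is open and dense. After writing $\ev(\kappa_\tau)$ as $M(\zeta)\, b^2$ in a homogeneous basis, this gives a matrix $A(t)$ with entries in $F_K[[t]]$. It is stabilized by the Hodge subspaces $H^*(X)^\Hdg$, $H^{(2)}(X)$ and $H^{(1)}(X)$, by Proposition \ref{span HS}. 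Lemmas \ref{rk} and \ref{eigen} then show that, off a finite set, the number of eigenvalues is constant and each eigenvalue extends analytically. They also show that the multiset $\mathcal{A}(\zeta)$ of data $(\rho,\nu,\nu',\gamma)$ attached to each eigenvalue is locally constant.

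On $U_{f,K}$ itself the cardinality of the spectrum is maximal, so no eigenvalues collide. Hence the generalized eigenspaces vary analytically, their intersections with the Hodge-stable subspaces have constant rank, and the ranks $\gamma$ are lower semi-continuous. For $\clubsuit$ this is enough: the condition ``$\nu=0$ or $\rho\ge3$'' only involves dimensions of intersections with the fixed Hodge subspaces. These are constant for generalized eigenspaces of a family with a constant number of simple-branch eigenvalues (the projectors are analytic in $\zeta$). So the property holds at one point of $U_{f,K}$ if and only if it holds at all points, which also gives the equivalence with density.

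For $\coeur$, the condition ``$\gamma\ge2$'' is open, since the rank is lower semi-continuous. So the set of $\ev \in U_{f,K}$ satisfying $\coeur_{f,K}$ is open. If it is nonempty, the generic-constancy statement at the end of Section \ref{nonarchi} makes it dense on each connected piece. Conversely, density trivially gives a point.

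\textbf{Step 2: $\clubsuit \Rightarrow \clubsuit^{\mathrm{strong}}$.} Take $\ev \in \Omega_{f,K}$ arbitrary and approximate it by a disk family $\zeta\mapsto\ev_\zeta$ with $\ev_\zeta \in U_{f,K}$ for $\zeta\neq0$. As $\zeta\to0$, several generic eigenvalues $\alpha_1(\zeta),\dots,\alpha_r(\zeta)$ collide to a given $\alpha\in\Sp^X_\ev$. The generalized eigenspace $E_{\ev,\alpha}$ is then the limit of $\bigoplus_i E_{\ev_\zeta,\alpha_i(\zeta)}$: the total spectral projector is analytic in $\zeta$.

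Now suppose $\nu_{\ev,\alpha}\neq0$. By semicontinuity, the dimension of the intersection of $E_{\ev,\alpha}$ with a Hodge subspace equals the corresponding sum over the branches. (This is exact, since the spectral projector commutes with the Hodge decomposition, because $\kappa$ preserves it.) So some branch has $\nu_{\ev_\zeta,\alpha_i}\ne0$, which forces $\rho_{\ev_\zeta,\alpha_i}\ge3$, and summing gives $\rho_{\ev,\alpha}\ge3$. The converse implication is immediate.

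\textbf{Main obstacle.} I expect the hardest point to be the additivity of $\rho$ and $\nu$ under collision, and more broadly making ``analytic dependence of spectral projectors'' rigorous over $F_{\overline{\QQ}}$. I would try to handle it with the resultant and implicit-function arguments of Lemma \ref{eigen}, writing the Riesz projector as a polynomial in $A(\zeta)$ with analytic coefficients obtained from the factorization of the characteristic polynomial.

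This additivity is exactly what fails for $\coeur$: the rank $\gamma$ is not additive, and $\nu'$ may appear only in the limit. This explains why no strong equivalence is claimed for $\coeur$.
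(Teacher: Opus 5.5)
Your proposal is correct and follows essentially the paper's route. Like the paper, you use one-parameter disk families through Lemmas \ref{rk} and \ref{eigen}. For the first statements you use constancy of $(\rho,\nu,\nu')$ and lower semi-continuity of $\gamma$ on $U_{f,K}$. For the equivalence with $\clubsuit^{\mathrm{strong}}_{f,K}$ you use the merging of generalized eigenspaces at collisions, together with the fact that the $\clubsuit$ condition is stable under sums.

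You are more explicit than the paper in joining two points by an ultrametric segment and in proving exact additivity of the Hodge-subspace ranks. One correction: for $\coeur$, replace ``dense on each connected piece'' by the same disk-joining argument from your Step 1, because the non-archimedean parameter space is totally disconnected and its connected pieces are single points.
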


\begin{proof}
Let $(\ev_{\zeta})_{\zeta \in D(0,1)}$ be a one-parameter family of evaluation maps such that the corresponding family of matrices $\([\ev_{\zeta}(\kappa_{\tau})]\)_\zeta$ is as in Section \ref{nonarchi}.
Note that, by Lemma \ref{eigen}, either none of these evaluation maps are in $U_{f,K}$ or there is a finite set $\mathcal{S} \subset D(0,1)$ such that $\ev_{\zeta} \in U_{f,K}$ for all $\zeta \in D(0,1)-\mathcal{S}$.
We assume here that we are in the later case, and for any $\zeta \in D(0,1)-\mathcal{S}$, we write by $\alpha(\zeta) \in \mathrm{Sp}^X_{\ev_{\zeta}}$ the one-parameter deformation of an eigenvalue as in Lemma \ref{eigen}.
Then we observe that the function $\zeta \mapsto (\rho^X_{\ev_\zeta,\alpha(\zeta)},\nu^X_{\ev_\zeta,\alpha(\zeta)},{\nu'}^X_{\ev_\zeta,\alpha(\zeta)})$ is constant, whereas the function $\zeta \mapsto \gamma^X_{\ev_\zeta,\alpha(\zeta)}$ is lower semi-continuous, which concludes the proof of the first statements.

For the last statement, we observe that at a value $\zeta_0 \in \mathcal{S}$ some eigenvalues coincide and then the corresponding generalized eigenspaces merge to a sum. Since the numerical conditions defining Property $\clubsuit_{f,K}$ are preserved by taking a sum of vector spaces, the function $\zeta \mapsto (\rho^X_{\ev_\zeta,\alpha(\zeta)},\nu^X_{\ev_\zeta,\alpha(\zeta)},{\nu'}^X_{\ev_\zeta,\alpha(\zeta)})$ is upper semi-continuous in $D(0,1)$, which concludes the proof.
\end{proof}

\begin{rem}\label{incrr}
Above properties do not really depend on the number field.
Specifically, for a number field extension $K_1 \subset K_2$, we have
$$\textrm{Property }\diamondsuit^{(\mathrm{strong})}_{f,K_2} \implies \textrm{Property }\diamondsuit^{(\mathrm{strong})}_{f,K_1}$$
because every $K_1$-evaluation function $\ev_1$ induces a $K_2$-evaluation function $\ev_2$ by extension of scalars and the numerical invariants stay the same.
\end{rem}

In the next Proposition, we discuss the dependance of our Properties on the morphism, see Lemma \ref{ext} for its application to the blow-up map and to the inclusion of a smooth center.

\begin{pro}\label{incr}
Let $\diamondsuit \in \left\lbrace \clubsuit, \coeur \right\rbrace$ and let $f \colon X \to Z$ be a morphism satisfying Assumption \ref{assump} and the condition of Proposition \ref{incr2}.
Then we have
$$\textrm{Property }\diamondsuit^{\mathrm{strong}}_{\id_X,K} \implies \textrm{Property }\diamondsuit^{\mathrm{strong}}_{f,K}$$
and
$$\textrm{Property }\diamondsuit_{f,K} \implies \textrm{Property }\diamondsuit_{\id_X,K}.$$
If furthermore the assumptions of Proposition \ref{incr3} are satisfied as well, together with the following monoid assumption
\begin{itemize}
\item we have a monoid isomorphism $\mathrm{NE}_\NN(X) \simeq \NN \oplus P$, for some monoid $P$, such that the curve class $\beta_0$ from Proposition \ref{incr3} maps to $(1,0)$,
\end{itemize}
then we have
$$\textrm{Property }\diamondsuit_{\id_X,K} \implies \textrm{Property }\diamondsuit_{f,K}.$$
\end{pro}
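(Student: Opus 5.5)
The three implications will be derived from the factorization results Proposition \ref{incr2} and Proposition \ref{incr3}, plus one observation. If two evaluation maps agree after composition with $R^*(X,K)$, then they give the same endomorphism $\ev(\kappa_\tau)$. Indeed, $\kappa_\tau$ has coefficients in $R^*(X,K)$, and by Proposition \ref{span HS} its evaluation depends only on the images of $Q^\beta$ and $T_k$. Hence two such maps have the same spectrum, the same generalized eigenspaces, and the same integers $\rho,\nu,\nu',\gamma$ from Definition \ref{int}. So Property $\diamondsuit$ for an evaluation map is really a property of its restriction to $R^*(X,K)$.

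\emph{First implication.} Assume $\diamondsuit^{\mathrm{strong}}_{\id_X,K}$ and take any $\ev\in\Omega_{f,K}$. Since $f$ satisfies the condition of Proposition \ref{incr2}, there is $\ev'\in\Omega_{\id_X,K}$ with the same restriction to $R^*(X,K)$. By hypothesis $\ev'$ satisfies Property $\diamondsuit_{\id_X,K}$. By the observation, so does $\ev$. Hence $\Omega_{f,K}$ consists entirely of maps with the property.

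\emph{Second implication.} Assume $\diamondsuit_{f,K}$. For $\clubsuit$, and for $\coeur$ in the form used in Proposition \ref{redef}, it suffices to produce one $\ev'\in U_{\id_X,K}$ satisfying the pointwise property. Take $\ev\in U_{f,K}$ with the property and transport it via Proposition \ref{incr2} to $\ev'\in\Omega_{\id_X,K}$. The spectrum is unchanged.

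The point is then to check that $\ev'$ lies in $U_{\id_X,K}$. The transport $\ev\mapsto\ev'$ embeds $\Omega_{f,K}$, restricted to $R^*(X,K)$, into $\Omega_{\id_X,K}$. So the maximal cardinality on the source is at most the maximal cardinality on the target. I would argue equality by a one-parameter family. Start from a generic $\ev'_1\in U_{\id_X,K}$ and connect it to $\ev'$ along a disc $\zeta\mapsto\ev'_\zeta$ that changes only $T_k$ and $Q$-values. Lemma \ref{eigen} controls the cardinality along the disc. Using Proposition \ref{incr3}-type rescaling, or simply staying inside the image, the generic point of such a disc through a point of the image stays in the image.

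If instead the maximum can only be reached outside the image, I would use lower semicontinuity (Lemma \ref{eigen}) together with the splitting behaviour of Proposition \ref{redef}'s proof: when eigenvalues separate, $\nu$, $\nu'$ and $\rho$ split additively, so the property deforms. I expect this comparison of genericity loci to be the main obstacle. I would first try to show that the image of $\Omega_{f,K}$ is open in $\Omega_{\id_X,K}$ under the sign condition of Proposition \ref{incr2}. That condition is strict-inequality type, so openness is plausible and would give genericity directly.

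\emph{Third implication.} Assume further the hypotheses of Proposition \ref{incr3} and the monoid splitting $\mathrm{NE}_\NN(X)\simeq\NN\oplus P$ with $\beta_0\mapsto(1,0)$. Start from $\ev\in U_{\id_X,K}$ with Property $\diamondsuit$.

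The inequality in Proposition \ref{incr3} may fail for $\ev$. I would correct it by deforming only $\ev(Q^{\beta_0})$, which the splitting makes an independent coordinate. Choose $\ev(Q^{\beta_0})=\zeta$ with $|\zeta|$ close to $1$ relative to the normalization. Then the bound $|\ev(Q^\beta)|<|\zeta|^{\deg Q^\beta/\deg Q^{\beta_0}}\epsilon^{\int_{j(\beta)}\omega}$ holds for all nonzero $\beta$. The $\NN$-factor absorbs the $\beta_0$-direction, and on $P$ the class $j(\beta)$ is nonzero by injectivity of $J$.

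This deformation stays inside $U_{\id_X,K}$ off a finite set by Lemma \ref{eigen}. The property persists along the family because $\rho,\nu,\nu'$ are constant and $\gamma$ is lower semicontinuous there, as in the proof of Proposition \ref{redef}. Proposition \ref{incr3} then gives $\ev''\in\Omega_{f,K}$ with the same restriction, hence the same invariants and a maximal spectrum. This proves $\diamondsuit_{f,K}$.
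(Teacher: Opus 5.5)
\textbf{Verdict: the first and third implications are correct and follow the paper; the second implication has a genuine gap.}

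Your first implication is the paper's argument: transport through Proposition \ref{incr2}, and note that $\ev(\kappa_\tau)$ only sees $R^*(X,K)$. Your third implication also follows the paper. You use the splitting $\mathrm{NE}_\NN(X)\simeq\NN\oplus P$ to turn $\ev(Q^{\beta_0})$ into a disc parameter. Lemma \ref{eigen} and the semicontinuity of the invariants then propagate the property to all parameters outside a finite set. Finally you lift a parameter that satisfies the inequality of Proposition \ref{incr3}.

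The second implication, however, is not proved. Its whole content is that the transport $\ev'$ of some $\ev\in U_{f,K}$ lies in $U_{\id_X,K}$. Equivalently, $\max_{\Omega_{f,K}}\mathrm{card}\,\Sp\geq\max_{\Omega_{\id_X,K}}\mathrm{card}\,\Sp$, and you only establish $\leq$.

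Your fallback runs in the wrong direction. Additivity of $\rho,\nu,\nu'$ under merging of generalized eigenspaces gives upper semicontinuity: the property at a generic point implies it at a special one, as in the last paragraph of the proof of Proposition \ref{redef}. It does not give the converse. For example, take a very general cubic fourfold and the map with $\ev(Q)=0$ and all $\ev(T_k)=0$. It has the single eigenvalue $0$ with $\rho=5$ and $\gamma=4$, so it satisfies both $\clubsuit$ and $\coeur$. Yet for a nearby map with $\ev(Q)\neq 0$, the piece at $0$ has $\rho=2$, $\gamma=1$, $\nu=1$, $\nu'=0$, so both properties fail there.

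The openness route also fails as stated. The hypothesis of Proposition \ref{incr2} is a sign condition on $J$, not an open condition on evaluation maps. Under that hypothesis alone the image need not be open. For the inclusion $X'\subset X$, any two classes $\beta'_1\neq\beta'_2$ with $i_*\beta'_1=i_*\beta'_2$ impose a closed relation between $\ev'({Q'}^{\beta'_1})$ and $\ev'({Q'}^{\beta'_2})$ on every transported map.

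The missing step follows from your own third-part deformation, applied to an arbitrary $\ev'_1\in U_{\id_X,K}$ with no property assumed. By lower semicontinuity of $\mathrm{card}\,\Sp$ (Lemma \ref{eigen}), the maximal cardinality persists outside a finite set of parameters. Lifting a parameter that satisfies the inequality of Proposition \ref{incr3} then gives an element of $\Omega_{f,K}$ with that cardinality. So, under the hypotheses of the third implication, the two maxima coincide and transport carries $U_{f,K}$ into $U_{\id_X,K}$.

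The paper's proof takes this inclusion for granted. Its remark just before Definition \ref{key} asserts it under the hypotheses of Proposition \ref{incr3}. That covers the only use of the second implication, the blow-up in Lemma \ref{ext}. With only the condition of Proposition \ref{incr2}, neither your sketch nor the paper supplies an argument.
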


\begin{proof}
The first two statements are clear because every $K$-evaluation map in $\Omega_{f,K}$ (resp.~in $U_{f,K}$) induces a $K$-evaluation map in $\Omega_{\id_X,K}$ (resp.~in $U_{f,K}$) and they have exactly the same invariants since the endomorphism $\kappa_\tau$ is defined using $R^*(X,K)$.

The third statement is more subtle and requires a deformation argument as in Section \ref{nonarchi}.
By Property $\diamondsuit_{\id_X,K}$, there exists a $K$-evaluation map $\ev' \in U_{\id_X,K}$ satisfying the desired property.
Without any loss of generality, we assume it is normalized.
By the monoid assumption, it can be factored as follows.

Let $t$ be a formal variable and $\ev'_t \colon R^*_{\id_X,K} \to S^*_K[[t]]$
be the morphism given by
$$\ev'_t(Q_X^\beta) = \ev'(Q_X^{\beta'}) t^nb^{\deg(Q_X^{\beta_0})},$$
for any effective curve class $\beta \in \mathrm{NE}_\NN(X)$ where we identify $\beta = (n,p)$ and $\beta'=(n,0)$ by the monoid isomorphism, and equal to $\ev'$ on the remaining generators.
Therefore, we have
$$\ev' = \(\ev'_t\)_{t=\ev'(Q_X^{\beta_0})b^{-\deg(Q_X^{\beta_0})}}.$$
Furthermore, for any $\zeta \in D(0,1)$ (the unit disk in $F_K$), we define a family of evaluation maps $\ev'_\zeta \colon R^*_{\id_X,K} \to S^*_K$ by setting $$\ev'_\zeta:= \(\ev'_t\)_{t=\zeta}.$$

In the homogeneous basis $\mathcal{B}_b:=\(\phi_0 b^{-\deg(\phi_0)}, \dotsc, \phi_m b^{-\deg(\phi_m)}\)$ of $H^*(X,K)_{S_K}$, the endomorphism $\ev'_{1,t}(\kappa_\tau)$ yields a matrix
$$A := b^{-2} \cdot [\ev'_t(\kappa_\tau)]^{\mathcal{B}_b}_{\mathcal{B}_b} \in \mathrm{Mat}(F_K[[t]]).$$
Note that by homogeneity and our choice of the basis, the coefficients of the matrix $A$ do not involve the variable $b$ anymore.
For any $\zeta \in D(0,1)$, the matrix
$$A(\zeta) := (A)_{t=\zeta} \in \mathrm{Mat}(F_K)$$
is well-defined.
By Proposition \ref{span HS}, the endomorphism $\ev'_t(\kappa_\tau)$ is in the $K$-linear span of morphisms of Hodge structures and therefore it stabilizes the subspaces $H^*(X)^\Hdg_S$ and $H^{(k)}(X)_{S_\CC}$.
Thus the matrix $A$ stabilizes the subspaces $H_0:=[H^*(X)^\Hdg_S]_{\mathcal{B}_b}$ and $H_k:=[H^{(k)}(X)_{S_\CC}]_{\mathcal{B}_b}$ of the space of column matrices.

Applying the results of Section \ref{nonarchi}, for any $0 < r < 1$, we find a finite subset $\mathcal{S} \subset D(0,r)$ such that, for any $\zeta \in D(0,r)-\mathcal{S}$, we have $\ev'_\zeta \in U_{\id_X,K}$, and for any $\zeta_1, \zeta_2 \in D(0,r)-\mathcal{S}$, the numerical condition in Property $\diamondsuit_{\id_X,K}$ is satisfied by $\ev'_{\zeta_1}$ if and only if it is satisfied by $\ev'_{\zeta_2}$.
We apply this to $\zeta_1= \ev'(Q_X^{\beta_0})b^{-\deg(Q_X^{\beta_0})}$, for which we have $\ev'_{\zeta_1}=\ev'$ satisying Property $\diamondsuit_{\id_X,K}$, and to $\zeta_2 \in D(0,r)-\mathcal{S}$ such that
the $K$-evaluation map $\ev'_2 := \ev'_{\zeta_2}$ is in $U_{\id_X,K}$ and satisfies the inequality of Proposition \ref{incr3}.
Note that there are infinitely many such values $\zeta_2$.
Consequently, the $K$-evaluation map $\ev'_2$ can be promoted to a $K$-evaluation map $\ev_2 \colon R^*_{f,K} \to S^*_K$ in $U_{f,K}$ by Proposition \ref{incr3} (for some arbitrary choice of the number $0 < \epsilon < 1$).
Furthermore, it satisfies Property $\diamondsuit_{f,K}$ (as $\ev'_2$ does because $\ev'_1$ does), which concludes the proof.
\end{proof}

\subsection{Blow-up formula}\label{sect Bl}
In this section, we study the behaviour of our Properties $\clubsuit$ and $\coeur$ under a blow-up map. The technical core of this section is Theorem \ref{Bl} proven by Iritani in \cite{Iritani}.

Let $X$ be a smooth projective complex variety and $X' \subset X$ be a smooth subvariety of codimension $r \geq 2$.
We denote by $\widetilde{X} = \mathrm{Bl}_{X'} X$ the blow-up of $X$ at the smooth center $X'$ and by $\mathrm{pr} \colon \widetilde{X} \to X$ the projection map.
Recall that we have an isomorphism of Hodge structures
$$H^*(\widetilde{X},\QQ) \simeq H^*(X,\QQ) \oplus \bigoplus_{k=1}^{r-1} H^{*-2k}(X',\QQ)(-k),$$
given by $\mathrm{pr}^*$ on $X$ and by $i_*\circ \(H_E^{k-1} \cup \) \circ \mathrm{pr}_{|E}^*$ on the summands, where $i \colon E \hookrightarrow \widetilde{X}$ is the embedding of the exceptional divisor and $H_E$ is its hyperplane class.
Hence, we can fix graded bases $\mathcal{B}$, $\mathcal{B}'$, and $\widetilde{\mathcal{B}}$ of the cohomologies of $X$, $X'$, and $\widetilde{X}$ such that
$$\widetilde{\mathcal{B}} \simeq \mathcal{B} \sqcup \bigsqcup_{k=1}^{r-1} \mathcal{B}'$$
under the isomorphism. Similarly, we fix graded bases $\mathcal{H}$, $\mathcal{H}'$, and $\widetilde{\mathcal{H}}$ of the Hodge subspaces such that
$$\widetilde{\mathcal{H}} \simeq \mathcal{H} \sqcup \bigsqcup_{k=1}^{r-1} \mathcal{H}'.$$
Let $h$ be the dimension of $H^*(\widetilde{X})^\Hdg$ and introduce formal variables $T_0,\dotsc, T_h$ associated to the elements $\alpha_0,\dotsc,\alpha_h$ of the equivalent bases $\widetilde{\mathcal{H}} \simeq \mathcal{H} \sqcup \bigsqcup_{k=1}^{r-1} \mathcal{H}'$.

Following Iritani \cite{Iritani}, we define the integer
$$s := \left\lbrace \begin{array}{ll}
2(r-1) & \textrm{if $r$ is odd,} \\
r-1 & \textrm{if $r$ is even.}
\end{array}\right.$$
Iritani introduces a ring
$$\widehat{R} := \CC[\mathfrak{q}^{\pm1/s}][[Q]],$$
where $Q$ is the Novikov variable of $X$, equipped with ring maps from the Novikov rings of $X$, $X'$, and $\widetilde{X}$, see \cite[Remark 1.3]{Iritani}.
Furthermore, as in \cite[line above Lemma 3.1]{Iritani}, we assign the degree of $\mathfrak{q}$ to be
$$\deg\(\mathfrak{q}\) = 2\(r-1\).$$
We write $\mathfrak{q}':=\mathfrak{q}^{1/s}$, whose degree is in $\left\lbrace 1,2\right\rbrace$.

\begin{nt}
We introduce the graded $\QQ$-algebra
$$\QQ[\mathfrak{q}^{\pm 1/s}][[Q,T_0,\dotsc,T_h]] =:\widehat{R}^*(X,\QQ) = \bigoplus_{n \in \ZZ} \widehat{R}^n(X,\QQ).$$
According to our previous notation, we have
$$\widehat{R}^*(X,K) := R^*_{\mathrm{pr},K}.$$
Notice that by definition the graded $K$-algebras $R^*_{\id_X,K}$ and $R^*_{X' \subset X,K}$ are almost equal to $\widehat{R}^*(X,K)$ as well; the only difference being that they are obtained by taking a subset of the formal variables $(T_0, \dotsc, T_h)$ corresponding to Hodge classes in $X$ and in $X'$.
Furthermore, by \cite[Equation (1.1)]{Iritani}, we have embeddings of graded $K$-algebras
\begin{eqnarray*}
R^*(X,K) \hookrightarrow \widehat{R}^*(X,K) & & Q \mapsto Q, \\
R^*(\widetilde{X},K) \hookrightarrow \widehat{R}^*(X,K) & & \widetilde{Q}^{\widetilde{\beta}} \mapsto Q^{\mathrm{pr}_*(\widetilde{\beta})} \mathfrak{q}^{-[E] \cdot \widetilde{\beta}},
\end{eqnarray*}
where $E$ is the exceptional divisor in $\widetilde{X}$ and $\mathrm{pr} \colon \widetilde{X} \to X$ is the projection.
In particular, we have
$$\widetilde{Q}^{l_E} \mapsto \mathfrak{q},$$
where $l_E$ is the `vertical' line in the exceptional divisor $E$ defined by $l_E = H_E^{r-2}$.
\end{nt}

\begin{lem}\label{ext}
For each $\diamondsuit \in \left\lbrace \clubsuit, \coeur\right\rbrace$, we have
\begin{eqnarray*}
X' ~ \mathrm{ satisfies } ~ \diamondsuit^{\mathrm{strong}}_{\id_{X'},K} & \implies & X' ~ \mathrm{ satisfies } ~ \diamondsuit^{\mathrm{strong}}_{X' \subset X,K}, \\
\widetilde{X} ~ \mathrm{ satisfies } ~ \diamondsuit_{\id_{\widetilde{X}},K} & \iff & \widetilde{X} ~ \mathrm{ satisfies } ~ \diamondsuit_{\mathrm{pr},K}.
\end{eqnarray*}
\end{lem}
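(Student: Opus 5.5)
The plan is to deduce both statements from Proposition \ref{incr}, so the work is to verify its hypotheses for the two morphisms $X' \subset X$ and $\mathrm{pr} \colon \widetilde{X} \to X$, with the Novikov data given by Iritani's embeddings into $\widehat{R}^*(X,K)$. For the inclusion $X' \subset X$, the map $J$ sends $Q_{X'}^{\beta} \mapsto Q^{i_*\beta}$ with no power of $\mathfrak{q}'$, i.e. $j_\beta = 0$ for all $\beta$. Hence the condition of Proposition \ref{incr2} holds vacuously: there are no non-zero $\beta$ with $J(Q^\beta)$ a pure power of $\mathfrak{q}'$, because $i_*\beta \neq 0$ for effective $\beta \neq 0$ (pair with an ample class). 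Assumption \ref{assump} also holds, since $\deg Q_{X'}^\beta = 2\int_\beta c_1(X')$ while $\deg Q^{i_*\beta} = 2\int_\beta c_1(X)|_{X'}$. These differ by $2\int_\beta c_1(N_{X'/X})$, so the map is not degree-preserving as is. I would therefore use Iritani's normalization from \cite[Remark 1.3]{Iritani}, where $Q_{X'}^\beta \mapsto Q^{i_*\beta}\mathfrak{q}^{-\int_\beta c_1(N)/(r-1)}$ or similar. With that normalization the $\mathfrak{q}$-exponents are nonzero only when $i_*\beta = 0$, which never happens, so the sign condition of Proposition \ref{incr2} is still vacuous. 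The first implication of Proposition \ref{incr} then gives $\diamondsuit^{\mathrm{strong}}_{\id_{X'},K} \implies \diamondsuit^{\mathrm{strong}}_{X'\subset X,K}$.

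For the blow-up, the embedding $\widetilde{Q}^{\widetilde\beta} \mapsto Q^{\mathrm{pr}_*\widetilde\beta}\mathfrak{q}^{-[E]\cdot\widetilde\beta}$ is degree zero because $c_1(\widetilde{X}) = \mathrm{pr}^*c_1(X) - (r-1)[E]$ and $\deg \mathfrak{q} = 2(r-1)$. This verifies Assumption \ref{assump} with $j(\widetilde\beta) = \mathrm{pr}_*\widetilde\beta$ and $j_{\widetilde\beta} = -s[E]\cdot\widetilde\beta$ in terms of $\mathfrak{q}'$. For Proposition \ref{incr2}: if $\mathrm{pr}_*\widetilde\beta = 0$ with $\widetilde\beta$ effective and non-zero, then $\widetilde\beta$ is supported on fibres of $E \to X'$, so $\widetilde\beta = d\, l_E$ with $d>0$. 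Then $[E]\cdot\widetilde\beta = -d < 0$ and $j_{\widetilde\beta} = sd > 0$, so all such exponents have the same sign. This gives $\diamondsuit_{\mathrm{pr},K} \implies \diamondsuit_{\id_{\widetilde X},K}$ by the second implication of Proposition \ref{incr}.

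For the converse I would check the hypotheses of Proposition \ref{incr3} with $\beta_0 = l_E$, so that $j_{\beta_0} = s$ and $j(\beta_0) = 0$.
\begin{itemize}
\item \emph{Divisibility:} $j_{\beta_0} = s$ divides $j_\beta = -s[E]\cdot\beta$ for every $\beta$.
\item \emph{Surjectivity of $\mathrm{pr}_*$ on effective monoids:} every effective curve in $X$ has a strict transform, or a lift through $E$ when it lies in $X'$.
\item \emph{Injectivity of $J$:} the pair $(\mathrm{pr}_*\widetilde\beta, [E]\cdot\widetilde\beta)$ determines $\widetilde\beta$, since $H_2(\widetilde X) = H_2(X)\oplus\ZZ l_E$.
\end{itemize}
Finally I need the monoid assumption $\mathrm{NE}_\NN(\widetilde X) \simeq \NN \oplus P$ with $l_E \mapsto (1,0)$. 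This is the step I expect to be the main obstacle, since effective monoids are rarely free, and the splitting must respect effectivity. My first attempt would take $P$ to be the monoid generated by effective classes not containing $l_E$ as a summand, but it is unclear that $\beta \mapsto$ (coefficient of $l_E$, rest) is well defined. If that fails, I would redo the deformation argument of Proposition \ref{incr} directly with the one-parameter family $t \mapsto \ev'$ rescaled only on $\widetilde{Q}^{l_E}$ via the splitting $H_2(\widetilde X) = H_2(X)\oplus\ZZ l_E$. That substitute only requires convergence for $|t|$ small, and the non-archimedean results of Section \ref{nonarchi} then apply verbatim.
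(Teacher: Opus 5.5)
Up to the last step your argument is the paper's. Both statements are read off from Proposition~\ref{incr}, and your checks of Assumption~\ref{assump} and of the sign condition of Proposition~\ref{incr2} agree with it. For $X'\subset X$, drop the unnormalised description of $J$: the point is not that the $\mathfrak{q}$-exponents vanish, but that $J(Q^\beta)$ is never a pure power of $\mathfrak{q}'$, since $i_*\beta\neq0$. Your direct arguments for divisibility, surjectivity and injectivity in Proposition~\ref{incr3} are correct on their own. This matters, because the paper derives surjectivity and injectivity from the monoid splitting.

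\textbf{The gap.} The implication $\diamondsuit_{\id_{\widetilde X},K}\Rightarrow\diamondsuit_{\mathrm{pr},K}$ needs the monoid hypothesis of Proposition~\ref{incr}, and you do not establish it. The paper settles it in one line, writing $\widetilde\beta=\widetilde\beta'+n\,l_E$ with $\widetilde\beta'$ ``the strict transform of $\mathrm{pr}_*\widetilde\beta$''. Your suspicion is justified: that isomorphism is false in general. In any isomorphism $\mathrm{NE}_\NN(\widetilde X)\simeq\NN\oplus P$ with $l_E\mapsto(1,0)$, an effective class lies in $P$ exactly when subtracting $l_E$ makes it non-effective. So $P$ must be the set of minimal effective lifts, and these need not be closed under addition.

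\textbf{A counterexample.} Take $X=\mathrm{Bl}_{q_1,q_2}\PP^2$ (with classes $h,e_1,e_2$) and $X'$ a general point. In $\widetilde X$ one has $e_1+(h-e_1-e_2)=(h-e_2-l_E)+l_E$. The three classes $e_1$, $h-e_1-e_2$, $h-e_2-l_E$ are $(-1)$-curves, and intersecting each with itself shows that none of them is $l_E$ plus an effective class. Comparing $\NN$-components then gives $0=1$. So your first attempt fails for exactly the reason you feared, and so does the paper's justification.

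\textbf{Why your fallback fails as stated.} With the obvious splitting $H_2(\widetilde X)=H_2(X)\oplus\ZZ l_E$, the $l_E$-coordinate is $-[E]\cdot\widetilde\beta$. This is negative on strict transforms of curves meeting $X'$, so $\ev'_t$ is a Laurent series in $t$ with unbounded negative exponents. It does not converge for small $|t|$. Section~\ref{nonarchi}, which treats power series on disks, does not apply verbatim. And nothing guarantees that any member of the family satisfies the inequality of Proposition~\ref{incr3}.

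\textbf{A repair: tilt the splitting.}
\begin{itemize}
\item Fix an integral ample $\omega$ on $X$ and $\delta>0$ with $\mathrm{pr}^*\omega-\delta[E]$ ample. Then $[E]\cdot\widetilde\beta<\delta^{-1}\,\omega\cdot\mathrm{pr}_*\widetilde\beta$ for effective $\widetilde\beta\neq0$.
\item For an integer $N>\delta^{-1}$, set $n(\widetilde\beta):=N\,\omega\cdot\mathrm{pr}_*\widetilde\beta-[E]\cdot\widetilde\beta$. Then $n(l_E)=1$ and $n>0$ on non-zero effective classes.
\item Start from $\ev'$ with $\ev'(\widetilde Q^{l_E})\neq0$, which is available by density. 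Set $\ev'_\lambda(\widetilde Q^{\widetilde\beta}):=\lambda^{n(\widetilde\beta)}\ev'(\widetilde Q^{\widetilde\beta})$ and leave the other generators unchanged.
\item This is a power-series family with $\ev'_1=\ev'$. It is normalised on a disk of radius slightly larger than $1$, since $n\le C\,\widetilde\omega\cdot\widetilde\beta$ for an ample $\widetilde\omega$ on $\widetilde X$.
\item Using $c_1(\widetilde X)=\mathrm{pr}^*c_1(X)-(r-1)[E]$, the ratio of the two sides of the inequality of Proposition~\ref{incr3} for $\ev'_\lambda$ is $|\lambda|^{N\,\omega\cdot\beta-c_1(X)\cdot\beta/(r-1)}$ times a factor bounded by $B^{\omega\cdot\beta}$, where $\beta=\mathrm{pr}_*\widetilde\beta$.
\item For $N$ large, the inequality therefore holds for every sufficiently small $|\lambda|$. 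The rest of the proof of Proposition~\ref{incr} then goes through unchanged.
\end{itemize}
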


\begin{proof}
We just have to show that all the assumptions of Proposition \ref{incr} are satisfied.

For the variety $X'$, we have the map
$${Q'}^{\beta'} \mapsto Q^{i_*(\beta')} \mathfrak{q}^{-\frac{\int_{\beta'} c_1(\cN_i)}{r-1}}$$
where $\beta' \in \mathrm{NE}_\NN(X')$, $i \colon X' \hookrightarrow X$, and $\cN_i$ its normal bundle, see \cite[Equation 1.1]{Iritani}.
Since $i_*(\beta') \neq 0$ for any non-zero curve class $\beta' \in \mathrm{NE}_\NN(X')$, the condition of Proposition \ref{incr2} is satisfied.
Similarly, for the variety $\widetilde{X}$, the map is
$$\widetilde{Q}^{\widetilde{\beta}} \mapsto Q^{\mathrm{pr}_*(\widetilde{\beta})} \mathfrak{q}^{-[E] \cdot \widetilde{\beta}}.$$
Since $\mathrm{pr}(\widetilde{\beta}) = 0$ implies that $\widetilde{\beta}$ is a positive multiple of $l_E$, the condition of Proposition \ref{incr2} is again satisfied.
Furthermore, the effective curve class $\beta_0:=l_E$ is the vertical line in the exceptional divisor.
We have $j_{\beta_0}=s$ and for any effective curve class $\widetilde{\beta} \in \mathrm{NE}_\NN(\widetilde{X})$ we have $j_{\widetilde{\beta}}=-[E] \cdot \widetilde{\beta}s$, so that the divisibility condition is satisfied as well.
Furthermore, for any curve class $\widetilde{\beta} \in \mathrm{NE}_\NN(\widetilde{X})$, we can write uniquely
$$\widetilde{\beta} = \widetilde{\beta}' + n_{\widetilde{\beta}} l_E,$$
where $\widetilde{\beta}'$ is the strict transform of the effective curve class $\mathrm{pr}_*(\widetilde{\beta}) \in \mathrm{NE}_\NN(X)$ and $n_{\widetilde{\beta}} \in \NN$, giving the monoid isomorphism
$$\mathrm{NE}_\NN(\widetilde{X}) \simeq \NN \oplus \mathrm{NE}_\NN(X),$$
the surjectivity of the map $j \colon \mathrm{NE}_\NN(\widetilde{X}) \to \mathrm{NE}_\NN(X)$, and the injectivity of the morphism $J \colon \QQ[[\widetilde{Q}]] \to \QQ[{\mathfrak{q}'}^{\pm 1}][[Q]]$.
Hence, all the assumptions of Proposition \ref{incr} are satisfied here.
\end{proof}

For any element
$$\tau \in H^2(\widetilde{X})^\Hdg_{R(X,\QQ)} \simeq \(H^*(X)^\Hdg \oplus \bigoplus_{k=1}^{r-1} H^{*-2k}(X')^\Hdg(-k)\)^{*=2}_{R(X,\QQ)},$$
we denote by $\widetilde{\kappa}_\tau$ (resp.~$\kappa_\tau$, $\kappa'_\tau$) the endomorphism associated to $\widetilde{X}$ (resp.~$X$, $X'$) by Definition \ref{matrix}.
Precisely, we have
$$\widetilde{\kappa}_\tau \in \mathrm{End}_{R(\widetilde{X},\QQ)}\(H^*(\widetilde{X},\QQ)_{R(\widetilde{X},\QQ)}\) \subset \mathrm{End}_{\widehat{R}(X,\QQ)^{\mathrm{even}}}\(H^*(\widetilde{X},\QQ)_{\widehat{R}(X,\QQ)}\)$$
and the endomorphism given by the direct sum
$$\(\kappa \oplus \bigoplus_{k=1}^{r-1} \kappa' \)_\tau \in \mathrm{End}_{\widehat{R}(X,\QQ)^{\mathrm{even}}}\(H^*(X,\QQ) \oplus \bigoplus_{k=1}^{r-1} H^{*-2k}(X',\QQ)(-k)\)_{\widehat{R}(X,\QQ)}.$$

\begin{rem}\label{conv ev}
There is a slight abuse of notation here.
If we write the element $\tau = f_0 \alpha_0 + \dotsc + f_h \alpha_h$, then there are subsets
$I_k \subset \left\lbrace 0,\dotsc,h\right\rbrace$
for $k=0, \dotsc, r-1$ such that $\(\alpha_j\)_{j \in I_k}$ corresponds to the basis of the $k$-th copy of $X'$ (or to $X$ for $k=0$).
Setting $\tau_k := \sum_{j \in I_k} f_j \alpha_j$, we use the notation
$$\(\kappa \oplus \bigoplus_{k=1}^{r-1} \kappa'\)_\tau := \kappa_{\tau_0} \oplus \bigoplus_{k=1}^{r-1} \kappa'_{\tau_k}.$$
\end{rem}

\begin{thm}[Iritani, \cite{Iritani}]\label{Bl}
Let $K$ be the number field obtained by adding the $s$-th roots of unity, the imaginary unit $\ci$, and the square root $\sqrt{r-1}$.
Furthermore, let
$$K' := \left\lbrace \begin{array}{ll}
K(\pi) & \textrm{if $\dim_\CC(X') \geq 3$,} \\
K & \textrm{if $\dim_\CC(X') \leq 2$.}
\end{array}\right. $$
Then there exist formal power series
$$f(T_0), \dotsc, f(T_h) \quad \textrm{and} \quad \widetilde{f}(T_0), \dotsc, \widetilde{f}(T_h)$$
in $\widehat{R}^*(X,K)$, such that the functions $f$ and $\widetilde{f}$ are homogeneous of degree zero and satisfy the following.
Defining $f$ and $\widetilde{f}$ as the identity on Novikov variables and using Remark \ref{change of variables}, we produce two morphisms of graded $K$-algebras of degree zero:
\begin{itemize}
\item $g \colon \widehat{R}^*(X,K) \to \widehat{R}^*(X,K)$, $T_k \mapsto g_k$,
\item $\widetilde{g} \colon \widehat{R}^*(X,K) \to \widehat{R}^*(X,K)$, $T_k \mapsto \widetilde{g}_k$.
\end{itemize}
Then there exist two isomorphisms $\Psi$ and $\widetilde{\Psi}$
$$\Psi, \widetilde{\Psi} ~~\colon \(H^*(X,K') \oplus \bigoplus_{k=1}^{r-1} H^{*-2k}(X',K')\)_{\widehat{R}(X,K')} \to H^*\(\widetilde{X},K'\)_{\widehat{R}(X,K')}$$
such that
\begin{eqnarray*}
\(\kappa \oplus \bigoplus_{k=1}^{r-1} \kappa' \)_\tau & = & \Psi^{-1} \circ \widetilde{\kappa}_{g(\tau)} \circ \Psi, \\
\(\kappa \oplus \bigoplus_{k=1}^{r-1} \kappa' \)_{\widetilde{g}(\tau)} & = & \widetilde{\Psi}^{-1} \circ \widetilde{\kappa}_\tau \circ \widetilde{\Psi},
\end{eqnarray*}
where $\tau :=T_0 \alpha_0+\dotsb+T_h \alpha_h$.
Moreover, the isomorphisms $\Psi$ and $\widetilde{\Psi}$ are in the $K'$-span of morphisms of Hodge structures, i.e.~there are finitely many morphisms of free $\widehat{R}(X,\QQ)^{\mathrm{even}}$-modules and of Hodge structures
$$\Psi_1, \dotsc, \Psi_N \colon \(H^*(X,\QQ) \oplus \bigoplus_{k=1}^{r-1} H^{*-2k}(X',\QQ)\(-k\)\)_{\widehat{R}(X,\QQ)} \to H^*\(\widetilde{X},\QQ\)_{\widehat{R}(X,\QQ)}$$
such that
$$\Psi = s_1 \Psi_1 + \dotsb + s_N \Psi_N \textrm{ with $s_1, \dotsc, s_N \in K'$,}$$
and similarly for $\widetilde{\Psi}$.
\end{thm}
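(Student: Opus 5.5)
This statement is Iritani's decomposition theorem for quantum cohomology under blow-ups, \cite[Theorem 1.1]{Iritani}, rephrased in our language. The plan is to derive it from Iritani's result rather than reprove it, checking three things along the way: homogeneity, the fields of definition, and the Hodge-theoretic nature of the intertwining maps.

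\textbf{Step 1: the decomposition itself.} Iritani proves that the quantum $D$-module of $\widetilde{X}$, after base change to $\widehat{R}$, is isomorphic to the direct sum of the quantum $D$-module of $X$ and $r-1$ copies of that of $X'$. This isomorphism holds up to a change of the quantum parameter, namely a mirror-map-type coordinate change $\tau \mapsto g(\tau)$, with inverse $\widetilde{g}$. The first task is to restrict this to the quantum connection in the $z\partial_z$ (Euler) direction. The Euler-direction operator of the quantum $D$-module is exactly multiplication by $\Eu_\tau \star_\tau$ plus the grading operator. Since the isomorphism $\Psi$ is homogeneous of degree zero, the grading parts match. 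This gives the conjugation $\kappa_\tau = \Psi^{-1}\circ \widetilde{\kappa}_{g(\tau)}\circ \Psi$ on the direct sum. The second identity follows by inverting the change of variables.

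\textbf{Step 2: restriction to Hodge parameters.} The parameters $\tau$ range only over Hodge classes. One must check that Iritani's coordinate change sends Hodge directions to Hodge directions. It does, because the mirror map is built from Gromov--Witten invariants and the decomposition of cohomology, and both respect Hodge structures. Homogeneity of degree zero of $f$ and $\widetilde{f}$ follows from the grading convention $\deg\mathfrak{q}=2(r-1)$.

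\textbf{Step 3: the Hodge-theoretic nature of $\Psi$ and $\widetilde{\Psi}$.} This is the main obstacle. Iritani's $\Psi$ is constructed via Fourier--Laplace transforms and $\Gamma$-integral structures, so its coefficients are transcendental a priori. The approach is to show that the leading terms are explicit: they are given by $\mathrm{pr}^*$ and by $i_*\circ(H_E^{k-1}\cup)\circ \mathrm{pr}_{|E}^*$, with constants built from $s$-th roots of unity, $\ci$, and $\sqrt{r-1}$. The remaining series are determined recursively from Gromov--Witten correlators, which are rational and Hodge-compatible, in the same way as in Propositions \ref{preserves HS} and \ref{span HS}.

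The factor $\pi$ needed when $\dim X'\ge 3$ comes from $\Gamma$-class and zeta-value contributions, which vanish in low dimension. The key point is that every coefficient is a product of a scalar in $K'$ with an algebraic correspondence. Expanding $\Psi$ in a $\QQ$-basis of the finitely generated $K'$-span of these scalars then yields the required decomposition $\Psi=\sum s_i\Psi_i$ with each $\Psi_i$ a morphism of Hodge structures.
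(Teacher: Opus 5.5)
Your overall strategy is the paper's: invoke Iritani's decomposition, take its Euler part, restrict to Hodge parameters, and then track fields of definition and Hodge compatibility. But Step~3, which carries the whole content of the statement, is asserted rather than proved, and the mechanism you give for $\pi$ is the wrong one. If $\pi$ came from $\widehat{\Gamma}$-classes and zeta values, this would in general also bring in Euler's constant and odd zeta values such as $\zeta(3)$, which are not known to lie in $K(\pi)$. Moreover $\zeta(2)\,\mathrm{ch}_2=\frac{\pi^2}{6}\mathrm{ch}_2$ would already appear when $\dim_\CC X'=2$, contradicting the case $K'=K$.

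In the paper's reading of Iritani, the Kirwan map and $\widehat{\mathrm{FT}}_X$, $\widehat{\mathrm{FT}}_{\widetilde X}$ are defined over $\QQ$ by pull-backs, hence are Hodge. The continuous Fourier transform $\cF_{Z,0}$, built from the quantum Riemann--Roch operator, uses only algebraic cycles, $s$-th roots of unity, $\ci$ and $\sqrt{r-1}$. The transcendental $\pi$ enters only through the other branches $\cF_{Z,j}$, defined by $\mathfrak{q}^{1/s}\mapsto e^{2\pi\ci j/s}\mathfrak{q}^{1/s}$. Acting on $\lambda_0^{-c_1(\cN_{Z\subset W})/z-r/2}$, this substitution produces $\exp\bigl(\frac{2\pi\ci j}{r-1}\,c_1(\cN_{Z\subset W})/z\bigr)$. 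Its linear term is a divisor class and can be traded for an $s$-th root of unity by Example~\ref{divisor rescaling}, leaving only terms $\pi^n z^{-n}$ with $n\ge 2$.

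Your description of the initial data is also incomplete. You take it to be the classical maps $\mathrm{pr}^*$ and $i_*\circ(H_E^{k-1}\cup)\circ\mathrm{pr}_{|E}^*$ with constants in $K$, the rest being fixed recursively by rational correlators. That would put $\Psi$ over $K$ in every dimension. It skips exactly the corrections, namely the $O(\mathfrak{q}^{-1/(r-1)})$ part of the $Z$-components of Iritani's initial data $\Psi^\circ$, in which the terms $\pi^n z^{-n}$ survive. The missing idea is a degree count. Those components are homogeneous of a fixed degree, and $z^{-1}$ and $\mathfrak{q}^{-1/(r-1)}$ both have degree $-2$. Hence a term $\pi^n z^{-n}$ with $n\ge 2$ must carry a class of degree $2(n+1)$ on $X'$. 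This has two consequences:
\begin{itemize}
\item It bounds the powers of $\pi$ by $\dim_\CC X'$, which is what makes the decomposition $\Psi=s_1\Psi_1+\dotsb+s_N\Psi_N$ \emph{finite}. Since $K(\pi)$ is infinite-dimensional over $\QQ$, your expansion ``in a $\QQ$-basis of the span of the scalars'' needs exactly this, and you never justify it.
\item It shows that $\pi$ is absent when $\dim_\CC X'\le 2$.
\end{itemize}
Finally, you do not show that $f$ and $\widetilde f$ are defined over $K$ rather than $K'$, as the statement requires. In the paper this holds because only the $z^{-1}$-coefficient enters the mirror-map initial data $\varsigma^\circ$ (Iritani's Equation (5.45)), and divisor rescaling handles that coefficient. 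The reconstruction of $\Psi$ and the mirror map from the initial data then uses only Iritani's rational matrix $M$ and the roots of unity $\lambda_j$, so no further transcendental constants appear.
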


\begin{proof}
The proof follows from a careful reading of the paper of Iritani \cite{Iritani}. We provide the necessary details here, following the notation established in that paper.

From Section $2$ through Section $4.1$, all results remain valid when the coefficients $\CC$ are replaced by $\QQ$.
In particular, the discrete Fourier transform is defined over $\QQ$.
The first instance where a larger field is required is in Section $4.2$, specifically in Equation $(4.5)$.
In particular, the continuous Fourier transform is not defined over $\QQ$, and Corollary $4.9$ necessitates an extension of $\QQ$.

In Section $4.3$, the discrete and continuous Fourier transforms are compared.
Corollary $4.11$ holds with $\QQ$ coefficients because it is eventually phrased in terms of the discrete Fourier transform.

In Section $5$, one may replace $\CC$ with $\QQ$ up to Remark $5.6$.
At this stage, it is important to note that results concerning $X$ and $\widetilde{X}$ are defined over $\QQ$, whereas results concerning $Z$ are defined over $\CC$.

Moreover, since the Kirwan map in Section $3.6$ is defined using pull-backs, it is a morphism of Hodge structures.
As a consequence, the maps denoted $\widehat{\mathrm{FT}}_{\widetilde{X}}$ and $\widehat{\mathrm{FT}}_X$ are defined over $\QQ$ and are morphisms of Hodge structures.
The remaining difficulty lies with the map $\widehat{\mathrm{FT}}_{Z,j}$.

The center $Z$ is responsible for both the fractional powers of $\mathfrak{q}$ and the extension of scalars from $\QQ$.
However, the continuous Fourier transform does not require the full field of complex numbers.
From the formulae for $\cF_{F,0}$ near Equation $(4.9)$, for $\Phi(u)$ and for $g(u,\lambda_0)$ in the previous page, and for the quantum Riemann--Roch operator in Equation $(2.24)$, we observe that the operator $\cF_{F,0}$ is defined using only algebraic cycles, $s$-th roots of unity, the imaginary unit $\ci$, and the square root $\sqrt{r-1}$ (note that $c_F=1-r$ for $F=Z$).
One then defines the continuous Fourier transform $\cF_{F,j}$ by
$$\cF_{F,j} = {\cF_{F,0}}_{|\mathfrak{q} \mapsto e^{2 \pi \ci j} \mathfrak{q}}\quad \textrm{(for $F=Z$, we have $S_F=\mathfrak{q}$)}.$$
Note that the notation $e^{2 \pi \ci j}$ is somewhat symbolic here; it should not be replaced by the number $1$ because $\cF_{F,0}$ involves $s$-th roots of $\mathfrak{q}$. It is more precise to write the substitution as $\mathfrak{q}^{1/s} \mapsto e^{2 \pi \ci j/s} \mathfrak{q}^{1/s}$.
Furthermore, $\mathfrak{q}$ appears only in the term $\lambda_0$ defined after Equation $(4.8)$.
Attention must be paid to the term
$$\lambda_0^{\frac{1}{2}-\sum_{\alpha} \(\frac{\rho_\alpha}{z} + \frac{r_\alpha}{2}\)} = \lambda_0^{- \frac{c_1\(\cN_{Z \subset W}\)}{z} - \frac{r}{2}},$$
where $\cN_{Z \subset W}$ is the normal bundle of the blow-up center $Z$ in the space $W:=\mathrm{Bl}_{Z \times 0} \(X \times \PP^1\)$.
When rescaling $\mathfrak{q} \mapsto e^{2 \pi \ci j} \mathfrak{q}$ to define $\cF_{F,j}$, one must introduce
$$e^{\frac{2 \pi \ci j}{r-1} \cdot \(\frac{c_1\(\cN_{Z \subset W}\)}{z} + \frac{r}{2}\)},$$
and specifically the expansion
$$\sum_{n \geq 0} \(\frac{2 \pi \ci j}{r-1} \cdot c_1\(\cN_{Z \subset W}\)\)^n \cfrac{z^{-n}}{n!}.$$
This is an algebraic cycle, but it involves the transcendental number $\pi$.
For $n=1$, we obtain a divisor class which can be transformed into an $s$-th root of unity via Example \ref{divisor rescaling}; thus only terms with $n \geq 2$ create difficulties.

To summarize, the continuous Fourier transform, and thus the main Theorem $5.18$, holds over the field extension $K(\pi)$, where $K$ is the number field containing $s$-th roots of unity, $\ci$, and $\sqrt{r-1}$.
Since the continuous Fourier transform involves only algebraic cycles (via the quantum Riemann--Roch operator), it lies in the span of morphisms of Hodge structures after extending coefficients to $K(\pi)$.
To be precise, the fact that it can be written as a \textit{finite} linear combination of such morphisms follows because $K$ is a finite extension of $\QQ$ and only finitely many powers of $\pi$ are used.
Let us explain this last statement.

First, the number $\pi^n$ always appears with the variable $z^{-n}$ of degree $-2n$. By Example \ref{divisor rescaling}, we may assume $n \geq 2$.
In Section $5.8$, $\Psi$ and $\varsigma$ are reconstructed from their initial data at $Q=\widetilde{\tau}=0$.
By Theorem $5.18$, precisely Equations $(4)$ and $(6)$ there, one observes that $\pi$ appears in $h_{Z,j}$ (see Equation $(5.19)$) and potentially in the $O\(\mathfrak{q}^{-\frac{1}{r-1}}\)$ term.
The former can be handled by Example \ref{divisor rescaling}.
For the latter, a degree argument suffices.
Additionally, regarding $\varsigma^\circ$, one may observe from Equation $(5.45)$ that only the coefficient of $z^{-1}$ appears.
For $\Psi^\circ$, the $Z$-components are homogeneous of degree $-r$, matching the degree of $q_{Z,j}$, so that the term $O\(\mathfrak{q}^{-\frac{1}{r-1}}\)$ has to be homogeneous of degree $0$.
Since the degrees of $\mathfrak{q}^{-\frac{1}{r-1}}$  and $z^{-1}$ are both $-2$, a term involving $\pi^n z^{-n}$ for $n \geq 2$ must carry a cohomology class of degree $2(n+1)$.
Consequently, the power $n$ of $\pi$ is bounded by the complex dimension of the blow-up center $Z$.
In particular, if $\dim_\CC Z \leq 2$, then the cohomology degree is at most $4$, and $\Psi^\circ$ is defined without $\pi$.

Finally, one uses the first equation on page $62$ to reconstruct $\Psi$ and $\widetilde{\tau}$ from the initial data.
This equation involves the matrix $M$ (defined over $\QQ$), $\lambda_j$ (requiring $s$-th roots of unity), and the initial data.
As a consequence, we see that only finitely many powers of $\pi$ are involved in $\Psi$ and $\widetilde{\tau}$, and $\pi$ is not required when $\dim_\CC(Z) \leq 2$.
In particular, in that case at least, one obtains that the main Theorem $5.18$ holds by working over the number field $K$.

In any case, the isomorphisms $\Psi$ and $\widetilde{\Psi}$ are in the $K'$-span of morphisms of Hodge structures, and the variable changes $g$ and $\widetilde{g}$ are defined over $K$, preserving Hodge classes by Proposition \ref{span HS}.
\end{proof}

\begin{rem}
Combining the result of Iritani's recent note \cite{Iritaninote} with the proof above, we see that Theorem \ref{Bl} holds over the number field $K$ obtained by only adjoining $s$-th roots of unity and the imaginary unit $\ci$.
In particular, it is not necessary to add the square root $\sqrt{r-1}$ and it is not necessary to add the transcendantal number $\pi$, i.e.~we have $K'=K$ regardless of the dimension.
\end{rem}

\begin{cor}\label{equality2}
Assume one of the following:
\begin{itemize}
\item let $\ev \colon \widehat{R}^*(X,K) \to S^*_K$ be a $K$-evaluation map in $\Omega_{\mathrm{pr},K}$ and denote by
$$\widetilde{\ev} \colon \widehat{R}^*(X,K) \to S^*_K$$
the composition $\ev \circ g$, which is also a $K$-evaluation map in $\Omega_{\mathrm{pr},K}$,
\item let $\widetilde{\ev} \colon \widehat{R}^*(X,K) \to S^*_K$ be a $K$-evaluation map in $\Omega_{\mathrm{pr},K}$ and denote by
$$\ev \colon \widehat{R}^*(X,K) \to S^*_K$$
the composition $\widetilde{\ev} \circ \widetilde{g}$, which is also a $K$-evaluation map in $\Omega_{\mathrm{pr},K}$,
\end{itemize}
where $K$ is the number field from Theorem \ref{Bl}, and $g$, $\widetilde{g}$ are the changes of variables defined therein.
Then there exist two isomorphisms
$$\textrm{$\Psi$ and $\widetilde{\Psi}$} \quad \colon \(H^*(X,K) \oplus \bigoplus_{k=1}^{r-1} H^{*-2k}(X',K)\)_{S_{K}} \to H^*\(\widetilde{X},K\)_{S_{K}}$$
such that
\begin{eqnarray*}
\ev\(\(\kappa \oplus \bigoplus_{k=1}^{r-1} \kappa' \)_\tau\) & = & \Psi^{-1} \circ \widetilde{\ev}\(\widetilde{\kappa}_\tau\) \circ \Psi, \\
\ev\(\(\kappa \oplus \bigoplus_{k=1}^{r-1} \kappa' \)_\tau\) & = & \widetilde{\Psi}^{-1} \circ \widetilde{\ev}\(\widetilde{\kappa}_\tau\) \circ \widetilde{\Psi},
\end{eqnarray*}
where $\tau :=T_0 \alpha_0+\dotsb+T_h \alpha_h$.
Moreover, the isomorphisms $\Psi$ and $\widetilde{\Psi}$ lie in the $K$-span of morphisms of Hodge structures
$$\(H^*(X,\QQ) \oplus \bigoplus_{k=1}^{r-1} H^{*-2k}(X',\QQ)\(-k\)\)_S \to H^*\(\widetilde{X},\QQ\)_S.$$
In particular, we have the following equality in $F_{\overline{\QQ}}$:
$$\Sp_{\widetilde{\ev}}^{\widetilde{X}} = \Sp_{\ev_0}^X \cup \bigcup_{1 \leq k <r} \Sp_{\ev_k}^{X'},$$
where the $K$-evaluation maps $\ev_0 \in \Omega_{\id_X,K}$ on $X$ and $\ev_k \in \Omega_{X' \subset X,K}$ on the $k$-th copy of $X'$ are induced by $\ev$, see Remark \ref{conv ev}.
Furthermore, for any $\alpha \in F_{\overline{\QQ}}$, we obtain the equalities
$$\epsilon^{\widetilde{X}}_{\widetilde{\ev},\alpha} = \epsilon^{X}_{\ev_0,\alpha} + \sum_{k=1}^{r-1} \epsilon^{X'}_{\ev_k,\alpha}~,~~\textrm{for $\epsilon \in \left\lbrace \rho, \gamma, \nu, \nu' \right\rbrace$.}$$
\end{cor}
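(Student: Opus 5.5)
The plan is to evaluate the two identities of Theorem \ref{Bl} with the given $K$-evaluation map and then read off the spectra and numerical invariants from the resulting block decomposition.

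\textbf{First case.} Let $\ev\in\Omega_{\mathrm{pr},K}$ and set $\widetilde{\ev}:=\ev\circ g$.

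\emph{Well-definedness of $\widetilde{\ev}$.} The change of variables $g$ is a degree-zero morphism of graded $K$-algebras. It is the identity on Novikov variables, and it sends each $T_k$ to a series $g_k$ whose constant term only rescales Novikov variables by the factors $e^{\lambda_k\int\alpha_k}$ of Remark \ref{change of variables}.
\begin{itemize}
\item In Iritani's setting these constants are roots of unity, or exponentials of $2\pi\ci\,p/q$ times integral divisors, as in Example \ref{divisor rescaling}. Their absolute value in $F_K$ is $1$.
\item Hence $\ev(g_k)$ is a sum of monomials in the images of generators, and each such image has absolute value $<|\ev(\mathfrak{q}')|^{\deg/\deg \mathfrak{q}'}$ by Proposition \ref{conv}.
\end{itemize}
These inequalities are preserved, so $\widetilde{\ev}$ again satisfies Definition \ref{eval}, i.e.\ $\widetilde{\ev}\in\Omega_{\mathrm{pr},K}$. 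The symmetric argument with $\widetilde{g}$ handles the second case.

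\emph{Applying $\ev$ to the first identity of Theorem \ref{Bl}.} Apply $\ev$ (extended to $K'$) coefficientwise to
$$\Big(\kappa\oplus\bigoplus_{k}\kappa'\Big)_\tau=\Psi^{-1}\circ\widetilde{\kappa}_{g(\tau)}\circ\Psi.$$
Since $\widetilde{\kappa}_{g(\tau)}=g(\widetilde{\kappa}_\tau)$ by Remark \ref{formal quantum product}, we get $\ev(\widetilde{\kappa}_{g(\tau)})=\widetilde{\ev}(\widetilde{\kappa}_\tau)$. Set $\Psi_{\ev}:=\ev(\Psi)$, computed on matrices as in the proof of Proposition \ref{span HS}.

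\emph{Hodge span.} The decomposition $\Psi=\sum s_i\Psi_i$, with $\Psi_i$ morphisms of Hodge structures over $\widehat R(X,\QQ)^{\rm even}$, pushes forward to $\ev(\Psi)=\sum_{i,j}s_i s'_j\,\ev_j(\Psi_i)$. This is the same decomposition of $F_K$-coefficients along a $\QQ$-basis of $K$ used in Proposition \ref{span HS}. Each piece is a morphism of Hodge structures over $S$, so $\ev(\Psi)$ lies in the $K$-span (or $K'$-span) of such morphisms. By the remark following Theorem \ref{Bl} we may take $K'=K$. The inverse is handled by evaluating $\Psi^{-1}$ in the same way.

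\emph{Invariants.} Once the conjugation formula is established, the rest is linear algebra.
\begin{itemize}
\item The left-hand side is block diagonal: $\ev_0(\kappa_{\tau_0})\oplus\bigoplus_k\ev_k(\kappa'_{\tau_k})$, where $\ev_k$ is the restriction of $\ev$ to the relevant variables, as in Remark \ref{conv ev}. Each $\ev_k$ is a $K$-evaluation map for $X'\subset X$, respectively for $\id_X$.
\item The characteristic polynomial is therefore the product of the blockwise ones. This gives the union formula for spectra, and the generalized eigenspaces satisfy $E^{\widetilde X}_{\widetilde\ev,\alpha}=\Psi\big(E^X_{\ev_0,\alpha}\oplus\bigoplus_k E^{X'}_{\ev_k,\alpha}\big)$.
\item $\Psi$ is an isomorphism in the $K$-span of Hodge morphisms, and the twists $(-k)$ shift Hodge type by $(k,k)$. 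So $\Psi$ preserves the Hochschild-degree subspaces $H^{(1)},H^{(2)}$ after $\otimes\CC$, and also the Hodge-class subspaces $H^{\Hdg}_S$. This yields additivity of $\rho,\nu,\nu'$, and conjugation invariance yields additivity of $\gamma$.
\end{itemize}

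\textbf{Main obstacle.} The delicate point is showing that $\Psi$ is an isomorphism onto the Hodge-class subspace, and not merely that it maps into it. Since $\Psi=\sum s_i\Psi_i$ with the $s_i$ linearly independent over $\QQ$, the image of a rational Hodge class is a sum $\sum s_i\Psi_i(x)$ of Hodge classes. This is a Hodge class over $S_{\overline\QQ}$ in the sense of Definition \ref{int}, since ranks are computed after tensoring with $S_{\overline{\QQ}}$.

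Applying the same argument to $\Psi^{-1}$ gives the reverse inclusion, so ranks are preserved. This inverse-Hodge-span property must also be checked for $\widetilde\Psi$; it should follow from Theorem \ref{Bl}, which provides both $\Psi$ and $\widetilde{\Psi}$ with inverses in the span.
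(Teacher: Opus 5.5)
Your proposal is correct and follows essentially the paper's route: the corollary is obtained by applying the evaluation map to the two identities of Theorem~\ref{Bl}, and the spectra and invariants are then read off from the block-diagonal conjugation. The paper only remarks that $\ev\circ g$ remains an evaluation map because $g$ fixes the Novikov variable, and your first step spells this out. One small correction: Theorem~\ref{Bl} does not actually assert that $\Psi^{-1}$ lies in the span, but you do not need it. Indeed, $\Psi$ maps the Hodge-class and Hochschild-degree parts of each $E^X_{\ev_0,\alpha}\oplus\bigoplus_k E^{X'}_{\ev_k,\alpha}$ injectively into those of $E^{\widetilde{X}}_{\widetilde{\ev},\alpha}$, and after summing over $\alpha$ the total ranks on both sides agree by the blow-up decomposition of Hodge structures, which forces equality for every $\alpha$.
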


\begin{rem*}
In Corollary \ref{equality2}, given $K$-evaluation maps $\ev$ and $\widetilde{\ev}$, the compositions $\ev \circ g$ and $\widetilde{\ev} \circ g$ are degree-zero homogeneous morphisms of graded $K$-algebras, but it is not obvious that they are $K$-evaluation maps.
Indeed, the second statement in Proposition \ref{conv} does not guarantee the existence of a number $0<\epsilon<1$ such that, for any non-zero curve class $\beta \in \mathrm{NE}_\NN(X)$, the inequality
$$0 \leq |\ev(g(Q^\beta))|<\epsilon^{\int_{\beta} \omega} < 1$$
holds (similarly with $\widetilde{\ev}$).
However, in the special situation of Corollary \ref{equality2}, it does hold because the morphism $g$ provided by Theorem \ref{Bl} is just the identity on the Novikov variable.
\end{rem*}

\begin{cor}\label{generic}
We have the equivalence
$$\widetilde{\mathrm{ev}} \in U_{\mathrm{pr},K} \iff
\left\lbrace \begin{array}{l}
\mathrm{ev}_0 \in U_{\id_X,K} \textrm{ and } \mathrm{ev}_k \in U_{X' \subset X,K} \textrm{ for each $k$}, \\
\textrm{the spectra $\left\lbrace \Sp_{\ev_0}^X, \Sp_{\ev_k}^{X'}\right\rbrace_k$ have empty two-by-two intersections.}
\end{array}
\right. $$
In particular, we then have
$$\Sp_{\widetilde{\ev}}^{\widetilde{X}} = \Sp_{\ev_0}^X \sqcup \bigsqcup_{1 \leq k <r} \Sp_{\ev_k}^{X'}$$
and $\epsilon^{\widetilde{X}}_{\widetilde{\ev},\alpha} = \epsilon^{X}_{\ev_0,\alpha}$ or $\epsilon^{X'}_{\ev_k,\alpha}$, for $\epsilon \in \left\lbrace \rho, \gamma, \nu, \nu' \right\rbrace$.
\end{cor}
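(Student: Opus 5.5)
We need to prove Corollary \ref{generic}, the equivalence characterizing when $\widetilde{\ev}$ lies in $U_{\mathrm{pr},K}$. The plan is to start from Corollary \ref{equality2} and compare cardinalities of spectra. By that corollary, for every $\widetilde{\ev} \in \Omega_{\mathrm{pr},K}$ with associated $\ev = \widetilde{\ev}\circ\widetilde{g}$ and induced maps $\ev_0,\ev_k$, we have $\Sp^{\widetilde{X}}_{\widetilde{\ev}} = \Sp^X_{\ev_0}\cup\bigcup_k \Sp^{X'}_{\ev_k}$. Consequently
$$\mathrm{card}\,\Sp^{\widetilde{X}}_{\widetilde{\ev}} \leq \mathrm{card}\,\Sp^X_{\ev_0}+\sum_{k=1}^{r-1}\mathrm{card}\,\Sp^{X'}_{\ev_k} \leq N_X + (r-1)N_{X'},$$
where $N_X$ and $N_{X'}$ denote the maximal cardinalities attained on $\Omega_{\id_X,K}$ and $\Omega_{X'\subset X,K}$. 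Equality holds throughout exactly when each $\ev_0,\ev_k$ is generic and the spectra are pairwise disjoint. It therefore suffices to show that the maximum $\widetilde N$ of $\mathrm{card}\,\Sp^{\widetilde X}$ over $\Omega_{\mathrm{pr},K}$ equals $N_X+(r-1)N_{X'}$, i.e.\ that the upper bound is attained.

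To show the bound is attained, I would use the parametrization of $\Omega_{\mathrm{pr},K}$ by the images of the generators. The variables $T_j$ split into the index sets $I_0,\dots,I_{r-1}$ of Remark \ref{conv ev}, while $\mathfrak{q}'$ and $Q$ are shared. Since $\widetilde{g}$ (or $g$) is a degree-zero automorphism that is the identity on Novikov variables, it induces a homeomorphism of $\Omega_{\mathrm{pr},K}$. We may therefore choose $\ev$ freely and set $\widetilde{\ev}=\ev\circ g$. Using Lemma \ref{eigen} and the density of $U$, choose the values on $T_j$ for $j \in I_0$ and on $I_k$ so that each $\ev_0,\ev_k$ is generic; genericity is an open dense condition in these variables, with $Q,\mathfrak{q}'$ fixed. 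Next, use the string-equation shift of Remark \ref{string} independently on each block: add $\lambda_k b^2$ to the $T$-variable corresponding to the unit class of the $k$-th copy ($\un_X$ for $k=0$, and $\un_{X'}$ pushed into the $k$-th summand). This translates $\Sp^{X'}_{\ev_k}$ by $\lambda_k$ without changing genericity. Choosing the $\lambda_k\in F_K$ with distinct valuations, or simply generic ones, makes the finitely many spectra pairwise disjoint, which gives $\widetilde N = N_X+(r-1)N_{X'}$.

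Both directions of the equivalence then follow immediately from the equality case of the displayed inequality. The disjoint-union formula for the spectra is then clear, and the additive formula of Corollary \ref{equality2} reduces to a single nonzero term for each $\alpha$, because $\epsilon^{\bullet}_{\ev,\alpha}=0$ when $\alpha$ is not in the corresponding spectrum.

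The main obstacle is checking that these independent shifts are legitimate $K$-evaluation maps in $\Omega_{\mathrm{pr},K}$. In particular, we need $|\ev(T_j)|$ to stay below the normalization bound of Proposition \ref{conv}. I would handle this by choosing the $\lambda_k$ of small absolute value, distinct and generic, so that the resulting maps still satisfy the inequalities of Definition \ref{eval}.
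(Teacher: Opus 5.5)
Your proposal is correct and is essentially the paper's argument. Both compare cardinalities through the union formula $\Sp^{\widetilde{X}}_{\widetilde{\ev}}=\Sp^X_{\ev_0}\cup\bigcup_k\Sp^{X'}_{\ev_k}$ of Corollary~\ref{equality2}, and both separate the spectra by unit shifts as in Remark~\ref{string}. You repackage the paper's two contradiction arguments as the equality case of the chain $\mathrm{card}\,\Sp^{\widetilde{X}}_{\widetilde{\ev}}\le\mathrm{card}\,\Sp^X_{\ev_0}+\sum_k\mathrm{card}\,\Sp^{X'}_{\ev_k}\le N_X+(r-1)N_{X'}$, which also makes explicit the shared Novikov variables and the admissibility of the shifts that the paper glosses over.

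One step needs a small repair. Genericity is not dense in the $T$-variables for every fixed value of $(\ev(Q),\ev(\mathfrak{q}'))$: at $\ev(Q)=0$, $\ev_0(\kappa_\tau)$ is $\ev(T_0)\cdot\id$ plus a nilpotent cup-product operator, so $\Sp^X_{\ev_0}$ is a single point. You should therefore choose the common Novikov values generically too, for example by intersecting in $\Omega_{\mathrm{pr},K}$ the finitely many dense open preimages of $U_{\id_X,K}$ and $U_{X'\subset X,K}$ under restriction.

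You also do not need $g$ to be an automorphism, which Theorem~\ref{Bl} does not assert. The first case of Corollary~\ref{equality2} already lets you start from an arbitrary $\ev$ and set $\widetilde{\ev}=\ev\circ g$.
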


\begin{proof}
It follows from Corollary \ref{equality2}.
We start with the direction $\implies$.
If $\Sp^X_{\mathrm{ev}_0}$ or one of the $\Sp^X_{\mathrm{ev}_k}$ is not maximal, then we can take another $K$-evaluation map with a bigger spectrum and it will induce another $K$-evaluation map for $\widetilde{X}$ with a bigger spectrum as well.
If some intersection $\Sp_{\ev_{k_1}}^{X'} \cap \Sp_{\ev_{k_2}}^{X'}$ (or similarly $\Sp_{\ev_{k_1}}^{X'} \cap \Sp_{\ev_0}^X$) is non-empty, then we shift the $K$-evaluation map $\ev_{k_1}$ by $\lambda \un_{X'}$ as in Remark \ref{string} by some $\lambda \in F_K$ such that
$$\Sp_{\ev_{k_1}+\lambda \un_{X'}}^{X'} \cap \Sp_{\ev_{k_2}}^{X'} = (\Sp_{\ev_{k_1}}^{X'}+\lambda) \cap \Sp_{\ev_{k_2}}^{X'} = \emptyset.$$
Thus the induced $K$-evaluation map $\widetilde{\mathrm{ev}}_\lambda$ satisfies
$$\mathrm{card} \(\Sp_{\widetilde{\ev}_\lambda}^{\widetilde{X}}\) > \mathrm{card} \(\Sp_{\widetilde{\ev}}^{\widetilde{X}}\).$$
Conversely, regarding the implication $\impliedby$, let us assume that $\Sp_{\widetilde{\ev}}^{\widetilde{X}}$ is not maximal and that the two-by-two intersections of spectra are empty.
Then we can choose another $K$-evaluation map $\widetilde{\ev}'$ with a bigger spectrum and such that the induced $K$-evaluation maps $\ev'_0$ and $\ev'_k$ also have empty two-by-two intersections of spectra (it is possible up to a shift by the unit class $\un_{X'}$).
Then we obtain
$$\mathrm{card} \(\Sp_{\ev'_0}^X\) + \sum_{1 \leq k <r} \mathrm{card} \(\Sp_{\ev'_k}^{X'}\) > \mathrm{card} \(\Sp_{\ev_0}^X\) + \sum_{1 \leq k <r} \mathrm{card} \(\Sp_{\ev_k}^{X'}\)$$
and thus there exists one $0 \leq k <r$ such that $\mathrm{card} \(\Sp_{\ev'_k}^{X^{(')}}\) >\mathrm{card} \(\Sp_{\ev_k}^{X^{(')}}\)$.
\end{proof}

Now, we gather all the ingredients in order to prove the invariance of our Properties under a blow-up map.

\begin{pro}\label{Bl equiv}
Let $\diamondsuit \in \left\lbrace \clubsuit, \coeur\right\rbrace$ and let $K$ be a number field containing all $s$-th roots of unity and the imaginary unit $\ci$.
Then we have the equivalence
\begin{center}
	\begin{tikzpicture}
	\node[
	draw=black,       
	fill=gray!5,              
	thick,                    
	rounded corners=4pt,      
	inner sep=10pt            
	] 
	{
		$\displaystyle 
		\widetilde{X} \text{ satisfies Property } \diamondsuit_{\id_{\widetilde{X}},K} \iff 
		\begin{cases}
		X \text{ satisfies Property } \diamondsuit_{\id_X,K} \text{ and} \\
		X' \text{ satisfies Property } \diamondsuit_{X' \subset X,K}.
		\end{cases}$
	};
	\end{tikzpicture}
\end{center}

\noindent
Note as a reminder that we have already proven:
$$X' ~\textrm{satisfies Property}~ \diamondsuit^{\mathrm{strong}}_{\id_{X'},K} \implies X' ~\textrm{satisfies Property}~ \diamondsuit_{X' \subset X,K}.$$
Furthermore, we also have
$$\widetilde{X} ~ \textrm{satisfies Property} ~ \diamondsuit^{\mathrm{strong}}_{\id_{\widetilde{X}},K}  \implies X ~ \textrm{satisfies Property} ~ \diamondsuit^\mathrm{strong}_{\id_X,K}$$
and
$$\left. \begin{array}{r}
X ~ \textrm{satisfies Property} ~ \diamondsuit^\mathrm{strong}_{\id_X,K} \\
\textrm{and}~X' ~\textrm{satisfies Property}~ \diamondsuit^\mathrm{strong}_{\id_{X'},K}
\end{array}
\right\rbrace
\implies \widetilde{X} ~ \textrm{satisfies Property} ~ \diamondsuit^{\mathrm{strong}}_{\mathrm{pr},K}$$
\end{pro}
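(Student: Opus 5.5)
The plan is to reduce everything to the comparison of spectra and numerical invariants in Corollary \ref{equality2} and Corollary \ref{generic}, passing through the intermediate morphism $\mathrm{pr}$. By Lemma \ref{ext}, $\widetilde{X}$ satisfies $\diamondsuit_{\id_{\widetilde{X}},K}$ if and only if it satisfies $\diamondsuit_{\mathrm{pr},K}$. So for the boxed equivalence it suffices to show
$$\diamondsuit_{\mathrm{pr},K} \iff \diamondsuit_{\id_X,K} \textrm{ and } \diamondsuit_{X'\subset X,K}.$$
By Proposition \ref{redef}, each of these properties may be tested on a single generic evaluation map, i.e.\ one lying in $U_{\bullet,K}$.

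\emph{Direction $\Leftarrow$.} Pick $\ev_0 \in U_{\id_X,K}$ satisfying $\diamondsuit$ and $\ev' \in U_{X'\subset X,K}$ satisfying $\diamondsuit$. Use $\ev'$ on each of the $r-1$ copies of $X'$, after shifting copy $k$ by $\lambda_k \un_{X'}$ as in Remark \ref{string}. By Example \ref{equiv} these shifts preserve both genericity and the property. Choose the $\lambda_k \in F_K$ so that all the spectra are pairwise disjoint. Then assemble an evaluation map $\ev$ on $\widehat{R}^*(X,K)$ which induces $\ev_0,\ev_k$. This is possible because the $T$-variables for $X$ and for the copies of $X'$ are disjoint sets, and the Novikov/$\mathfrak q$ data are shared; one rescales by some $\lambda$ so that both $\ev_0$ and $\ev'$ use the same value of $\mathfrak{q}'$ and $Q$. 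Set $\widetilde{\ev}=\ev\circ g$. By Corollary \ref{generic}, $\widetilde{\ev}\in U_{\mathrm{pr},K}$, and every $\alpha$ in its spectrum has invariants $(\rho,\nu,\nu',\gamma)$ equal to those of exactly one piece. Hence the numerical condition in Definition \ref{keys2} transfers.

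\emph{Direction $\Rightarrow$.} Take $\widetilde{\ev}\in U_{\mathrm{pr},K}$ satisfying $\diamondsuit$ and let $\ev=\widetilde{\ev}\circ\widetilde g$. By Corollary \ref{generic}, the induced maps satisfy $\ev_0\in U_{\id_X,K}$ and $\ev_k\in U_{X'\subset X,K}$, and their spectra are disjoint. Each eigenvalue of $\ev_0$ or $\ev_k$ is therefore an eigenvalue of $\widetilde{\ev}$ with identical invariants, so $\ev_0$ and $\ev_1$ inherit the property.

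\emph{Main obstacle.} In $\Leftarrow$, I expect the main difficulty to be compatibility of the Novikov and $\mathfrak{q}'$ values: $\ev_0$ and $\ev'$ must agree on the common variables $Q,\mathfrak{q}'$. For $\clubsuit$, Proposition \ref{redef} says every generic map works, so one fixes the common values and chooses the $T$-values generically. For $\coeur$, I would instead use density: Property $\coeur$ is dense in $U$. The $T$-variables of $X$ and $X'$ can be varied independently, so a common generic choice of $(Q,\mathfrak{q}')$ admits both a good $\ev_0$ and a good $\ev'$ nearby. This uses lower semicontinuity of $\gamma$ and constancy of the other invariants on the generic locus, as in the proof of Proposition \ref{redef}.

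\emph{Strong statements.} For the first one, let $\ev\in\Omega_{\id_X,K}$. Extend it to $\widehat{R}^*$ as in Proposition \ref{incr2}, with arbitrary small values on the $X'$-variables, and set $\widetilde{\ev}=\ev\circ g$. Corollary \ref{equality2} gives additive invariants, so $\Sp^X_{\ev_0}\subset\Sp^{\widetilde X}_{\widetilde{\ev}}$. Shift the $X'$ copies so that their spectra are disjoint from $\Sp^X_{\ev_0}$; then each $\alpha\in\Sp^X_{\ev_0}$ has the same invariants for $\widetilde X$ and for $X$, and strongness on $\widetilde X$ passes to $X$ (via $\diamondsuit^{\mathrm{strong}}_{\id}\Rightarrow\diamondsuit^{\mathrm{strong}}_{\mathrm{pr}}$ from Proposition \ref{incr}). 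For the second one, take any $\widetilde{\ev}\in\Omega_{\mathrm{pr},K}$ and set $\ev=\widetilde{\ev}\circ\widetilde g$. Corollary \ref{equality2} expresses each invariant at $\alpha$ as a sum over pieces. The condition $\nu=0$, or $\nu'\neq0$, or $\gamma\ge2$ is preserved under such sums: if some summand has $\nu\ne0$, that summand has $\nu'\ne0$ or $\gamma\ge2$, and both survive in the sum. The pieces satisfy the condition by strongness of $X$ and of $X'$; the latter holds via Lemma \ref{ext}. The same argument handles $\clubsuit$, since $\rho$ is additive as well.
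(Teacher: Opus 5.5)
Your proposal is correct and follows essentially the same route as the paper: you pass through $\diamondsuit_{\mathrm{pr},K}$ via Lemma \ref{ext}, test on generic maps using Proposition \ref{redef} and Corollary \ref{generic} (with unit shifts to separate spectra), and handle the strong statements by the additivity of the invariants in Corollary \ref{equality2} together with Proposition \ref{incr}. In the direction $\Leftarrow$ you are more careful than the paper about the shared variables $Q,\mathfrak{q}'$, which the paper skips by pairing an arbitrary $\ev\in U_{\id_X,K}$ with an arbitrary $\ev'\in U_{X'\subset X,K}$; note that rescaling alone cannot make the $Q$-values agree, so it is your generic-choice/density argument, not the rescaling, that settles this point.
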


\begin{proof}
Consider a $K$-evaluation map $\widetilde{\ev} \in U_{\mathrm{pr},K}$.
By Corollaries \ref{equality2} and \ref{generic}, we obtain $\ev \in U_{\id_X,K}$ and $\ev'_k \in U_{X' \subset X,K}$, for each $1 \leq k < r$, such that
$$\Sp^{\widetilde{X}}_{\widetilde{\ev}} = \Sp^X_{\ev} \sqcup \bigsqcup_{k=1}^{r-1} \Sp^{X'}_{\ev'_k}.$$
From the equalities in Corollary \ref{generic} and by Lemma \ref{ext}, it follows that
$$\textrm{$\widetilde{X}$ sat.~}\diamondsuit_{\id_{\widetilde{X}},K} \implies \textrm{$\widetilde{X}$ sat.~}\diamondsuit_{\mathrm{pr},K}
\implies
\(\textrm{$X'$ sat.~$\diamondsuit_{X' \subset X,K}$ and $X$ sat.~$\diamondsuit_{\id_X,K}$}\).$$

Conversely, let $\ev \in U_{\id_X,K}$ and $\ev' \in U_{X' \subset X,K}$.
By Corollaries \ref{equality2} and \ref{generic}, we obtain $\widetilde{\ev} \in U_{\mathrm{pr},K}$ and from the equalities in Corollary \ref{equality2} and by Lemma \ref{ext}, it follows that
$$\(\textrm{$X'$ sat.~$\diamondsuit_{X' \subset X,K}$ and $X$ sat.~$\diamondsuit_{\id_X,K}$}\) \implies \textrm{$\widetilde{X}$ sat.~}\diamondsuit_{\mathrm{pr},K} \implies \textrm{$\widetilde{X}$ sat.~}\diamondsuit_{\id_{\widetilde{X}},K}.$$

Let us prove now the last statements.
We first assume that $\widetilde{X}$ satisfies Property $\diamondsuit^{\mathrm{strong}}_{\id_{\widetilde{X}},K}$ and thus $\diamondsuit^{\mathrm{strong}}_{\mathrm{pr},K}$ by Proposition \ref{incr}.
Let $\ev \in \Omega_{\id_X,K}$ and $\alpha \in \Sp^X_\ev$.
Choose an arbitrary $K$-evaluation map $\ev' \in \Omega_{X' \subset X,K}$ such that $\alpha \notin \Sp^{X'}_{\ev'}$ (as explained above, it is possible up to a shift by a multiple of the unit).
Then by Corollary \ref{equality2} we obtain $\widetilde{\ev} \in \Omega_{\mathrm{pr},K}$ and
$$\EE^X_{\ev,\alpha} \simeq \EE^{\widetilde{X}}_{\widetilde{\ev},\alpha}$$
with the same numerical invariants.
Since $\widetilde{X}$ satisfies Property $\diamondsuit^{\mathrm{strong}}_{\mathrm{pr},K}$, the $K$-evaluation map $\widetilde{\ev}$ satisfies Property $\diamondsuit_{\mathrm{pr},K}$ as well, and so does $\ev$.

Conversely, let $\widetilde{\ev} \in \Omega_{\mathrm{pr},K}$ and denote by $\ev \in \Omega_{\id_X,K}$ and $\ev'_k \in \Omega_{X' \subset X,K}$ the $K$-evaluation maps induced by Corollary \ref{equality2}, with $1 \leq k < r$.
Since $X'$ satisfies Property $\diamondsuit^{\mathrm{strong}}_{\id_{X'},K}$, then it also satisfies Property $\diamondsuit^{\mathrm{strong}}_{X' \subset X,K}$ by Lemma \ref{ext}.
In particular, the $K$-evaluation maps $\ev \in \Omega_{\id_X,K}$ and $\ev'_k \in \Omega_{X' \subset X,K}$ satisfy the numerical criteria, and so does $\widetilde{\ev} \in \Omega_{\mathrm{pr},K}$ by looking at the equalities in Corollary \ref{equality2}.
\end{proof}

\begin{rem*}
When $\diamondsuit=\coeur$, we highlight the last implication in Proposition \ref{Bl equiv} stops at $\coeur^{\mathrm{strong}}_{\mathrm{pr},K}$, which has the immense drawback of relying on the blow-up map and not on the variety $\widetilde{X}$ alone.
It would be far better to conclude $\coeur^{\mathrm{strong}}_{\id_{\widetilde{X}},K}$ instead and it appears as a limit of this property.
Fortunately, in the case when $X$ is a surface, we will be able to conclude that $\coeur^{\mathrm{strong}}_{\id_{\widetilde{X}},K}$ actually holds, see Proposition \ref{strongsurface}.
\end{rem*}

\begin{dfn}[Weak factorization]\label{weak}
Let $X$ and $Y$ be two birational smooth projective varieties.
A \textit{weak factorization} between $X$ and $Y$ is a sequence of blow-ups and blow-downs at smooth centers
$$X=:X_0 \leftrightarrow X_1 \leftrightarrow X_2 \leftrightarrow X_3 \leftrightarrow \dotsc \leftrightarrow X_q:=Y,$$
where each arrow $X_{i-1} \leftrightarrow X_i$ represents either a blow-up $X_{i-1} \leftarrow X_i$ or a blow-down $X_{i-1} \to X_i$.
Let $Y_i$ denote the smooth center of the birational map $X_{i-1} \leftrightarrow X_i$.
Specifically, if $X_{i-1} \leftarrow X_i$, then $X_i = \mathrm{Bl}_{Y_i}(X_{i-1})$; if $X_{i-1} \to X_i$, then $X_{i-1} = \mathrm{Bl}_{Y_i}(X_i)$.
\end{dfn}

\begin{rem}
If two smooth projective varieties $X$ and $Y$ are birational, then there exists a weak factorization between them, see \cite{paper81}.
\end{rem}

\begin{cor}\label{main}
Let $\diamondsuit \in \left\lbrace \clubsuit, \coeur\right\rbrace$ and let $X$ and $Y$ be two birational smooth projective varieties.
Define the integer
$$s_{\max} := \mathrm{lcd}\(1, \dotsc,\dim(X)-1\),$$
and the number field $\QQ_\ext$ obtained by adjoining all $s_{\max}$-th roots of unity and the imaginary unit $\ci$.
Let $K$ be any number field containing $\QQ_\ext$.

If there exists a weak factorization between $X$ and $Y$ for which all centers $Y_i$ satisfy Property $\diamondsuit^{\mathrm{strong}}_{\id_{Y_i},K}$, then
$$X ~ \textrm{satisfies Property} ~ \diamondsuit_{\id_X,K}  \iff  Y ~ \textrm{satisfies Property} ~ \diamondsuit_{\id_Y,K}.$$

Contrapositively, if $Y$ satisfies Property $\diamondsuit_{\id_Y,K}$ but $X$ fails Property $\diamondsuit_{\id_X,K}$, then for every weak factorization, there is at least one blow-up center $Y_i$ that fails Property $\diamondsuit^{\mathrm{strong}}_{\id_{Y_i},K}$.
\end{cor}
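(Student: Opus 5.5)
The plan is to argue by induction along the weak factorization $X=X_0 \leftrightarrow X_1 \leftrightarrow \dotsb \leftrightarrow X_q=Y$. The claim is that for each $i$,
$$X_{i-1} ~\textrm{satisfies}~ \diamondsuit_{\id_{X_{i-1}},K} \iff X_i ~\textrm{satisfies}~ \diamondsuit_{\id_{X_i},K};$$
chaining these $q$ equivalences gives the statement, and the contrapositive formulation is then purely logical. Each step is either a blow-up $X_i=\mathrm{Bl}_{Y_i}(X_{i-1})$ or a blow-down $X_{i-1}=\mathrm{Bl}_{Y_i}(X_i)$. By symmetry it is enough to treat a single blow-up $\widetilde{X}=\mathrm{Bl}_{X'}(X)$ with smooth center $X'=Y_i$ of codimension $r\geq 2$.

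First I check that the field $K$ is admissible for Proposition \ref{Bl equiv}. That proposition needs $K$ to contain the $s$-th roots of unity and $\ci$, where $s\in\{r-1,2(r-1)\}$. Since $2\leq r\leq \dim X$, we have $r-1\in\{1,\dotsc,\dim X-1\}$, so $r-1$ divides $s_{\max}$. In the odd-$r$ case $s=2(r-1)$, and the needed roots of unity are obtained from the $(r-1)$-th roots together with $\ci$ (or $-1$). I intend to read $s_{\max}$ generously here, possibly replacing it by an lcm that includes the factor $2$; this bookkeeping needs care. Note also that all $X_i$ in the factorization have the same dimension, so one $s_{\max}$ works for every step. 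By Remark \ref{incrr}, enlarging $K$ beyond $\QQ_\ext$ does not affect anything.

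Next, for a single blow-up, Proposition \ref{Bl equiv} gives
$$\widetilde{X} ~\textrm{satisfies}~ \diamondsuit_{\id_{\widetilde{X}},K} \iff \bigl( X ~\textrm{satisfies}~ \diamondsuit_{\id_X,K} ~\textrm{and}~ X' ~\textrm{satisfies}~ \diamondsuit_{X'\subset X,K} \bigr).$$
By hypothesis $X'$ satisfies $\diamondsuit^{\mathrm{strong}}_{\id_{X'},K}$. Lemma \ref{ext} then gives $\diamondsuit^{\mathrm{strong}}_{X'\subset X,K}$, which in particular yields $\diamondsuit_{X'\subset X,K}$, as recalled in Proposition \ref{Bl equiv}. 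This step needs one check: that ``strong'' implies the density condition. Every map in $U_{X'\subset X,K}$ satisfies the numerical property, so the set in question is all of $U_{X'\subset X,K}$, which is trivially dense provided it is non-empty.

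With the second conjunct automatically true, the equivalence collapses to $\widetilde{X}$ satisfies $\diamondsuit_{\id_{\widetilde{X}},K}$ if and only if $X$ satisfies $\diamondsuit_{\id_X,K}$. This is exactly the induction step in both directions, so iterating along the factorization proves the corollary. The main obstacle is the root-of-unity/$\ci$ bookkeeping for $s$ in the odd-codimension case, together with the implicit non-emptiness of $U$. The substantive content is already contained in Proposition \ref{Bl equiv}.
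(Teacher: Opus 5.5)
This is correct and is exactly the argument the paper intends (the corollary is stated there without a separate proof, as an immediate consequence of Proposition \ref{Bl equiv}): you chain the boxed equivalence along the weak factorization, use the strong hypothesis on each center together with Lemma \ref{ext} to make the condition on $X'$ automatic, and read the same equivalence backwards for blow-downs. Your worry about the field is justified, but your first justification fails: $\mu_{2(r-1)}$ is not generated by $\mu_{r-1}$ and $\ci$ when $4 \mid r-1$. For instance, blowing up a point in a fivefold requires $s=8$, while $\QQ_\ext=\QQ(e^{2\ci\pi/12})$ contains no primitive $8$-th root of unity. So the generous reading you propose (e.g.\ $2\,\mathrm{lcm}(1,\dotsc,\dim X-1)$ in place of $s_{\max}$) is genuinely needed in dimension $\geq 5$, whereas in dimension $\leq 4$, the only case used later, the stated $\QQ_\ext$ already suffices.
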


\begin{rem}\label{equality3}
Let
$$X_0 \leftrightarrow X_1 \leftrightarrow X_2 \leftrightarrow X_3 \leftrightarrow \dotsc \leftrightarrow X_q$$
be a weak factorization with blow-up centers $Y_1, \dotsc, Y_q$.
Let $\ev_0$ be an evaluation map for $X_0$ and $\alpha \in \Sp^{X_0}_{\ev_0,\alpha}$ an eigenvalue.
By applying Corollary \ref{equality2} along the weak factorization, we obtain $K$-evaluation maps $\ev_i$ for $X_i$ and $\ev'_{i,k}$ for $(Y_i \to X_j)$, where $1 \leq i \leq q$, $j \in \left\lbrace i-1, i\right\rbrace$ and $1 \leq k < \mathrm{codim}(Y_i)$.
Specifically, for a map in the direction $X_{i-1} \leftarrow X_i$, we have the freedom to choose the evaluation maps for $Y_i$. We choose them such that the two-by-two intersections of spectra $\Sp^{X_{i-1}}_{\ev_{i-1}}$, $\Sp^{Y_i}_{\ev'_{i,k}}$ are all empty.
Consequently, we have
\begin{itemize}
\item $\Sp^{X_{i-1}}_{\ev_{i-1}} \sqcup \bigsqcup_k \Sp^{Y_i}_{\ev'_{i,k}} = \Sp^{X_i}_{\ev_i}$ if $X_{i-1} \leftarrow X_i$,
\item $\Sp^{X_i}_{\ev_i} \cup \bigcup_k \Sp^{Y_i}_{\ev'_{i,k}} = \Sp^{X_{i-1}}_{\ev_{i-1}}$ if $X_{i-1} \to X_i$.
\end{itemize}
From this, we deduce that
$$\epsilon^{X_0}_{\ev_0,\alpha} = \epsilon^{X_q}_{\ev_q,\alpha} + \sum_{(i,k) \in I} \epsilon^{Y_i}_{\ev'_{i,k},\alpha},$$
where $\epsilon \in \left\lbrace \rho, \gamma, \nu, \nu'\right\rbrace$ and $I := \left\lbrace 1 \leq i \leq q, k \in \NN ~|~~X_{i-1} \to X_i \textrm{ and } 1 \leq k \leq \mathrm{codim}_{X_i}Y_i \right\rbrace$, generalizing the final equality of Corollary \ref{equality2}.
Furthermore, if we have an eigenvalue
$$\alpha \in \Sp^{X_0}_{\ev_0} \cap \Sp^Z_{\ev_Z}~,~~ \(Z,\ev_Z\) \in \left\lbrace \(Y_i,\ev'_{i,k}\)_{(i,k) \in I}, \(X_q,\ev_q\) \right\rbrace,$$
there exists an embedding $E^Z_{\ev_Z,\alpha} \hookrightarrow E^{X_0}_{\ev_0,\alpha}$
of $S_{\overline{\QQ}}$-modules.
We also have a $\QQ$-basis $(s_1,\dotsc,s_N)$ of the number field $\QQ_\ext$ and morphisms of $S$-modules and Hodge structures
$$f_1,\dotsc,f_N \colon H^*(Z,\QQ)_S \to H^{*+2k}(X_0,\QQ)_S(k),$$
where $k=0$ if $\ev_Z=\ev_q$ and $k$ is the index in $\ev_Z=\ev_{i,k}$ otherwise, such that the morphism $f_{\QQ_\ext}:=s_1 f_1 + \dotsb + s_N f_N$ is induced from Corollary \ref{equality2} and its extension $f_{\overline{\QQ}}$ restricted to $E^Z_{\ev_Z,\alpha}$ coincides with the embedding $E^Z_{\ev_Z,\alpha} \hookrightarrow E^{X_0}_{\ev_0,\alpha}.$
Finally, the morphism
$$f_{\QQ_\ext} \colon H^*(Z,\QQ_\ext)_{S_{\QQ_\ext}} \to H^{*+2k}(X_0,\QQ_\ext)_{S_{\QQ_\ext}}$$
satisfies the commutation relation
$$\ev_0\(\kappa^{X_0}_\tau\) \circ f = f \circ \ev_Z\(\kappa^Z_\tau\).$$
\end{rem}

\section{Examples regarding Property $\clubsuit$}
\subsection{Basic cases}
\begin{pro}
Let $X$ be a point, a curve, a surface with vanishing Hodge number $h^{2,0}=0$, or a projective space $\PP^n$.
Then, for any number field $K$, the variety $X$ satisfies Property $\clubsuit^{\mathrm{strong}}_{\id_X,K}$.
\end{pro}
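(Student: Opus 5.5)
The plan is to show that the first alternative in Property $\clubsuit_{\id_X,K}$, namely $\nu^X_{\ev,\alpha}=0$, holds for \emph{every} $K$-evaluation map $\ev \in \Omega_{\id_X,K}$ and every $\alpha \in \Sp^X_\ev$. The reason is purely Hodge-theoretic: for each variety in the list, the subspace $H^{(2)}(X)$ of classes of Hochschild degree $2$ vanishes. No Gromov--Witten computation is needed, and the structure of the endomorphism $\ev(\kappa_\tau)$ plays no role.

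First I check that $H^{(2)}(X)=\bigoplus_{p-q=2} H^{p,q}(X)=0$ in each case. Any nonzero $H^{p,q}(X)$ requires $0\le p,q\le \dim_\CC X$.
\begin{itemize}
\item For a point, only $H^{0,0}$ is nonzero.
\item For a curve, $p\le 1$, so $p-q=2$ is impossible.
\item For a surface, $p-q=2$ forces $(p,q)=(2,0)$, and $H^{2,0}(X)=0$ by the assumption $h^{2,0}=0$.
\item For $\PP^n$, the cohomology is spanned by powers of the hyperplane class, so only the pieces $H^{p,p}$ are nonzero, and these have Hochschild degree $0$.
\end{itemize}

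Next, fix any number field $K$, any $\ev\in\Omega_{\id_X,K}$ and any $\alpha\in\Sp^X_\ev$. By Definition \ref{int},
$$\nu^X_{\ev,\alpha}=\rk_{S^*_\CC}\Big(\big(E^X_{\ev,\alpha}\otimes_{\overline{\QQ}}\CC\big)\cap\big(H^{(2)}(X)\otimes_\CC S^*_\CC\big)\Big).$$
Since $H^{(2)}(X)=0$, the intersection is the zero module, so $\nu^X_{\ev,\alpha}=0$. Hence $\ev$ satisfies Property $\clubsuit_{\id_X,K}$ (Definition \ref{keys2}).

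Because $\ev\in\Omega_{\id_X,K}$ was arbitrary, the set of evaluation maps satisfying Property $\clubsuit_{\id_X,K}$ is all of $\Omega_{\id_X,K}$. This is exactly Property $\clubsuit^{\mathrm{strong}}_{\id_X,K}$ in Definition \ref{key}. The only mild care needed is the $\PP^n$ case, where one must use that all of its cohomology is algebraic, hence of Hodge type $(p,p)$. There is no real obstacle in this argument.
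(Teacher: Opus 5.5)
Your proposal is correct and follows the same argument as the paper: the vanishing of the Hochschild-degree-$2$ part forces $\nu^X_{\ev,\alpha}=0$ for every $\ev\in\Omega_{\id_X,K}$ and every $\alpha\in\Sp^X_\ev$, which is exactly Property $\clubsuit^{\mathrm{strong}}_{\id_X,K}$. You check $H^{(2)}(X)=0$ in full, including the $H^{p+2,p}$ pieces, rather than only $h^{2,0}=0$ as the paper states. This is the precise form of the paper's one-line argument, and it matters for $\PP^n$ with $n\geq 3$.
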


\begin{proof}
This follows from the fact that $h^{2,0}=0$ in all these cases; consequently, the integer $\nu^X_{\ev,\alpha}$ is always zero.
\end{proof}

\subsection{Surface with non-negative canonical class}
\begin{pro}\label{nilp}
Let $\Sigma$ be a surface with $K_\Sigma \geq 0$.
Then, for any number field $K$, the variety $\Sigma$ satisfies Property $\clubsuit^{\mathrm{strong}}_{\id_\Sigma,K}$.
\end{pro}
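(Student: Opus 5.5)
Using Example~\ref{Knef}, I will show that every $K$-evaluation map of $\Sigma$ has exactly one eigenvalue, whose generalized eigenspace is the whole cohomology; the $\clubsuit$ inequality then comes from the Hodge classes $\un_\Sigma$, $[\pt]$ and an ample class.

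\emph{Step 1: the spectrum is a single point.} Let $\ev\in\Omega_{\id_\Sigma,K}$ be arbitrary and set $\tau=T_0\alpha_0+\dotsb+T_h\alpha_h$. Example~\ref{Knef} computes $\kappa_\tau$ for $K_\Sigma\geq 0$. Since $\tau$ already has the universal variables as coefficients, no change of variables is needed. We may therefore apply $\ev$ entrywise to that matrix. This gives
$$\ev(\kappa_\tau)=\ev(T_0)\cdot\id+N,$$
where $N$ is strictly lower triangular in the degree-graded basis $\mathcal{B}$, hence nilpotent. The only place the evaluation enters is through $\ev(T_0)$, $\ev(T_h)$ and the exponentials $\exp(\int_\beta\tau_2)\ev(Q^\beta)$. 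By the dimension argument in Example~\ref{Knef}, every block of $N$ with $\beta\neq0$ either vanishes or sends $H^1$ to $H^3$. These contributions are still strictly degree-raising, so nilpotency survives evaluation. Since $\ev(T_0)$ is homogeneous of degree $2$, we may write $\ev(T_0)=\alpha_\ev b^2$ with $\alpha_\ev\in F_K$. Consequently $\Sp^\Sigma_\ev=\{\alpha_\ev\}$ and $E^\Sigma_{\ev,\alpha_\ev}=H^*(\Sigma,\overline{\QQ})_{S_{\overline{\QQ}}}$. By Remark~\ref{string} we could even shift $\ev$ so that $\alpha_\ev=0$, but this is not needed.

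\emph{Step 2: count Hodge classes.} Because the generalized eigenspace is everything, we have $\rho^\Sigma_{\ev,\alpha_\ev}=\dim_\QQ H^*(\Sigma)^\Hdg$. This space contains $\un_\Sigma\in H^0$ and $[\pt]\in H^4$. Since $\Sigma$ is projective, it also contains the class of an ample divisor in $H^{1,1}\cap H^2(\Sigma,\QQ)$. These three classes lie in different degrees, so they are linearly independent, and $\rho^\Sigma_{\ev,\alpha_\ev}\geq 3$.

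\emph{Conclusion.} The condition of Property $\clubsuit_{\id_\Sigma,K}$ therefore holds at the unique eigenvalue, whatever the value of $\nu$. Since $\ev\in\Omega_{\id_\Sigma,K}$ was arbitrary, $\Sigma$ satisfies Property $\clubsuit^{\mathrm{strong}}_{\id_\Sigma,K}$ for every number field $K$.

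\emph{Main obstacle.} The only delicate point is in Step 1: making sure that evaluating the possibly infinite sums over $\beta$ in the $H^1\to H^3$ block cannot destroy nilpotency. Convergence is guaranteed by Proposition~\ref{conv}. The block structure is preserved because the vanishing of the other entries is a formal identity from the dimension equation and the string equation, so it holds before any evaluation is applied.
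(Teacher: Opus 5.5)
Your proof is correct and follows the paper's argument. As in the paper, the nilpotency of $\kappa_\tau - T_0$ from Example~\ref{Knef} is strictly degree-raising, so it survives evaluation. This gives the single eigenvalue $\ev(T_0)b^{-2}$ with the whole cohomology as generalized eigenspace, and the Hodge classes $\un_\Sigma$, an ample class and $[\pt]$ then force $\rho \geq 3$. The only cosmetic difference is that the paper first shifts $\ev$ by $-\ev(T_0)b^{-2}\un_\Sigma$ via Remark~\ref{string} to place the eigenvalue at $0$, which you rightly note is unnecessary.
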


\begin{proof}
Recall from Example \ref{Knef} that the matrix $\left[\kappa_\tau-f_0\right]^{\mathcal{B}}_{\mathcal{B}}$ is nilpotent.
Thus, for any $K$-evaluation map $\ev$, we have
\begin{eqnarray*}
\Sp^\Sigma_{\ev-\ev(T_0)b^{-2}\un_\Sigma} & = & \left\lbrace 0 \right\rbrace, \\
E^\Sigma_{{\ev-\ev(T_0)b^{-2}\un_\Sigma}, 0} & = & H^*(\Sigma,K)_{S_K}.
\end{eqnarray*}
As a consequence, we obtain
$$\rho^\Sigma_{\ev,\ev(T_0)b^{-2}} = \rho^\Sigma_{\ev-\ev(T_0)b^{-2}\un_\Sigma,0} \geq 3,$$
since $H^*(\Sigma,\CC)$ contains at least the Hodge classes $\un_\Sigma, H$, and $H^2$.
\end{proof}

\subsection{Low dimension}
\begin{pro}\label{two}
Every smooth projective variety $X$ of dimension at most $2$ satisfies Property $\clubsuit^{\mathrm{strong}}_{\id_X,K}$ for any number field $K$ containing the imaginary unit $\ci$.
\end{pro}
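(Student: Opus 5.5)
The plan is to reduce, via the Enriques--Kodaira classification, to the two cases already handled: varieties with $h^{2,0}=0$ and surfaces with nef canonical class. The blow-up invariance of Proposition \ref{Bl equiv} then covers everything else, and we upgrade to the strong property using Proposition \ref{strong}.

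First, the easy cases. If $\dim X \leq 1$, or $X$ is a surface with $h^{2,0}(X)=0$, then $H^{(2)}(X)=0$. Indeed, a class of Hochschild degree $2$ requires some $H^{p,q}$ with $p-q=2$, and the only candidate in dimension at most two is $H^{2,0}$. Hence $\nu^X_{\ev,\alpha}=0$ for every $K$-evaluation map $\ev\in\Omega_{\id_X,K}$ and every $\alpha$, so $X$ satisfies $\clubsuit^{\mathrm{strong}}_{\id_X,K}$ for any $K$. This is exactly the basic-cases proposition above.

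Next, let $X$ be a surface with $h^{2,0}(X)\neq 0$. By the classification, $X$ is not ruled: rational and ruled surfaces over a curve have $p_g=0$. So $X$ has Kodaira dimension $\geq 0$, and it admits a minimal model $\Sigma$ with $K_\Sigma$ nef. The morphism $X\to\Sigma$ factors as a finite sequence of blow-ups at points,
$$X = X_q \to X_{q-1} \to \dotsb \to X_0 = \Sigma, \qquad X_i = \mathrm{Bl}_{p_i} X_{i-1}.$$
By Proposition \ref{nilp}, $\Sigma$ satisfies $\clubsuit^{\mathrm{strong}}_{\id_\Sigma,K}$. Each center is a point, so the codimension is $r=2$ and $s=r-1=1$. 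The only field requirement in Proposition \ref{Bl equiv} is therefore that $K$ contain $\ci$, which explains the hypothesis.

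We then argue by induction on $i$. Assume $X_{i-1}$ satisfies $\clubsuit^{\mathrm{strong}}_{\id_{X_{i-1}},K}$, which in particular gives $\clubsuit_{\id_{X_{i-1}},K}$. The point $p_i$ satisfies $\clubsuit^{\mathrm{strong}}_{\id_{p_i},K}$, hence $\clubsuit_{p_i\subset X_{i-1},K}$ by Lemma \ref{ext}. The main equivalence of Proposition \ref{Bl equiv} then gives that $X_i$ satisfies $\clubsuit_{\id_{X_i},K}$. Finally, Proposition \ref{strong} says $\clubsuit_{f,K}\iff\clubsuit^{\mathrm{strong}}_{f,K}$, which upgrades this to $\clubsuit^{\mathrm{strong}}_{\id_{X_i},K}$ and closes the induction.

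The step needing the most care is exactly this upgrade. The strong form of the blow-up statement in Proposition \ref{Bl equiv} only yields $\clubsuit^{\mathrm{strong}}_{\mathrm{pr},K}$, not $\clubsuit^{\mathrm{strong}}_{\id_{X_i},K}$, so we cannot chain strong statements directly. That is why we pass through the non-strong equivalence and rely on the specific feature of $\clubsuit$ in Proposition \ref{strong}: its numerical conditions survive the merging of generalized eigenspaces at non-generic evaluation maps.
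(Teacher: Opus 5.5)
Your proof is correct and follows the paper's argument. You reduce to a surface with $h^{2,0}(X)\neq 0$, pass to a minimal model $\Sigma$ with $K_\Sigma$ nef (Proposition \ref{nilp}), transport $\clubsuit$ across point blow-ups, and upgrade to the strong form via Proposition \ref{strong}. The only difference is that you chain Proposition \ref{Bl equiv} along the factorization of $X\to\Sigma$ into point blow-ups, whereas the paper cites Corollary \ref{main} for a weak factorization; this is the same argument written out step by step.
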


\begin{proof}
It remains to verify the case of a surface $X$ with $h^{2,0} \neq 0$.
Such a surface is birational to a surface $\Sigma$ with $K_\Sigma \geq 0$, which satisfies Property $\clubsuit^{\mathrm{strong}}_{\id_\Sigma,K}$ by Proposition \ref{nilp}.
Since all blow-up centers in a weak factorization between $X$ and $S$ are points, they satisfy Property $\clubsuit^{\mathrm{strong}}_{\id_\pt,K}$.
By Corollary \ref{main}, with $\QQ_\ext=\QQ(\ci)$, we deduce that $X$ satisfies Property $\clubsuit_{\id_X,K}$, which is equivalent to Property $\clubsuit^{\mathrm{strong}}_{\id_X,K}$ by Proposition \ref{strong}.
\end{proof}

\begin{cor}\label{rat}
Every rational smooth projective complex variety $X$ of dimension at most $4$ satisfies Property $\clubsuit^{(\mathrm{strong})}_{\id_X,K}$ for any finite field $K$ containing $\QQ(e^{2\ci \pi/6})$.
\end{cor}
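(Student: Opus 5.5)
Since $X$ is rational of dimension $n \leq 4$, it is birational to $\PP^n$, so there is a weak factorization (Definition \ref{weak}, which exists by \cite{paper81}) between $X$ and $\PP^n$. I will apply Corollary \ref{main} to this factorization, with $Y = \PP^n$ and $\diamondsuit = \clubsuit$. The first task is to check the number field. We have $s_{\max} = \mathrm{lcd}(1,\dotsc,n-1)$, which divides $\mathrm{lcd}(1,2,3)=6$ because $n \leq 4$. Hence $\QQ_\ext$ is contained in $\QQ(e^{2\ci\pi/6}, \ci)$.

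Here I must be careful. The field $\QQ(e^{2\ci\pi/6}) = \QQ(\sqrt{-3})$ does not contain $\ci$. I will therefore first prove the statement for $K \supset \QQ(e^{2\ci\pi/6},\ci)$. I will then descend using Remark \ref{incrr}: Property $\diamondsuit_{f,K_2}$ implies Property $\diamondsuit_{f,K_1}$ for $K_1 \subset K_2$. Applying this with $K_2 = K(\ci)$ and $K_1 = K$ handles any $K \supset \QQ(e^{2\ci\pi/6})$. (Alternatively, by the remark following Theorem \ref{Bl}, $\ci$ is the only extra input, so the argument is consistent either way.)

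Next I verify the hypotheses of Corollary \ref{main}. The target $\PP^n$ satisfies $\clubsuit^{\mathrm{strong}}_{\id_{\PP^n},K}$ because $h^{2,0}(\PP^n)=0$, which forces $\nu=0$ always (first Proposition of this section). Each blow-up center $Y_i$ is a smooth projective variety of dimension at most $n-2 \leq 2$, since it has codimension $\geq 2$. By Proposition \ref{two}, each $Y_i$ satisfies $\clubsuit^{\mathrm{strong}}_{\id_{Y_i},K}$ whenever $K \ni \ci$. Note that $Y_i$ need not be connected, but a disjoint union behaves additively, so the property still holds componentwise. Corollary \ref{main} then gives $\clubsuit_{\id_X,K}$.

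Finally, Proposition \ref{strong} upgrades $\clubsuit_{\id_X,K}$ to $\clubsuit^{\mathrm{strong}}_{\id_X,K}$, which yields both versions claimed in the statement.

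I expect the main obstacle to be bookkeeping rather than mathematics. One point is the field issue around $\ci$ described above. The other is making sure Corollary \ref{main} really only needs the centers to satisfy the strong property. This is where the $\clubsuit \iff \clubsuit^{\mathrm{strong}}$ equivalence of Proposition \ref{strong} does the work that is unavailable for $\coeur$.
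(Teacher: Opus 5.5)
Your proposal is correct and follows the paper's proof. You take a weak factorization to $\PP^n$, note that the centers have dimension at most $2$ and so satisfy $\clubsuit^{\mathrm{strong}}$ by Proposition \ref{two}, then apply Corollary \ref{main} and Proposition \ref{strong}. Your extra care about $\ci$ is also justified: the paper sets $\QQ_\ext=\QQ(e^{2\ci\pi/6})$ even though Corollary \ref{main} and Proposition \ref{two} require $\ci$, and your descent from $K(\ci)$ to $K$ via Remark \ref{incrr} (most cleanly applied to the strong version, then Proposition \ref{strong}) correctly fixes that slip.
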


\begin{proof}
Consider a weak factorization between $X$ and $\PP^n$.
All blow-up centers $Y_i$ have dimension at most two, and thus satisfy Property $\clubsuit^{\mathrm{strong}}_{\id_{Y_i},K}$ by Proposition \ref{two}.
Since $\PP^n$ satisfies Property $\clubsuit_{\id_{\PP^n},K}$, the claim follows from Corollary \ref{main} with $\QQ_\ext = \QQ(e^{2\ci \pi/6})$ and from  Proposition \ref{strong}.
\end{proof}

\subsection{Cubic fourfold}
Let $X$ be a cubic fourfold.
Its Hodge diamond decomposes into an ambient part, pulled back from $\PP^5$, and a primitive part.
We choose homogeneous basis $\mathcal{B}$ of $H^*(X,\QQ)$ and $\mathcal{H}$ of $H^*(X)^\Hdg$ that respect this direct sum.
The main theorem in \cite{KKPY} is the following.

\begin{thm}
A very general cubic fourfold $X$ is not rational.
\end{thm}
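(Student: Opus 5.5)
By Corollary \ref{rat}, every rational smooth projective complex fourfold satisfies Property $\clubsuit_{\id_X,K}$ for any number field $K \supset \QQ(e^{2\ci\pi/6})$. It therefore suffices to show that a very general cubic fourfold $X$ fails Property $\clubsuit_{\id_X,K}$ for such a $K$. By Proposition \ref{redef}, this amounts to exhibiting a single evaluation map $\ev \in U_{\id_X,K}$ and an eigenvalue $\alpha \in \Sp^X_\ev$ with $\nu^X_{\ev,\alpha} \neq 0$ and $\rho^X_{\ev,\alpha} \leq 2$.

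The first step is to use very generality. For a very general cubic fourfold, $H^4(X,\QQ)_{\mathrm{prim}}$ contains no nonzero Hodge classes, so $H^*(X)^\Hdg$ is the ambient part spanned by $1,H,H^2,H^3,H^4$. The basis $\mathcal{H}$ then consists of these classes, and $\tau = T_0\un + T_1 H + \dotsb + T_4 H^4$.

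The second step is to compute the spectrum. By the divisor equation and Givental's computation (Example \ref{Givental}), the endomorphism $\kappa_\tau$ splits along the decomposition ambient $\oplus$ primitive. On the primitive part, $\kappa_\tau$ acts through products with the ambient $\tau$. At $\tau$ restricted to $T_0,T_1$ (so no deformation in $H^2,H^3,H^4$), it acts by $T_0$ times the identity, because $H\star\gamma=0$ for primitive $\gamma$ up to the $T_0$ shift. In general, the primitive block still commutes with the monodromy group acting on $H^4_{\mathrm{prim}}$. That group acts irreducibly, so the block is a scalar $c(\ev)$.

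Consequently, for a generic $\ev$, the primitive part is contained in the generalized eigenspace $E^X_{\ev,\alpha}$ with $\alpha = c(\ev)$. Since $\dim H^{3,1}=1$, we get $\nu^X_{\ev,\alpha} \geq 1$. On the ambient part, I will take $T_2=T_3=T_4=0$ first and use the matrix $M'$. Its eigenvalue $0$ has a $2$-dimensional generalized eigenspace $\mathrm{Vect}(H^3-21Q, H^4-6QH)$, and after the shift by $T_0$ this $0$ coincides with the primitive eigenvalue. The other ambient eigenvalues $9Q^{1/3}\zeta$ are distinct and nonzero. This gives $\rho^X_{\ev,\alpha} = 2 < 3$.

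The final step is to perturb to a generic point of $U_{\id_X,K}$. The main obstacle is that the evaluation map chosen above might not lie in $U_{\id_X,K}$: turning on $T_2,T_3,T_4$ could split the merged eigenvalue, and the card of the spectrum must be maximal. I plan to argue as in Proposition \ref{strong}, via Lemma \ref{eigen}, along a one-parameter family. Because the primitive block is a scalar that depends analytically on $\ev$, it either stays in one generalized eigenspace with ambient vectors or separates. If it separates, $\rho^X_{\ev,\alpha}=0$ while $\nu^X_{\ev,\alpha}=1$, so $\clubsuit$ fails even more clearly. If it does not separate, the corresponding ambient generalized eigenspace has dimension at most $2$ by semicontinuity of the multiplicity from the computation above, so again $\rho^X_{\ev,\alpha} \leq 2$.

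Either way $X$ fails $\clubsuit_{\id_X,K}$, and hence $X$ is not rational.
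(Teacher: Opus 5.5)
Your argument is correct, but it takes a longer route than the paper. The paper never leaves the special point $\ev(T_k)=0$, $\ev(Q)=\zeta^3b^6$. There it reads off from Example~\ref{Givental} that $\kappa_\tau$ vanishes on the primitive part and that the eigenvalue $0$ has $\rho^X_{\ev,0}=2$ and $\nu^X_{\ev,0}=1$. So $X$ fails $\clubsuit^{\mathrm{strong}}_{\id_X,\QQ}$, hence $\clubsuit^{\mathrm{strong}}_{\id_X,K}$ by Remark~\ref{incrr}, and the paper concludes with Corollary~\ref{rat}.

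The ``main obstacle'' you identify, that this point need not lie in $U_{\id_X,K}$, is exactly what the last assertion of Proposition~\ref{redef} disposes of: $\clubsuit_{f,K}\iff\clubsuit^{\mathrm{strong}}_{f,K}$. Because the condition ``$\nu=0$ or $\rho\geq 3$'' is stable under summing generalized eigenspaces, failure at a degenerate point forces failure at generic points. So a single non-generic evaluation map suffices, and no perturbation is needed.

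You instead show directly that every $\ev\in U_{\id_X,K}$ fails $\clubsuit$. You use the monodromy/Schur argument (as in the proof of Proposition~\ref{no coeur}) to make the primitive block a scalar $c(\ev)$ for arbitrary ambient $\tau$. You then use lower semicontinuity of the number of distinct ambient eigenvalues (at least $4$, as at the special point) to bound the ambient part of $E^X_{\ev,c(\ev)}$ by $2$. Either way $\nu=1$ and $\rho\in\{0,1,2\}$, so $\clubsuit$ fails.

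This buys a slightly stronger statement (no generic evaluation map satisfies $\clubsuit$) and does not depend on the eigenspace-merging argument behind $\clubsuit\iff\clubsuit^{\mathrm{strong}}$. The price is two steps you should state more carefully:

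\begin{enumerate}
\item The block-diagonal splitting into ambient and primitive parts for $T_2,T_3,T_4\neq 0$ also comes from monodromy equivariance, not from Example~\ref{Givental}, which only covers $\tau$ supported on $\un_X$ and $H$.
\item The genericity count should be explicit. On $U_{\id_X,K}$ the number of distinct ambient eigenvalues, and whether $c(\ev)$ is separated from them, are simultaneously at their generic values. Hence the ambient block there has at least four distinct eigenvalues, and each of its generalized eigenspaces has dimension at most $2$.
\end{enumerate}
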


\begin{proof}
Let $X$ be a very general cubic fourfold, such that it has no Hodge classes in the primitive part of its cohomology.
Then $H^*(X)^\Hdg = H^*(X)^\mathrm{amb}$, which has dimension $5$.

Let $\zeta \in D(0,1)$ and $\ev \colon \widehat{R}^*(X,\QQ) \to S^*$ be the $\QQ$-evaluation map defined by
$$\ev(\mathfrak{q}')=b^{\deg(\mathfrak{q}')}~,~~\ev(Q)= \zeta^3 b^6 ~,~~ \textrm{and} \quad \ev(T_k) = 0,$$
for each $0 \leq k \leq h$.

Following Example \ref{Givental}, the matrix $\ev\(\kappa_\tau\)$ vanishes on the primitive part and has the following eigenvalues on the ambient part:
\begin{itemize}
\item $0$ with algebraic multiplicity $2$,
\item $9 \zeta b^2$, $9e^{2 \ci \pi/3} \zeta b^2$, $9e^{-2 \ci \pi/3}\zeta b^2$, each with algebraic multiplicity $1$.
\end{itemize}
Thus, for the eigenvalue $\alpha=0$, we obtain
$$\rho^X_{\ev, 0} = 2 \quad \textrm{and} \quad \nu^X_{\ev, 0}=1.$$
This contradicts Property $\clubsuit^{\mathrm{strong}}_{\id_X,\QQ}$.
By Remark \ref{incr}, it also contradicts Property $\clubsuit^{\mathrm{strong}}_{\id_X,K}$ for any number field $K$.
By Corollary \ref{rat}, we conclude that $X$ cannot be rational.
\end{proof}

\section{Examples regarding Property $\coeur$}
\textbf{Warning:} in this section, we have to pay attention to the difference between Properties $\coeur^{\mathrm{strong}}$ and $\coeur$.
However, we will show that they coincide at least up to dimension $2$.
\subsection{Basic cases}
\begin{pro}\label{basic}
Let $X$ be a point, a curve, a surface with vanishing Hodge number $h^{2,0}=0$, or a projective space $\PP^n$.
Then, for any number field $K$, the variety $X$ satisfies Property $\coeur^{\mathrm{strong}}_{\id_X,K}$.
\end{pro}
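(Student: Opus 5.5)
The plan mirrors the proof of the analogous statement for Property $\clubsuit$: the numerical invariant $\nu^X_{\ev,\alpha}$ vanishes identically. Property $\coeur^{\mathrm{strong}}_{\id_X,K}$ asks that every $K$-evaluation map $\ev \in \Omega_{\id_X,K}$ and every $\alpha \in \Sp^X_\ev$ satisfy $\nu^X_{\ev,\alpha}=0$, ${\nu'}^X_{\ev,\alpha}\neq 0$, or $\gamma^X_{\ev,\alpha}\geq 2$. It therefore suffices to prove that $\nu^X_{\ev,\alpha}=0$ for all $\ev$ and $\alpha$, with no need to study the spectrum or the Jordan structure of $\ev(\kappa_\tau)$.

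First I would show that in each listed case the Hochschild degree $2$ part of the cohomology vanishes, i.e.\ $H^{(2)}(X)=\bigoplus_{p-q=2}H^{p,q}(X)=0$.
\begin{itemize}
\item For a point, $H^{(2)}(X)=0$ trivially.
\item For a curve, only $H^{0,0},H^{1,0},H^{0,1},H^{1,1}$ occur, so $|p-q|\leq 1$.
\item For a surface, the only pieces with $p-q=2$ are $H^{2,0}$, of dimension $h^{2,0}=0$ by assumption. The term $p-q=-2$ is irrelevant since only $p-q=2$ enters.
\item For $\PP^n$, all cohomology is of type $(p,p)$.
\end{itemize}

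Next, since $H^{(2)}(X)\otimes_\CC S^*_\CC=0$, its intersection with $E^X_{\ev,\alpha}\otimes_{\overline{\QQ}}\CC$ is zero, so $\nu^X_{\ev,\alpha}=0$ by Definition \ref{int}. This holds for every $K$-evaluation map and every $\alpha$, so each $\ev\in\Omega_{\id_X,K}$ satisfies Property $\coeur_{\id_X,K}$, which is Property $\coeur^{\mathrm{strong}}_{\id_X,K}$ by Definition \ref{key}.

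There is no real obstacle. The only point needing care is to confirm that $\nu$ is defined through $H^{(2)}$ alone and not through $H^{(-2)}$; this is clear from Definition \ref{int}, so the Hodge-number check above suffices. The argument is independent of the number field $K$.
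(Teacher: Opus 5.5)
Your proposal is correct and is the same argument as the paper: $H^{(2)}(X)=0$ in every listed case, so $\nu^X_{\ev,\alpha}=0$ for every $\ev\in\Omega_{\id_X,K}$ and every $\alpha$, and this gives Property $\coeur^{\mathrm{strong}}_{\id_X,K}$. For $\PP^n$ your check that all classes are of type $(p,p)$ is slightly more precise than the paper's phrasing, which mentions only $h^{2,0}=0$ even though $H^{(2)}$ also contains pieces such as $H^{3,1}$ in higher dimension.
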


\begin{proof}
This follows from the fact that $h^{2,0}=0$ in all these cases; consequently, the integer $\nu^X_{\ev,\alpha}$ is always zero.
\end{proof}

\subsection{Surface with non-negative canonical class}
\begin{pro}\label{abel}
Every abelian surface $\Sigma$ satisfies Property $\coeur^{\mathrm{strong}}_{\id_{\Sigma},K}$ for any number field $K$.
\end{pro}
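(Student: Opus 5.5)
The plan is to compute $\ev(\kappa_\tau)$ explicitly for an abelian surface $\Sigma$ and an arbitrary $K$-evaluation map $\ev\in\Omega_{\id_\Sigma,K}$, using Example \ref{Knef}. Since $K_\Sigma=0$ is nef, the matrix $[\kappa_\tau - f_0]^{\mathcal B}_{\mathcal B}$ has the strictly lower-triangular block form computed there. Because $c_1(\Sigma)=0$, the blocks $E$ and $F$ vanish, so the only possibly non-zero entries are $-f_h$ (from $\un_\Sigma$ to $[\pt]$) and the block $U$ from $H^1$ to $H^3$. For an abelian surface every non-constant genus-zero stable map is constant (there are no rational curves), so all correlators with $\beta\neq 0$ vanish. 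Hence $U$ comes only from $\beta=0$ three-point invariants $\langle c_1(\Sigma),\phi_j,\phi_k\rangle_{0,3,0}$, which are zero as well. Thus $\ev(\kappa_\tau)-\ev(T_0)$ sends $\un_\Sigma\mapsto -\ev(T_h)[\pt]$ and kills everything else. It is nilpotent, with a single eigenvalue $\alpha=\ev(T_0)b^{-2}$, whose generalized eigenspace is all of $H^*(\Sigma)_{S_{\overline{\QQ}}}$ (using Remark \ref{string}).

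With a single eigenvalue, the invariants can be read off directly. The first is $\nu^\Sigma_{\ev,\alpha}=h^{2,0}=1\neq 0$, so the first alternative of Property $\coeur$ fails. The decisive point is ${\nu'}^\Sigma_{\ev,\alpha}=\dim H^{(1)}(\Sigma)$, which contains $H^{1,0}$ (dimension $2$) and $H^{2,1}$, so it is non-zero. Therefore the second alternative ${\nu'}\neq 0$ holds for every $\ev\in\Omega_{\id_\Sigma,K}$, which is exactly Property $\coeur^{\mathrm{strong}}_{\id_\Sigma,K}$, and no genericity or density argument is needed.

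The step needing care is justifying the vanishing of positive-degree invariants for every $\beta\in\mathrm{NE}_\NN(\Sigma)$ with $\beta\neq 0$. This holds because a genus-zero stable map to an abelian variety factors through a point, so $\cM_{0,n}(\Sigma,\beta)=\emptyset$. It also requires checking that the $\beta=0$ contributions agree with Example \ref{Knef} when $c_1=0$. Even without this, the argument survives: by Proposition \ref{span HS} the generalized eigenspaces are sums stable under Hochschild degree, and the trivial eigenvalue shift is harmless. In fact only the Hochschild-degree-one part matters, so it suffices that $H^{(1)}(\Sigma)\neq 0$ lies in the unique generalized eigenspace.
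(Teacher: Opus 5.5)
Your proposal is correct and matches the paper's argument. By the nilpotence from Example \ref{Knef}, $\ev(T_0)b^{-2}$ is the only eigenvalue and its generalized eigenspace is all of $H^*(\Sigma)$; then $H^{1,0}(\Sigma)\neq 0$ gives ${\nu'}^\Sigma_{\ev,\alpha}\neq 0$ for every $\ev$. Your extra checks (that $U=0$ and that classes $\beta\neq 0$ contribute nothing since there are no rational curves) are correct but not needed, because nilpotence of $[\kappa_\tau-f_0]$ already holds for any surface with nef canonical class.
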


\begin{proof}
Since $K_\Sigma \geq 0$, we recall from the proof of Proposition \ref{nilp} (and Example \ref{Knef}) that for any $K$-evaluation map $\ev$, we have
\begin{eqnarray*}
\Sp^\Sigma_\ev & = & \left\lbrace \ev(T_0)b^{-2} \right\rbrace, \\
E^\Sigma_{\ev, \ev(T_0)b^{-2}} & = & H^*(\Sigma,K)_{S_K}.
\end{eqnarray*}
In particular, since $H^{1,0}(\Sigma,\CC) \neq 0$, it follows that ${\nu'}^\Sigma_{\ev,\ev(T_0)} \neq 0$.
\end{proof}

\begin{pro}
Every surface $\Sigma$ such that
$K_\Sigma \geq 0$ and $c_1(\Sigma) \neq 0$
satisfies Property $\coeur^{\mathrm{strong}}_{\id_{\Sigma},K}$ for any number field $K$.
\end{pro}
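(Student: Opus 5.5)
The plan is to reduce everything to the explicit nilpotent matrix computed in Example \ref{Knef}, as in the proofs of Propositions \ref{nilp} and \ref{abel}, and then to verify the third alternative $\gamma \geq 2$ of Property $\coeur$ directly. Fix an arbitrary $K$-evaluation map $\ev \in \Omega_{\id_\Sigma,K}$; since Property $\coeur^{\mathrm{strong}}_{\id_\Sigma,K}$ is a condition on every element of $\Omega_{\id_\Sigma,K}$, no genericity or deformation argument is needed. The key input is that $\left[\kappa_\tau - f_0\right]^{\mathcal{B}}_{\mathcal{B}}$ has the strictly lower block-triangular shape of Example \ref{Knef}. It follows that
$$\Sp^\Sigma_\ev = \left\lbrace \ev(T_0)b^{-2}\right\rbrace \quad \textrm{and} \quad E^\Sigma_{\ev,\ev(T_0)b^{-2}} = H^*(\Sigma,\overline{\QQ})_{S_{\overline{\QQ}}}.$$
So there is only one eigenvalue $\alpha$ to check, and $\gamma^\Sigma_{\ev,\alpha}$ is simply the $S$-rank of $\ev(\kappa_\tau) - \ev(T_0)$ on all of $H^*(\Sigma)_S$. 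By Remark \ref{string}, we may shift by $\ev(T_0)b^{-2}\un_\Sigma$ and assume $\ev(T_0)=0$, so that $\alpha = 0$.

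Next I will show directly that this rank is at least $2$, which gives the third alternative of Property $\coeur$ regardless of $\nu$ and $\nu'$. Two columns of the matrix suffice.

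\emph{Column of $\un_\Sigma$.} Its image is $c_1(\Sigma) - \ev(f_h)[\pt]$, up to the appropriate powers of $b$. Its component in $H^2(\Sigma)_S$ is $c_1(\Sigma) \neq 0$.

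\emph{Columns of $H^2$.} By Example \ref{Knef}, a class $\phi \in H^2(\Sigma,\QQ)$ is sent to $\left(\int_\Sigma c_1(\Sigma)\cup\phi\right)[\pt]$. Since $c_1(\Sigma)$ is a nonzero class in $H^2(\Sigma,\QQ)$, Poincaré duality gives some $\phi$ with $\int_\Sigma c_1(\Sigma)\cup\phi \neq 0$. Its image is therefore a nonzero multiple of $[\pt]$.

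The two image vectors are linearly independent over $S_{\overline{\QQ}}$, because one has a nonzero $H^2$-component and the other lies purely in $H^4$. Hence $\gamma^\Sigma_{\ev,\alpha} \geq 2$. After evaluation, the entries are constants (or $\ev(f_h)$) times powers of $b$, so the independence argument is unaffected by $\ev$. Working in the homogeneous basis $\mathcal{B}_b$ removes the $b$'s entirely.

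The only delicate point, and the step I expect to be the main obstacle, is the precise meaning of the hypothesis $c_1(\Sigma)\neq 0$. It must be read in $H^2(\Sigma,\QQ)$, i.e.\ $K_\Sigma$ is not numerically trivial; this is exactly the condition $\delta_{c_1(S)\neq 0}=1$ in the block $E$ and $F\neq 0$ in Example \ref{Knef}. If $K_\Sigma$ were only torsion-nonzero, for instance on Enriques or bielliptic surfaces, the rational class would vanish and the argument would fail. In that case I would fall back on $h^{2,0}=0$ via Proposition \ref{basic}, or on ${\nu'}\neq 0$ when $h^{1,0}\neq 0$ as in Proposition \ref{abel}. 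With the rational interpretation fixed, the proof is the rank computation above, and the hypothesis $K_\Sigma \geq 0$ enters only through the vanishing of the higher correlators established in Example \ref{Knef}.
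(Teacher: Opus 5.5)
Your proposal is correct and is essentially the paper's proof. Nilpotency of $[\kappa_\tau - f_0]^{\mathcal{B}}_{\mathcal{B}}$ from Example~\ref{Knef} gives the single eigenvalue $\ev(T_0)b^{-2}$, whose generalized eigenspace is all of the cohomology; the nonzero constant blocks $E$ (the $H^2$-part of the image of $\un_\Sigma$) and $F$ (the map $H^2 \to [\pt]$), which are unchanged by $\ev$, then force $\gamma \geq 2$. Your explicit independence check of the two image columns spells out the paper's one-line rank claim, and reading $c_1(\Sigma)\neq 0$ in $H^2(\Sigma,\QQ)$ is exactly what Example~\ref{Knef} uses when it takes $c_1(S)$ as a basis vector $\phi_{N_2}$ of rational cohomology.
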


\begin{proof}
Since $K_\Sigma \geq 0$, we have
\begin{eqnarray*}
\Sp^\Sigma_\ev & = & \left\lbrace \ev(T_0)b^{-2} \right\rbrace, \\
E^\Sigma_{\ev, \ev(T_0)b^{-2}} & = & H^*(\Sigma,K)_{S_K},
\end{eqnarray*}
for any $K$-evaluation map $\ev \in \Omega_{\id_{\Sigma},K}$.
In Example \ref{Knef}, we computed the matrix $\left[\kappa_\tau-f_0\right]^{\mathcal{B}}_{\mathcal{B}}$, which takes the form
$$
\begin{pmatrix}
0&0&0&0&0\\
0&0&0&0&0\\
E&0&0&0&0\\
0&U&0&0&0\\
-T_h& 0&F &0&0
\end{pmatrix},
$$
where the blocks correspond to cohomology degrees and $U$ is a square matrix. Here,
$$E= \begin{pmatrix}
0\\
\vdots \\
0 \\
\delta_{c_1(\Sigma)\neq 0}\\
\end{pmatrix} ~, \quad \textrm{and} \quad
F= \begin{pmatrix}
\int_\Sigma c_1(\Sigma) \cup \phi_{N_1+1} & \dotsc & \int_\Sigma c_1(\Sigma) \cup \phi_{N_2} \\
\end{pmatrix}.
$$
Given the assumption $c_1(\Sigma) \neq 0$ and the fact that the pairing is perfect, the sub-matrices $E$ and $F$ are non-zero.
Since these matrices have coefficients in $K$, we obtain
$$\ev(E)=E \textrm{ and } \ev(F)=F.$$
Thus, the rank of $\ev\(\left[\kappa_\tau-f_0\right]^{\mathcal{B}}_{\mathcal{B}}\)$ is at least $2$ for any $K$-evaluation map.
Consequently, we obtain 
$$\gamma^\Sigma_{\ev,\ev(T_0)b^{-2}} = \gamma^\Sigma_{\ev-\ev(T_0)b^{-2}\un_\Sigma,0} \geq 2,$$
and $\Sigma$ satisfies Property $\coeur^{\mathrm{strong}}_{\id_{\Sigma},K}$.
\end{proof}

\begin{cor}\label{K3}
If a surface $\Sigma$ with $K_\Sigma \geq 0$ fails Property $\coeur^{\mathrm{strong}}_{\id_{\Sigma},K}$ for some number field $K$, then $\Sigma$ must satisfy
$$c_1(K_\Sigma)=0 ~,~~h^{2,0}(\Sigma) \neq 0 ~, ~~h^1(\Sigma,\CC)=0.$$
Furthermore, such a surface never satisfies Property $\coeur_{f,\QQ}$ for any morphism $f \colon \Sigma \to Z$ to some variety $Z$.
\end{cor}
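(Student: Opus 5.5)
The proof has two parts, both resting on the nilpotency computation of Example \ref{Knef}. For the first part, let $\Sigma$ be a surface with $K_\Sigma \geq 0$ that fails Property $\coeur^{\mathrm{strong}}_{\id_\Sigma,K}$.

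The first part is a contrapositive of the preceding results. Since $\Sigma$ fails the property, Proposition \ref{basic} gives $h^{2,0}(\Sigma)\neq 0$. The preceding proposition then gives $c_1(\Sigma)=0$ in $H^2(\Sigma,\QQ)$, so $c_1(K_\Sigma)=0$.

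For the vanishing of $h^1$, I argue as in Proposition \ref{abel}. For every $\ev$, Example \ref{Knef} shows that
$$\Sp^\Sigma_\ev=\{\ev(T_0)b^{-2}\}, \qquad E^\Sigma_{\ev,\ev(T_0)b^{-2}}=H^*(\Sigma)_{S}.$$
If $h^1(\Sigma)\neq 0$, then $H^{1,0}(\Sigma)\neq 0$. This class has Hochschild degree $1$, so ${\nu'}^\Sigma_{\ev,\alpha}\neq 0$ for the unique eigenvalue $\alpha$. Property $\coeur$ would then hold for every $\ev$, which is a contradiction. Hence $h^1(\Sigma,\CC)=0$.

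For the second part, take any $f\colon \Sigma\to Z$ satisfying Assumption \ref{assump} and any $\QQ$-evaluation map $\ev\in\Omega_{f,\QQ}$. The endomorphism $\ev(\kappa_\tau)$ is built from $R^*(\Sigma,\QQ)$ and does not depend on $f$. So Example \ref{Knef} still gives a single eigenvalue $\alpha=\ev(T_0)b^{-2}$ whose generalized eigenspace is everything. Every such $\ev$ therefore lies in $U_{f,\QQ}$, so $U_{f,\QQ}=\Omega_{f,\QQ}$ and it is enough to show that no $\ev$ satisfies Property $\coeur_{f,\QQ}$.

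I then check the three conditions for this $\alpha$:
\begin{itemize}
\item $\nu^\Sigma_{\ev,\alpha}\geq h^{2,0}>0$, because $H^{2,0}$ has Hochschild degree $2$ and sits inside the eigenspace.
\item ${\nu'}^\Sigma_{\ev,\alpha}=0$, because $h^1=h^3=0$ and every other Hodge type $(p,q)$ with $p-q=1$ vanishes for a surface.
\item $\gamma^\Sigma_{\ev,\alpha}\leq 1$. With $c_1=0$ and no odd cohomology, the matrix of Example \ref{Knef} has $E=0$, $F=0$ and no $U$ block. Hence $\ev(\kappa_\tau)-\alpha b^2$ sends $\un_\Sigma\mapsto -\ev(f_h)[\pt]$ and kills every other basis vector, so its rank is at most $1$.
\end{itemize}
All three disjuncts of Property $\coeur_{f,\QQ}$ fail for every $\ev$, so the property fails on all of $U_{f,\QQ}$, and in particular on no dense subset of it.

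The main obstacle is to make sure that the conclusions of Example \ref{Knef} hold over $R^*_f$ for an arbitrary $f$. This should be fine, because the computation only uses the string and divisor equations and the virtual dimension, none of which involve $Z$. The Novikov variables enter only through $\ev(Q^\beta)$, and the only surviving terms come from $\beta=0$.

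One also has to check that the dimension count with $K_\Sigma\cdot\beta=0$ allows no extra terms with $\beta\neq 0$. In Example \ref{Knef} the cases with $\beta\neq0$ are killed by the string equation or excluded by degree, and the odd-cohomology cases are absent here since $h^1=0$. This leaves exactly the rank-one map described above.
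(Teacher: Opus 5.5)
Your proposal is correct and follows essentially the same route as the paper. The first part is the contrapositive of Proposition \ref{basic} and the preceding proposition, together with the single-eigenvalue argument forcing $\nu'=0$. The second part is the rank-at-most-one computation from Example \ref{Knef} with $c_1=0$, which gives $\gamma\le 1$, $\nu\neq 0$ and $\nu'=0$ for every evaluation map, independently of $f$. Your remarks that $U_{f,\QQ}=\Omega_{f,\QQ}$ and that the computation does not depend on $f$ only make explicit what the paper leaves implicit.
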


\begin{proof}
If $\Sigma$ is a surface with $K_\Sigma \neq 0$ failing Property $\coeur^{\mathrm{strong}}_{\id_{\Sigma},K}$, the previous Proposition implies $c_1(K_\Sigma)=0$.
Furthermore, we must have $h^{2,0}(\Sigma) \neq 0$ and $h^1(\Sigma,\CC)=0$ because there is only one eigenvalue and we require $\nu \neq 0$ and $\nu'=0$.

Conversely, let $\Sigma$ be a surface with $c_1(K_\Sigma)=0$, $h^{2,0}(\Sigma) \neq 0$, and $h^1(\Sigma,\CC)=0$.
Since $c_1(\Sigma)=0$, the matrix $\left[\kappa_\tau-f_0\right]^{\mathcal{B}}_{\mathcal{B}}$ reduces to
$$
\begin{pmatrix}
0&0&0\\
0&0&0\\
-T_h&0&0
\end{pmatrix},
$$
where the blocks correspond to $H^0(\Sigma,\QQ)$, $H^2(\Sigma,\QQ)$, and $H^4(\Sigma,\QQ)$.
The matrix $\ev\(\left[\kappa_\tau-f_0\right]^{\mathcal{B}}_{\mathcal{B}}\)$ has rank at most one, depending on the value of $\ev(T_h)$.
Thus, for any morphism $f \colon \Sigma \to Z$ and any $\QQ$-evaluation map $\ev \colon \widehat{R}^*_{f,\QQ} \to S^*$, we find
$$\gamma^\Sigma_{\ev, \ev(T_0)b^{-2}} \leq 1 ~,~~ \nu^\Sigma_{\ev, \ev(T_0)b^{-2}}=h^{2,0}(\Sigma) = 1 ~,~~ {\nu'}^\Sigma_{\ev, \ev(T_0)b^{-2}}=h^{1,0}(\Sigma)=0,$$
contradicting Property $\coeur_{f,\QQ}$.
\end{proof}

\begin{rem}\label{K3rem}
Let $\Sigma$ be a minimal surface with
$$c_1(K_\Sigma)=0 ~,~~h^{2,0}(\Sigma) \neq 0 ~, ~~h^1(\Sigma,\CC)=0.$$
The Kodaira dimension of $\Sigma$ cannot be $-\infty$ (since $h^{2,0}(\Sigma) \neq 0$) or $2$ (since $c_1(K_\Sigma)=0$).
If the Kodaira dimension of $\Sigma$ is $0$, $\Sigma$ must be a K3 surface because $h^{2,0}(\Sigma) \neq 0$ and $h^1(\Sigma,\CC)=0$.

If the Kodaira dimension of $\Sigma$ is $1$, $\Sigma$ is an elliptic surface $f \colon \Sigma \to B$ over a smooth curve $B$.
%
By the Leray spectral sequence, $h^1(B,\CC) \neq 0$ implies $h^1(\Sigma,\CC) \neq 0$, so $B = \PP^1$.
Furthermore, an elliptic surface over $\PP^1$ with $h^{2,0}=1$ and $c_1=0$ is necessarily a K3 elliptic surface.
\end{rem}

\subsection{Other surfaces}
\begin{pro}\label{mini}
	Let $K$ be a number field containing $\QQ(\ci)$.
	A surface $\Sigma$ fails Property $\coeur_{\id_{\Sigma},K}$ if and only if its minimal model $\Sigma'$ satisfies
	$$c_1(K_{\Sigma'})=0 ~,~~h^{2,0}(\Sigma') \neq 0 ~, ~~h^1(\Sigma',\CC)=0.$$
\end{pro}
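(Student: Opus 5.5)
The plan is to reduce to the minimal model $\Sigma'$ through the blow-up invariance of Corollary \ref{main}, and then settle the minimal case by splitting on whether $K_{\Sigma'}$ is nef.

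\emph{Passing to the minimal model.} The surface $\Sigma$ is obtained from $\Sigma'$ by a finite sequence of point blow-ups. This is a weak factorization in which every center is a point. By Proposition \ref{basic}, a point satisfies Property $\coeur^{\mathrm{strong}}_{\id_\pt,K}$ for every number field $K$. For surfaces we have $s_{\max}=\mathrm{lcd}(1)=1$, so $\QQ_\ext=\QQ(\ci)\subset K$. Corollary \ref{main} then applies and gives
$$\Sigma \textrm{ satisfies } \coeur_{\id_\Sigma,K} \iff \Sigma' \textrm{ satisfies } \coeur_{\id_{\Sigma'},K}.$$
It therefore suffices to prove the statement for a minimal surface $\Sigma'$.

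\emph{Minimal surfaces with $K_{\Sigma'}$ not nef.} Such a surface is $\PP^2$ or ruled over a curve $B$, so $h^{2,0}(\Sigma')=0$. Hence $\nu=0$ for every evaluation map, and $\Sigma'$ satisfies $\coeur^{\mathrm{strong}}$, so in particular $\coeur$. For the same reason $h^{2,0}=0$, so the right-hand condition fails. Both sides of the equivalence are therefore false, which is consistent. (Proposition \ref{basic} covers surfaces with $h^{2,0}=0$ directly.)

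\emph{Minimal surfaces with $K_{\Sigma'}\geq 0$.} By Example \ref{Knef}, every $\ev\in\Omega_{\id_{\Sigma'},K}$ has the single eigenvalue $\ev(T_0)b^{-2}$, whose generalized eigenspace is all of the cohomology. Every evaluation map has spectrum of cardinality one, so $U_{\id_{\Sigma'},K}=\Omega_{\id_{\Sigma'},K}$. Consequently Property $\coeur$ coincides with $\coeur^{\mathrm{strong}}$ for $\Sigma'$.
\begin{itemize}
\item If $\Sigma'$ fails the right-hand condition, then either $c_1\neq 0$, in which case the Proposition on $c_1(\Sigma)\neq 0$ gives $\gamma\geq 2$; or $h^{2,0}=0$, giving $\nu=0$; or $h^1\neq 0$, giving $\nu'\neq 0$ (as in Proposition \ref{abel}). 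In every case $\coeur^{\mathrm{strong}}$ holds, hence $\coeur$ holds.
\item If $\Sigma'$ satisfies the right-hand condition, the last part of Corollary \ref{K3} computes $\gamma\leq 1$, $\nu\neq 0$ and $\nu'=0$ for every evaluation map. The argument there is written for $\QQ$-evaluation maps, but it uses only the shape of the matrix, so it applies verbatim over $K$. Hence $\Sigma'$ fails $\coeur_{\id_{\Sigma'},K}$.
\end{itemize}

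\emph{Main obstacle.} The delicate step is the first one. Corollary \ref{main} is stated for Property $\coeur$, not $\coeur^{\mathrm{strong}}$, and its hypothesis requires the centers to satisfy the strong version. Both requirements hold here, since the centers are points, and that is exactly what the reduction uses. One must also check that passing from $\Sigma$ to $\Sigma'$ only involves blow-downs, so that the evaluation maps chosen for the point centers can be taken with disjoint spectra, as in Remark \ref{equality3}.
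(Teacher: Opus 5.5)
Your proposal is correct and follows the paper's route. You reduce to the minimal model via Corollary~\ref{main}, using that the point centers satisfy $\coeur^{\mathrm{strong}}$ (Proposition~\ref{basic}), and then settle the minimal case with Corollary~\ref{K3} and the propositions preceding it. You also fill in what the paper leaves implicit, namely the non-nef minimal case with $h^{2,0}=0$ and the passage from $\QQ$- to $K$-evaluation maps. One caveat: $U_{\id_{\Sigma'},K}=\Omega_{\id_{\Sigma'},K}$ does not by itself make $\coeur$ and $\coeur^{\mathrm{strong}}$ coincide. It is your subsequent dichotomy (every evaluation map satisfies the condition, or none does) that carries the argument.
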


\begin{proof}
	This follows from Corollaries \ref{K3} and \ref{main} (where $\QQ_\ext=\QQ(\ci)$).
	By choosing a weak factorization between $\Sigma$ and its minimal model $\Sigma'$, we observe that all blow-up centers satisfy Property $\coeur^{\mathrm{strong}}_{\id_{\pt},K}$, because they are points.
\end{proof}

\begin{pro}\label{strongsurface}
Let $X$ be a variety of dimension at most two and $K$ be a number field containing $\QQ(\ci)$.
Then we have
$$X ~\textrm{satisfies Property}~ \coeur_{\id_X,K} \iff X~\textrm{satisfies Property}~\coeur^{\mathrm{strong}}_{\id_X,K}.$$
\end{pro}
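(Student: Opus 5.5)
The implication $\coeur^{\mathrm{strong}}_{\id_X,K} \implies \coeur_{\id_X,K}$ is immediate from Definition \ref{key}: if every $\ev \in \Omega_{\id_X,K}$ satisfies the numerical condition, then so does every $\ev$ in the dense open subset $U_{\id_X,K}$. The content lies in the converse, which I plan to prove by a case analysis on $X$, using the classification of which low-dimensional varieties fail Property $\coeur$.

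First, if $X$ is a point, a curve, or a surface with $h^{2,0}=0$, then $\nu^X_{\ev,\alpha}=0$ for every $\ev$ and $\alpha$, exactly as in Proposition \ref{basic}. Hence both properties hold, and the equivalence is trivial. So assume $X$ is a surface with $h^{2,0}\neq 0$ that satisfies $\coeur_{\id_X,K}$. Such a surface has Kodaira dimension $\geq 0$, so it has a minimal model $\Sigma'$ with $K_{\Sigma'}\geq 0$, reached by a sequence of blow-ups at points. By Proposition \ref{mini}, $\Sigma'$ does not have $c_1(K_{\Sigma'})=0$, $h^{2,0}\neq0$, $h^1=0$. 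Corollary \ref{K3} (contrapositive of its first assertion) then gives that $\Sigma'$ satisfies $\coeur^{\mathrm{strong}}_{\id_{\Sigma'},K}$.

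It remains to propagate the strong property from $\Sigma'$ up to $X$ along point blow-ups. I will do this by induction on the number of blow-ups. Let $\widetilde{X}=\mathrm{Bl}_{\pt}(X_1)$ with $X_1$ satisfying $\coeur^{\mathrm{strong}}$, and note that the point also satisfies $\coeur^{\mathrm{strong}}$. Proposition \ref{Bl equiv} only gives $\coeur^{\mathrm{strong}}_{\mathrm{pr},K}$ for $\widetilde{X}$, so this is the main obstacle: I must pass from evaluation maps on $R^*_{\mathrm{pr},K}$ to arbitrary $\ev\in\Omega_{\id_{\widetilde{X}},K}$.

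To handle it, I will exploit the specifics of a point center, where $r=2$, $s=1$, and $\deg \mathfrak{q}'=2$. Given $\ev\in\Omega_{\id_{\widetilde X},K}$, the obstruction to factoring through $R^*_{\mathrm{pr},K}$ is the inequality of Proposition \ref{incr3}. I will embed $\ev$ in a one-parameter family $\ev_\zeta$ rescaling $Q^{l_E}$, as in the proof of Proposition \ref{incr}, so that for $\zeta$ in a suitable disk $\ev_\zeta$ factors through $\mathrm{pr}$ and therefore satisfies the condition. I then need the condition to persist at the special value $\zeta_1$ corresponding to $\ev$, which requires a semicontinuity argument. Conditions $\nu=0$ and $\nu'\neq0$ behave well under merging of generalized eigenspaces, since their sums only increase. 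The delicate condition is $\gamma\geq2$, because rank is only lower semicontinuous and can drop at special points. Here I would use the explicit structure: the point contributes a single eigenvalue $\ev(T_E)$-shifted with $1$-dimensional eigenspace, carrying only Hodge classes. Consequently, any eigenspace of $\widetilde{X}$ with $\nu\neq0$ and $\nu'=0$ is, via Corollary \ref{equality2} applied directly to $\ev$ together with its own spectral data, isomorphic to a sum of an eigenspace of $X_1$ and possibly the point's line. Then $\gamma$ is at least that of the $X_1$ summand, which is $\geq2$ by strongness of $X_1$.

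If this direct route fails, my fallback is to argue that, for surfaces, every $\ev\in\Omega_{\id_{\widetilde X},K}$ is equivalent in the sense of Example \ref{equiv}, via rescaling $\lambda\ev$, to one meeting the inequality of Proposition \ref{incr3}. The reason is that $Q^{l_E}$ has degree $2$, while all other generators have degree bounded in terms of $K_{\Sigma'}\geq0$. This would reduce the claim directly to $\coeur^{\mathrm{strong}}_{\mathrm{pr},K}$.
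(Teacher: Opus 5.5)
Your trivial direction and your reduction to a minimal model $\Sigma'$ satisfying $\coeur^{\mathrm{strong}}_{\id_{\Sigma'},K}$ are fine. You also correctly identify the obstacle: an arbitrary $\ev\in\Omega_{\id_{\widetilde X},K}$ need not factor through $R^*_{\mathrm{pr},K}$, and $\gamma\geq 2$ can be lost at special parameters because rank is only lower semicontinuous. Neither of your resolutions works, however.

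In the direct route you apply Corollary \ref{equality2} ``directly to $\ev$''. That corollary, like the block decomposition from Theorem \ref{Bl}, is only available for $\ev\in\Omega_{\mathrm{pr},K}$, i.e.\ precisely when the inequality of Proposition \ref{incr3} holds. That is exactly what fails for the maps you still have to treat, so this step is circular and your one-parameter family plays no actual role.

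The fallback is vacuous. Under $\ev\mapsto\lambda\ev$ both sides of the inequality in Proposition \ref{incr3} are multiplied by $|\lambda|^{\deg(Q^\beta)}$. The inequality only involves the degree-zero ratios $\ev(Q^\beta)/\ev(Q^{l_E})^{\deg(Q^\beta)/\deg(Q^{l_E})}$, so no rescaling can move a bad $\ev$ into $\Omega_{\mathrm{pr},K}$.

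More fundamentally, an inductive hypothesis of the form ``$X_1$ satisfies $\coeur^{\mathrm{strong}}$'' is too weak to propagate. Along the family $\ev_\zeta$ of Proposition \ref{incr}, it gives the numerical condition only for $\zeta$ in the infinite set $\mathcal{U}$ where the inequality of Proposition \ref{incr3} holds. Nothing prevents $\gamma$ from dropping at the special value. What you need is a witness that is uniform in $\zeta$. The paper extracts it from the explicit structure of the minimal model, comparing $X$ directly with $\Sigma'$ via $\mathrm{pr}\colon X\to\Sigma'$ and splitting into two cases.

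If $h^1(\Sigma')\neq 0$, then for $\zeta\in\mathcal{U}$ the spaces $H^{2,0}(X)$ and $H^1(X)$ lie in one generalized eigenspace, since $\kappa^{\Sigma'}_\tau$ has a single eigenvalue. Generalized eigenspaces can only merge as $\zeta$ leaves $\mathcal{U}$, so $\nu'\neq 0$ survives.

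If $h^1(\Sigma')=0$, then $c_1(K_{\Sigma'})\neq 0$. The non-zero rational blocks $E$, $F$ of Example \ref{Knef} give a $2\times 2$ minor of $[\ev_\zeta(\kappa_\tau)]-\alpha_{\ev_\zeta}$ whose determinant is the same non-zero rational number for all $\zeta\in\mathcal{U}$. This determinant is locally a convergent power series in $\zeta$ and is constant on an infinite set, so it is constant on the whole disk by continuity. That forces $\gamma\geq 2$ at $\zeta_0$. This analytic-continuation argument for a fixed minor is the idea missing from your proposal.
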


\begin{proof}
The case where $h^{2,0}(X)=0$ has been treated in Proposition \ref{basic}, so that we can assume that $X$ is a surface with $h^{2,0}(X) \neq 0$.
Furthermore, we assume that $X$ satisfies Property $\coeur_{\id_X,K}$, as the converse implication is clear.

Denote by $\Sigma$ the minimal model of $X$.
By Proposition \ref{mini}, it is a surface with $K_\Sigma$ nef and with either $c_1(K_\Sigma) \neq 0$ or $h^1(\Sigma,\CC) \neq 0$ (possibly both).
Furthermore, $X$ is obtained from $\Sigma$ by blowing-up points.
Denote by $\mathrm{pr} \colon X \to \Sigma$ this birational map.

First, we assume that $h^1(\Sigma,\CC) \neq 0$.
Let $\ev_0 \in \Omega_{\id_X,K}$ be any $K$-evaluation map that we may assume to be normalized.
We aim to show that there is a unique eigenvalue $\alpha_0$ whose generalized eigenspace contains the subspaces $H^{2,0}(X) \simeq H^{2,0}(\Sigma)$ and $H^1(X) \simeq H^1(\Sigma)$, because then $\ev_0$ would satisfy Property $\coeur_{\id_X,K}$.
Consider the one-parameter deformation $(\ev_{\zeta})_{\zeta \in D(0,1)} \in \Omega_{\id_X,K}^{D(0,1)}$ introduced in the proof of Proposition \ref{incr}, with $\ev_0=\ev_{\zeta_0}$ for some $\zeta_0 \in F_K$.
Fix $r<1$ large enough so that $\zeta_0 \in D(0,r)$ and the subset
$$\mathcal{U} = \left\lbrace \zeta \in D(0,r) ~|~~\textrm{the inequality in Proposition \ref{incr3} is satisfied for $\ev_{\zeta}$} \right\rbrace$$
is infinite.
Hence, for any $\zeta \in \mathcal{U}$, the $K$-evaluation map $\ev_{\zeta}$ induces a $K$-evaluation map in $\Omega_{\mathrm{pr},K}$.
By Corollary \ref{equality2}, we have an induced $\ev'_\zeta \in \Omega_{\id_\Sigma,K}$ and a decomposition of the generalized eigenspaces of $\ev_{\zeta}(\kappa^X_\tau)$ as a direct sum of generalized eigenspaces from $\ev'_{\zeta}(\kappa^\Sigma_\tau)$ and from $\kappa^\pt_\tau$.
Since the subspaces $H^{2,0}(\Sigma)$ and $H^1(\Sigma)$ are contained in a single generalized eigenspace of $\ev'_{\zeta}(\kappa^\Sigma_\tau)$, then the same holds true for $\ev_{\zeta}(\kappa^X_\tau)$.
As the parameter $\zeta$ moves outside of $\mathcal{U}$, the generalized eigenspaces of $\ev_{\zeta}(\kappa^X_\tau)$ can only merge, so that the subspaces $H^{2,0}(X)$ and $H^1(X)$ are still contained in a single one.

Second, we assume that $h^1(\Sigma,\CC)=0$, and thus $c_1(K_\Sigma) \neq 0$.
Let $\ev \in \Omega_{\mathrm{pr},K}$ be a $K$-evaluation map and denote by $\alpha_\ev \in \Sp^X_\ev$ the unique eigenvalue whose generalized eigenspace contains the subspace $H^{2,0}(X) \simeq H^{2,0}(\Sigma)$.
By Corollary \ref{equality2}, we obtain from $\ev$ a corresponding $K$-evaluation map $\ev' \in \Omega_{\id_\Sigma,K}$ and we have
$f \colon \EE^\Sigma_{\ev',\alpha} \hookrightarrow \EE^X_{\ev,\alpha}$, with the commutation relation
$$\ev\(\kappa^X_\tau\) \circ f = f \circ \ev'\(\kappa^\Sigma_\tau\)$$
as in the end of Remark \ref{equality3}.
We recall that the matrix of $\ev'\(\kappa^\Sigma_\tau\)-\ev'(T_0)$ obtained from Example \ref{Knef} takes the form
$$
\begin{pmatrix}
0&0&0&0&0\\
0&0&0&0&0\\
E&0&0&0&0\\
0&0&0&0&0\\
-\ev'(T_h)& 0&F &0&0
\end{pmatrix},
$$
where the rows and columns corresponding to $H^1(\Sigma)$ and $H^3(\Sigma)$ do not exist, but we leave them here to be consistent with Example \ref{Knef}.
We observe that the matrices $E$ and $F$ have coefficients in $\QQ$ and are non-zero, as $c_1(K_\Sigma) \neq 0$.
In particular, there is a $2 \times 2$ minor $M' \in \mathrm{GL}_2(F_K)$ of the above matrix whose determinant is a non-zero rational number and is independant of the choice of the $K$-evaluation map $\ev'$.
Similarly, since the matrix $[\ev(\kappa^X_{\tau})]-\alpha_\ev$ (written in the basis given by the isomorphism $\Psi$ from Theorem \ref{Bl}) is diagonal by block with blocks of size $1$ corresponding to the blow-up centers, i.e.~to points, and one block corresponding to $\Sigma$, there is a $2 \times 2$ minor $M \in \mathrm{GL}_2(F_K)$ of the matrix $[\ev(\kappa^X_{\tau})]-\alpha_\ev$ whose determinant is a non-zero rational number and is independant of the choice of $\ev$.
Here, we recall that $\alpha_\ev$ denotes the eigenvalue of the block corresponding to $\Sigma$.

Now, let $\ev_0 \in \Omega_{\id_X,K}$ that we may assume to be normalized. Consider again the one-parameter deformation $(\ev_{\zeta})_{\zeta \in D(0,1)} \in \Omega_{\id_X,K}^{D(0,1)}$introduced in the proof of Proposition \ref{incr}, with $\ev_0=\ev_{\zeta_0}$ for some $\zeta_0 \in F_K$.
Fix $r<1$ large enough so that $\zeta_0 \in D(0,r)$ and the subset
$$\mathcal{U} = \left\lbrace \zeta \in D(0,r) ~|~~\textrm{the inequality in Proposition \ref{incr3} is satisfied for $\ev_{\zeta}$} \right\rbrace$$
is infinite.
Hence, for any $\zeta \in \mathcal{U}$, the $K$-evaluation map $\ev_{\zeta}$ induces a $K$-evaluation map in $\Omega_{\mathrm{pr},K}$.
Consequently, the matrix $[\ev_\zeta(\kappa_{\tau})]-\alpha_{\ev_{\zeta}}$ contains a $2 \times 2$ minor $M \in \mathrm{GL}_2(F_K)$ whose determinant is a non-zero rational number which is constant in $\zeta \in \mathcal{U}$.
But the entries of the matrix $[\ev_\zeta(\kappa_{\tau})]-\alpha_{\ev_{\zeta}}$ are locally power series in $\zeta \in D(0,r)-\mathcal{S}$, for some finite subset $\mathcal{S}$.
Since the subset $\mathcal{U}$ is infinite and the determinant of the minor $M$ is a non-zero constant on $\mathcal{U}$, then it stays constant in $D(0,r)-\mathcal{S}$, as well as on $D(0,r)$ by continuity. In particular, we get
$$\gamma^X_{\ev_0,\alpha_{\ev_0}} = \gamma^X_{\ev_{\zeta_0},\alpha_{\ev_0}} \geq 2,$$
where we recall that the eigenvalue $\alpha_{\ev_0}$ is the one of the block corresponding to $\Sigma$, and thus containing the subspace $H^{(2)}(X)$.
\end{proof}

\begin{pro}\label{mini2}
Let $\Sigma$ be a surface whose minimal model $\Sigma'$ satisfies
$$c_1(K_{\Sigma'})=0 ~,~~h^{2,0}(\Sigma') \neq 0 ~, ~~h^1(\Sigma',\CC)=0.$$
Let $\ev$ be a $\QQ(\ci)$-evaluation map and $\alpha \in \Sp^\Sigma_\ev$ be such that
$$\gamma^\Sigma_{\ev,\alpha} <2 ~, \quad \nu^\Sigma_{\ev,\alpha} \neq 0 \quad \textrm{and} \quad  {\nu'}^\Sigma_{\ev,\alpha} = 0.$$
Then there exist a number field extension $\QQ(\ci) \subset \widetilde{\QQ}(\ci)$ and an embedding of free $S^\mathrm{even}_{\widetilde{\QQ}(\ci)}$-modules
$$H^*(\Sigma',\widetilde{\QQ}(\ci))_{S_{\widetilde{\QQ}(\ci)}} \hookrightarrow E^\Sigma_{\ev,\alpha},$$
that is in the linear span of Hodge structures from $H^*(\Sigma',\QQ)_S$ to $H^*(\Sigma,\QQ)_S$.
\end{pro}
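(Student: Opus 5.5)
The natural route is the same weak-factorization-plus-blow-up machinery used in Propositions \ref{mini} and \ref{strongsurface}. We transport $\ev$ along the blow-downs $\Sigma\to\Sigma'$ and then identify which summand of the blow-up decomposition carries $\alpha$. Concretely, $\Sigma$ is obtained from $\Sigma'$ by successive blow-ups at points, $\Sigma=X_0\to X_1\to\dotsb\to X_q=\Sigma'$, and every center $Y_i$ is a point. First we reduce to an evaluation map that factors through $\Omega_{\mathrm{pr},\QQ(\ci)}$ at each step. For this we use the one-parameter deformation $(\ev_\zeta)$ from the proof of Proposition \ref{incr}, restricted to the infinite set $\mathcal{U}$ of parameters where the inequality of Proposition \ref{incr3} holds, exactly as in the proof of Proposition \ref{strongsurface}. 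We then apply Remark \ref{equality3}, with $\QQ_\ext=\QQ(\ci)$ since $\dim\Sigma=2$. This gives an evaluation map $\ev_q$ on $\Sigma'$, evaluation maps $\ev'_{i,k}$ on points, and, for every $\alpha$, the additivity
$$\epsilon^{\Sigma}_{\ev,\alpha}=\epsilon^{\Sigma'}_{\ev_q,\alpha}+\sum_{(i,k)\in I}\epsilon^{\pt}_{\ev'_{i,k},\alpha},\qquad \epsilon\in\{\rho,\gamma,\nu,\nu'\}.$$

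Next we locate $\alpha$. A point has $\nu^{\pt}=0$, so $\nu^\Sigma_{\ev,\alpha}\neq0$ forces $\nu^{\Sigma'}_{\ev_q,\alpha}\neq0$, and hence $\alpha\in\Sp^{\Sigma'}_{\ev_q}$. Since $K_{\Sigma'}$ is numerically trivial, Example \ref{Knef} shows that $\Sp^{\Sigma'}_{\ev_q}$ is the single value $\ev_q(T_0)b^{-2}$ and that the generalized eigenspace is all of $H^*(\Sigma',\QQ(\ci))_{S}$. Consequently $\alpha=\ev_q(T_0)b^{-2}$, and $E^{\Sigma'}_{\ev_q,\alpha}=H^*(\Sigma')_S$. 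The final part of Remark \ref{equality3} then supplies an embedding $f_{\overline{\QQ}}\colon E^{\Sigma'}_{\ev_q,\alpha}\hookrightarrow E^\Sigma_{\ev,\alpha}$. This embedding comes from $f_{\QQ_\ext}=s_1f_1+\dotsb+s_Nf_N$, where the $f_j\colon H^*(\Sigma',\QQ)_S\to H^*(\Sigma,\QQ)_S$ are morphisms of Hodge structures (here $k=0$, since $Z=X_q$). Because $f_{\QQ_\ext}$ is defined over $\QQ(\ci)$, its restriction to the whole space $H^*(\Sigma')_S$ is defined over a finite extension; we take $\widetilde{\QQ}(\ci)$ to contain the splitting field needed for the eigenvalue, as in the remark after Definition \ref{Spectrum}, so that $\overline{\QQ}$ can be replaced by it. The hypotheses $\gamma<2$ and $\nu'=0$ are used only for consistency: they guarantee that we are in the K3-type situation of Corollary \ref{K3}, and they are not needed to produce the map. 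We also note that $\nu'=0$ is automatic here, since $h^1(\Sigma')=0$.

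The main obstacle is the reduction step, namely passing from an arbitrary $\ev\in\Omega_{\id_\Sigma}$ to one that factors through the blow-up maps. Proposition \ref{incr3} needs the inequality, and Proposition \ref{incr} only handles generic maps, while the statement concerns every $\ev$. My plan is to deform $\ev=\ev_{\zeta_0}$ inside $D(0,r)$ to $\zeta\in\mathcal{U}$. On $\mathcal{U}$, the maps $f_\zeta$ have entries that are power series in $\zeta$ away from a finite set, since they come from $\Psi$ in Corollary \ref{equality2} evaluated at $\ev_\zeta$. I would show that the injective map $H^*(\Sigma')_S\to E^\Sigma_{\ev_\zeta,\alpha_\zeta}$ extends analytically to $\zeta_0$, arguing via Lemma \ref{eigen} that the generalized eigenspaces can only merge at special parameters. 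Injectivity at $\zeta_0$ then needs a nonvanishing-minor argument, as in Proposition \ref{strongsurface}. The point-blocks are one-dimensional, and their eigenvalues are explicit shifts; they can collide with $\alpha$ at $\zeta_0$, but this only enlarges $E^\Sigma_{\ev,\alpha}$, which is harmless for an embedding. The genuinely delicate part is checking that the Hodge-span property survives the limit. I would handle this by noting that a limit of $\QQ(\ci)$-combinations of the fixed finitely many Hodge morphisms $f_j$, with only the coefficients varying in $F$, stays in that span.
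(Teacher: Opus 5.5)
Your main line is exactly the paper's proof. You pass to $\Sigma'$, use the additivity of Remark~\ref{equality3}, note that $\nu^{\pt}=0$ forces $\alpha\in\Sp^{\Sigma'}_{\ev_q}$, observe that $E^{\Sigma'}_{\ev_q,\alpha}$ is all of $H^*(\Sigma')_S$ since $K_{\Sigma'}\equiv 0$, and take the Hodge-span embedding supplied at the end of Remark~\ref{equality3}. Like you, the paper never uses $\gamma<2$ or $\nu'=0$. The one place you depart from it is the reduction paragraph: the paper performs no deformation and simply applies Remark~\ref{equality3} to the given $\ev$. You are right that this conceals an assumption. In the blow-down direction, Corollary~\ref{equality2} requires $\widetilde{\ev}\in\Omega_{\mathrm{pr},K}$, i.e.\ the inequality of Proposition~\ref{incr3}, and the paper addresses this only in Propositions~\ref{incr} and~\ref{strongsurface}.

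However, your patch does not close this gap as sketched.

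First, $f_\zeta$ comes from evaluating Iritani's $\Psi$. Its entries lie in $\widehat{R}^*(X,K)$ and genuinely involve negative powers of $\mathfrak{q}$ (cf.\ the $O(\mathfrak{q}^{-1/(r-1)})$ terms in the proof of Theorem~\ref{Bl}). Along the family of Proposition~\ref{incr} one has $\ev_\zeta(\mathfrak{q})=\zeta\, b^{\deg\mathfrak{q}}$. So $f_\zeta$ is a Laurent-type series in $\zeta$ known to converge only on $\mathcal{U}$, not a power series around $\zeta_0$. Moreover, $\mathcal{U}$ is defined by conditions on $|\zeta|$, which is locally constant on $F_K^\times$. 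Hence a point $\zeta_0\neq 0$ outside $\mathcal{U}$ is not even in its closure, and neither a limit nor the identity-theorem argument of Proposition~\ref{strongsurface} applies to $f_\zeta$ as it stands.

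Second, suppose you had an analytic family of Hodge-span maps into the generalized eigenspaces over a disk containing $\zeta_0$. Injectivity on the infinite set $\mathcal{U}$ would still only give injectivity outside finitely many parameters, and $\zeta_0$ may be one of them. Proposition~\ref{strongsurface} avoided this because its minor was built from the rational blocks $E,F$ of $\kappa^\Sigma$, which no evaluation changes. You do not exhibit any evaluation-independent minor of $f_\zeta$.

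By contrast, the point you single out as delicate, preservation of the Hodge span, is the easy one: it is a closed $F_K$-linear condition defined over $\QQ$.

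So your argument is complete exactly to the extent that the paper's is, namely granting Remark~\ref{equality3} for an arbitrary $\ev$. The reduction paragraph is a plan rather than a proof.
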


\begin{proof}
Consider the number field $\QQ_\ext = \QQ(\ci)$ as in Corollary \ref{main}, and let
$$\Sigma=:X_0 \leftrightarrow X_1 \leftrightarrow X_2 \leftrightarrow X_3 \leftrightarrow \dotsc \leftrightarrow X_q:=\Sigma'$$
be a weak factorization between $\Sigma$ and its minimal model $\Sigma'$ with blow-up centers $Y_1, \dotsc, Y_q$.
We define the finite field extension $\QQ(\ci) \subset \widetilde{\QQ}(\ci)$ by adding all the roots of the characteristic polynomials of the matrices involved in the weak factorization.
From Remark \ref{equality3}, we have the equality
$$\nu^\Sigma_{\ev,\alpha} = \nu^{\Sigma'}_{\ev_q,\alpha} + \sum_{i,k} \nu^{Y_i}_{\ev_{i,k},\alpha}.$$
Since all blow-up centers are points, they have no $h^{2,0}$, so $\nu^{Y_i}_{\ev_{i,k},\alpha} =0$.
Therefore, $\nu^{\Sigma'}_{\ev_q,\alpha} \neq 0$, which implies $\alpha \in \Sp^\Sigma_\ev \cap \Sp^{\Sigma'}_{\ev_q}$.
By Remark \ref{equality3}, there exist morphisms of free $S^{\mathrm{even}}$-modules and of Hodge structures
$f_k \colon H^*(\Sigma',\QQ)_S \to H^*(\Sigma,\QQ)_S~,~~1 \leq k \leq N,$
such that
$$\(s_1f_1+\dotsb+s_Nf_N\) \colon H^*(\Sigma',\widetilde{\QQ}(\ci))_{S_{\widetilde{\QQ}(\ci)}} = E^{\Sigma'}_{\ev_q,\alpha} \hookrightarrow E^\Sigma_{\ev,\alpha},$$
where $(s_1, \dotsc, s_N)$ is a $\QQ$-basis of the number field $\widetilde{\QQ}(\ci)$.
\end{proof}

\subsection{Cubic fourfolds}
Let $X$ be a cubic fourfold.
Its Hodge diamond decomposes into an ambient part, pulled back from $\PP^5$, and a primitive part.

\begin{pro}\label{no coeur}
A cubic fourfold $X$ fails Property $\coeur_{\id_X,\QQ}$.
\end{pro}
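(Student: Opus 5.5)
By Proposition \ref{redef}, Property $\coeur_{\id_X,\QQ}$ is equivalent to the existence of some $\ev \in U_{\id_X,\QQ}$ satisfying the numerical condition. To show that $X$ fails it, I will show that every $\ev \in U_{\id_X,\QQ}$ violates the condition. It therefore suffices to find, for each generic $\ev$, an eigenvalue $\alpha$ with
$$\nu^X_{\ev,\alpha}\neq 0,\qquad {\nu'}^X_{\ev,\alpha}=0,\qquad \gamma^X_{\ev,\alpha}\leq 1.$$
The second condition is automatic, since $H^{odd}(X)=0$ for a cubic fourfold and hence $H^{(1)}(X)=0$, so ${\nu'}^X_{\ev,\alpha}=0$ for every $\alpha$.

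The main step is to describe $\ev(\kappa_\tau)$ for an arbitrary $\QQ$-evaluation map. Here $\tau=\sum T_k\alpha_k$, with $\alpha_k$ running over the Hodge classes. These are the ambient classes $1,H,H^2,H^3,H^4$ together with the primitive Hodge classes in $H^4_{prim}$. Take $\phi\in H^{3,1}(X)$, or any primitive class orthogonal to all Hodge classes, meaning its span with $H^{3,1}\oplus H^{1,3}$ in the transcendental lattice. I claim that $\kappa_\tau$ acts on such $\phi$ as multiplication by the scalar $T_0$, up to terms that vanish.

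By Proposition \ref{preserves HS}, $\kappa_\tau$ preserves Hochschild degree, and $H^{(2)}(X)=H^{3,1}$ lives in degree $4$. So $\ev(\kappa_\tau)-\ev(T_0)b^2$ maps $H^{3,1}\otimes S$ into $H^{3,1}\otimes S$ of cohomological degree $6$, which is zero. Indeed, by Proposition \ref{span HS} it is a sum of morphisms of Hodge structures $H^4(X)\to H^6(X)(1)$, and $H^6(X)=\QQ H^3$ is of type $(3,3)$, so the $(3,1)$ part maps to $0$. The same holds for every morphism landing in $H^{2k}$ with $k\neq 2$. The only components that might survive are maps $H^4\to H^4$ coming from the degree-$0$ Novikov part combined with degree-$2$ variables $T_k$ paired with degree-$2$ classes, but by the dimension count every homogeneous piece of $\kappa_\tau$ raises the total degree by $2$. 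I must check this carefully, since the $T_k$ attached to degree-$4$ Hodge classes have degree $-2$ and can compensate. This is the step I expect to be the real obstacle. I will handle it by reducing, through the divisor and string equations, to correlators with inputs $\phi$ and some Hodge classes. For any term mapping $\phi$ to a multiple of $\phi'\in H^4$, the coefficient is a Hodge-theoretic pairing against $\phi$, and it vanishes because $\phi$ is orthogonal to all algebraic classes on the moduli space. More cleanly, the map $u\mapsto\sum\langle\ldots,u,\phi_k,\ldots\rangle\phi^k$ restricted to $T(X)$ is a morphism of Hodge structures $T(X)\to H^*(X)$. Its image from the irreducible transcendental part lands in $T(X)$ itself, and by deformation invariance together with monodromy (the monodromy group acts on $H^4_{prim}$ through a large orthogonal group) it must be a scalar multiple of the identity on $H^4_{prim}$. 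The degree of that scalar is $2$, so it comes from $T_0$ times the unit, or from $Q$-terms of degree $2$. There are none of these, since $\deg Q=6$ and the variables of degree $2$ are $T_0$ only.

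Once this is in place, the generalized eigenspace $E$ of $\alpha=\ev(T_0)b^{-2}$ contains $H^{3,1}\oplus H^{1,3}$, so $\nu^X_{\ev,\alpha}=1$. On the ambient part the generic spectrum, computed as in Example \ref{Givental}, shifted by $T_0$ (Remark \ref{string}) and deformed, contributes a $2$-dimensional block at $\alpha$ on which $\ev(\kappa_\tau)-\alpha$ has rank at most $1$. On the transcendental part the operator $\ev(\kappa_\tau)-\alpha$ is zero. The remaining step is to bound $\gamma^X_{\ev,\alpha}\leq 1$ on the whole generalized eigenspace. I would argue this generically: on the open dense set $U$, by lower semicontinuity and the Givental computation at $T_k=0$ (rank $1$ on $\mathrm{Vect}(H^3-21Q,H^4-6QH)$, rank $0$ elsewhere in $E$), together with the constancy of invariants along deformations in Lemma \ref{eigen}. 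Since the primitive part decouples, this gives $\gamma\leq 1$, and hence the condition of Property $\coeur$ fails at $\alpha$.
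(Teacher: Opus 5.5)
\textbf{Where the proposal fails.} Your skeleton matches the paper's. You reduce via Proposition~\ref{redef} to showing that every $\ev\in U_{\id_X,\QQ}$ violates the condition. You observe ${\nu'}^X_{\ev,\alpha}=0$, and you make explicit, as the paper does not, that $H^{(1)}(X)=0$ because $X$ has no odd cohomology. You use irreducibility of the monodromy to make $\ev(\kappa_\tau)$ block-diagonal with a scalar block $\alpha_0\cdot\id$ on the primitive part, so that $\nu^X_{\ev,\alpha_0}=1$. Then you bound $\gamma^X_{\ev,\alpha_0}$ from the ambient side.

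The gap is in that last bound. Lower semicontinuity of rank only says the rank cannot drop when you pass from the special point $\ev_0$ ($T_k=0$) to nearby generic points. So Givental's value $\gamma^X_{\ev_0,0}=1$ gives no upper bound for generic $\ev$. The constancy statements of Section~\ref{nonarchi} hold only away from a finite exceptional set of a one-parameter family, and nothing shows $\ev_0$ avoids it, or even that $\ev_0\in U_{\id_X,\QQ}$.

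Your supporting claim that the primitive scalar is exactly $\ev(T_0)b^{-2}$ is also wrong. Besides $T_0$, monomials such as $QT_k$ with $\alpha_k=H^3$ (degree $6-4=2$) or $QT_k^2$ with $\alpha_k=H^2$ have degree $2$. The corresponding invariants do not vanish in general; for instance $\langle\phi,\phi',H^3\rangle_{0,3,1}$ for primitive $\phi,\phi'$ counts lines meeting two surfaces and a line. So you cannot assume the ambient double eigenvalue sits at $\alpha_0$, and the assertion that the ambient part ``contributes a $2$-dimensional block at $\alpha$'' is unsupported.

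\textbf{The missing step.} What you need is a plain dimension count that uses the maximality of the spectrum. Since $\ev_0$ already has four distinct eigenvalues, every $\ev\in U_{\id_X,\QQ}$ has $\mathrm{card}\,\Sp^X_\ev\geq 4$. Now split into two cases.
\begin{itemize}
\item If $\alpha_0$ is not an eigenvalue of the ambient block, then $E^X_{\ev,\alpha_0}$ is purely primitive and $\gamma^X_{\ev,\alpha_0}=0$.
\item If it is, then $\Sp^X_\ev$ equals the ambient spectrum. So the $5$-dimensional ambient block carries at least four distinct eigenvalues, its generalized eigenspace at $\alpha_0$ has dimension at most $2$, and the nilpotent part there has rank at most $1$. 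Adding the rank-$0$ primitive block gives $\gamma^X_{\ev,\alpha_0}\leq 1$.
\end{itemize}
This is essentially the paper's argument. It needs neither semicontinuity nor any knowledge of where $\alpha_0$ sits relative to the ambient spectrum.
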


\begin{proof}
Let $X$ be a cubic fourfold, and let $\zeta \in D(0,1)$.
We define $\ev_0 \colon R^*_{\id_X,\QQ} \to S^*$ be the $\QQ$-evaluation map
$$\ev_0(\mathfrak{q}')=b^{\deg(\mathfrak{q}')}~,~~\ev_0(Q)= \zeta^3 b^6 ~,~~ \textrm{and} \quad \ev_0(T_k) = 0,$$
for each $0 \leq k \leq h$.

Following Example \ref{Givental}, we deduce that the endomorphism $\ev_0\(\kappa_\tau\)$ vanishes on the primitive part and has the following eigenvalues on the ambient part:
\begin{itemize}
\item $0$ with algebraic multiplicity $2$,
\item $9 \zeta b^2$, $9e^{2 \ci \pi/3} \zeta b^2$, $9e^{-2 \ci \pi/3} \zeta b^2$, each with algebraic multiplicity $1$.
\end{itemize}
Furthermore, the matrix representing the action on the ambient part of the generalized eigenspace for the eigenvalue 0 is conjugate over $\QQ$ to the Jordan block
$$\begin{pmatrix}
0 & 1 \\
0 & 0
\end{pmatrix}.
$$
Since this submatrix has coefficient in $\QQ$, its evaluation is simply the matrix itself, which has rank one.
Consequently, for the eigenvalue $\alpha=0$, we obtain
$$\gamma^X_{\ev_0, 0} = 1 ~, \quad \nu^X_{\ev_0, 0}=1~, \quad \textrm{and} \quad {\nu'}^X_{\ev_0, 0}=0.$$

More generally, let $\ev \in U_{\id_X,\QQ}$.
By irreducibility of the monodromy representation on the primitive cohomology of $X$, we know that the matrix of $\ev(\kappa_\tau)$ is still diagonal by blocks (one for the ambient part and one for the primitive part) and that the primitive block is a multiple of the identity matrix, say $\alpha_0 \cdot \id$ with $\alpha_0 \in F_K$. In particular, the rank of the matrix $\ev(\kappa_\tau)-\alpha_0$ on the primitive block is zero.

On the ambient part, since the cardinality of $\Sp^X_\ev$ is assumed to be maximal (we have $\ev \in U_{\id_X,\QQ}$), we have at least $4$ distinct eigenvalues, so that the generalized eigenspaces have dimension at most $2$.
Consequently, for any $\alpha \in \Sp^X_\ev$, we have $\gamma^X_{\ev,\alpha} \leq 1$.
This contradicts Property $\coeur_{\id_X,\QQ}$.
\end{proof}

\begin{thm}\label{rat cubic K3}
If $X$ is a rational cubic fourfold, then there exists a projective K3 surface $\Sigma$ and an isomorphism of Hodge structures
$$H^4(X,\QQ)_\mathrm{primitive} \simeq H^2(\Sigma,\QQ)(-1).$$
\end{thm}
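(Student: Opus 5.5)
Fix a weak factorization $X = X_0 \leftrightarrow X_1 \leftrightarrow \dotsb \leftrightarrow X_q = \PP^4$ with smooth centers $Y_i$, which exists because $X$ is rational. Every center has dimension at most $2$. Work over a number field $K \supset \QQ_\ext = \QQ(e^{2\ci\pi/6})$ that also contains the extra algebraic numbers needed below (for instance those from Proposition \ref{mini2}).

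The first goal is to show that some center $Y_i$ fails Property $\coeur^{\mathrm{strong}}_{\id_{Y_i},K}$. Proposition \ref{no coeur} says $X$ fails $\coeur_{\id_X,\QQ}$. I would upgrade this to $\coeur_{\id_X,K}$: the evaluation map $\ev_0$ and the genericity argument given there do not use anything special about $\QQ$, so the same argument works over $K$. By Proposition \ref{basic}, $\PP^4$ satisfies $\coeur_{\id_{\PP^4},K}$. Corollary \ref{main} then forces some center $Y_i$ to fail $\coeur^{\mathrm{strong}}_{\id_{Y_i},K}$.

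Next, identify that center. Points, curves and surfaces with $h^{2,0}=0$ satisfy the strong property by Proposition \ref{basic}. So $Y_i$ is a surface, and Propositions \ref{strongsurface} and \ref{mini} show that it fails $\coeur_{\id_{Y_i},K}$. Its minimal model $\Sigma$ therefore has $c_1(K_\Sigma)=0$, $h^{2,0}\neq 0$ and $h^1=0$. By Remark \ref{K3rem}, $\Sigma$ is a (projective) K3 surface.

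The main step, and the main obstacle, is to turn this numerical failure into an isomorphism of Hodge structures. Take the generic evaluation map $\ev \in U_{\id_X,K}$ from the proof of Proposition \ref{no coeur}. There the primitive block is $\alpha_0\cdot\id$, and since the primitive part contains $H^{3,1}$, the generalized eigenspace $E^X_{\ev,\alpha_0}$ has $\nu=1$, $\nu'=0$ and $\gamma\le 1$. Transport $\ev$ along the factorization as in Remark \ref{equality3}, choosing the evaluation maps on the centers generically so that spectra are disjoint whenever we have that freedom. The additivity $\nu^{X}_{\ev,\alpha_0} = \nu^{\PP^4}_{\ev_q,\alpha_0} + \sum \nu^{Y_i}_{\ev'_{i,k},\alpha_0}$ holds, and the $\PP^4$ term is zero. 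Hence exactly one term $(i,k)$ contributes $\nu=1$, and $Y_i$ is a surface whose minimal model is a K3 surface $\Sigma$.

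For this term I need $\gamma^{Y_i}_{\ev'_{i,k},\alpha_0}<2$ and $\nu'=0$. These should follow from the additivity of $\gamma$ and $\nu'$ in Remark \ref{equality3}. Proposition \ref{mini2} then gives an embedding $H^*(\Sigma)_S \hookrightarrow E^{Y_i}_{\ev'_{i,k},\alpha_0}$. Composing with the embedding $E^{Y_i}\hookrightarrow E^X_{\ev,\alpha_0}$ from Remark \ref{equality3} gives a map $H^*(\Sigma,\widetilde K)_S \to E^X_{\ev,\alpha_0}$. This map lies in the $\widetilde K$-span of morphisms of Hodge structures, with a Tate twist $(-k)$, and it commutes with $\kappa$.

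Now cut down to the transcendental part. The generalized eigenspace $E^X_{\ev,\alpha_0}$ is the primitive part, possibly together with a few ambient classes. Restrict the composite map to the transcendental lattice $T(\Sigma)$, the orthogonal complement of $\mathrm{NS}(\Sigma)$ in $H^2$. Project to $H^4_{\mathrm{prim}}$; this projection is a morphism of Hodge structures.

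Write the composite as $\sum_j s_j f_j$ with each $f_j$ a rational morphism of Hodge structures and the $s_j$ linearly independent over $\QQ$. The composite is injective, and $T(\Sigma)_\QQ$ is an irreducible Hodge structure. So some component $f_j$ is nonzero, hence injective, on $T(\Sigma)_\QQ$. It embeds $T(\Sigma)_\QQ(-1)$ into $H^4_{\mathrm{prim}}$, carrying $H^{2,0}$ onto $H^{3,1}$, and therefore carries the transcendental part of $\Sigma$ isomorphically onto the transcendental part of the cubic.

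To finish, compare the algebraic parts. Both algebraic parts are sums of Tate classes $\QQ(-2)$. Their dimensions are $22-\dim T$ and $21-\dim T$, where the K3 count is for $H^2(\Sigma)$ and the cubic count is for $H^4_{\mathrm{prim}}$. These differ by one, so $H^2(\Sigma)$ has one algebraic class too many. To remove it, I would replace $\Sigma$ by a different K3 surface. If that does not work, I would instead interpret the statement with the K3 primitive cohomology. This dimension bookkeeping is where I expect the real subtlety, alongside making the additivity of $\gamma$ along blow-downs rigorous.
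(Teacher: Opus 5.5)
Your argument breaks down at the final step, and the cause is a miscount rather than a real obstruction. For a cubic fourfold, $b_4(X)=23$ with $h^{3,1}=h^{1,3}=1$ and $h^{2,2}=21$. So $H^4(X,\QQ)_{\mathrm{primitive}}$ has dimension $22$ and Hodge numbers $(1,20,1)$, exactly those of $H^2(\Sigma,\QQ)$ for a K3 surface. Write $t=\dim_\QQ T(\Sigma)_\QQ$. Then $\mathrm{NS}(\Sigma)_\QQ(-1)$ and the space $A(X)$ of primitive Hodge classes of $X$ are both $\QQ(-2)^{\oplus(22-t)}$. Both are nondegenerate for the intersection form (Hodge index, resp.\ Hodge--Riemann), which gives orthogonal splittings $H^2(\Sigma,\QQ)(-1)=\mathrm{NS}(\Sigma)_\QQ(-1)\oplus T(\Sigma)_\QQ(-1)$ and $H^4(X,\QQ)_{\mathrm{primitive}}=A(X)\oplus T(X)_\QQ$. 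An isomorphism $T(\Sigma)_\QQ(-1)\simeq T(X)_\QQ$ therefore finishes the proof. As written, though, you conclude that $\Sigma$ has one algebraic class too many and propose remedies that do not work. Nothing suggests another K3 would fare better. Passing to primitive K3 cohomology, which has dimension $21$, would create exactly the mismatch you feared and would change the statement. So the proposal, as it stands, does not reach the theorem.

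With the count corrected, your route is valid, but it needs several details you skipped:
\begin{itemize}
\item The $f_j$ are $S^{\mathrm{even}}$-linear with coefficients in $F=\QQ((a^\QQ))$, not rational maps. You must expand $f_j|_{T(\Sigma)}=\sum_l g_{j,l}a^l$, check that each $g_{j,l}\colon T(\Sigma)_\QQ(-1)\to H^4(X,\QQ)_{\mathrm{primitive}}$ is a morphism of Hodge structures, and apply irreducibility to a nonzero $g_{j,l}$.
\item Injectivity after projecting to the primitive part needs the remark that every morphism of Hodge structures from $T(\Sigma)_\QQ(-1)$ to the Tate-type ambient summands of $H^*(X,\QQ)_S$ vanishes.
\item The image is irreducible and its complexification contains $H^{3,1}$, so its orthogonal complement consists of Hodge classes; hence it equals $T(X)_\QQ$.
\end{itemize}
It is also cleaner to use the explicit $\QQ$-evaluation map $\ev_0$ with $\alpha=0$ from Proposition~\ref{no coeur}, rather than a generic $\ev\in U_{\id_X,K}$. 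For $\ev_0$, Example~\ref{Givental} gives $E^X_{\ev_0,0}=P_S\oplus H^*(X,\QQ)_{\mathrm{primitive},S}$ explicitly.

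The paper's proof proceeds differently. It shows (Lemma~\ref{sub}) that each $f_k$ lands in $P_S\oplus H^*(X,\QQ)_{\mathrm{primitive},S}$. It then uses $\ev_0(\kappa^X_\tau)\circ f=f\circ\ev'(\kappa^\Sigma_\tau)$ to obtain a block-triangular matrix whose $H^2(\Sigma)\to H^4(X)_{\mathrm{primitive}}$ block $A'=\sum_k s_kA'_k$ is invertible. Finally it picks rational weights on the coefficients $g'_{k,l}$ of the $a$-expansion so that the determinant does not vanish. This builds the isomorphism on all of $H^2(\Sigma,\QQ)(-1)$ at once. Your version avoids the determinant argument at the price of irreducibility of $T(\Sigma)_\QQ$ and the dimension count, which is exactly where your count needs fixing.
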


\begin{proof}
Consider a weak factorization
$$X=:X_0 \leftrightarrow X_1 \leftrightarrow X_2 \leftrightarrow X_3 \leftrightarrow \dotsc \leftrightarrow X_q:=\PP^4,$$
with blow-up centers $Y_1,\dotsc,Y_q$ as in Definition \ref{weak}, and let $\QQ_\ext := \QQ(e^{2\ci \pi/6})$ be the number field relevant for Corollary \ref{main}.

Let $\zeta \in D(0,1)$ and $\ev_0$ be the $\QQ$-evaluation map from the proof of Proposition \ref{no coeur}.
We take $\alpha=0 \in \Sp^X_{\ev_0}$, so that
$$\gamma^X_{\ev_0, 0} = 1~, \quad \nu^X_{\ev_0, 0}=1~, \quad \textrm{and} \quad {\nu'}^X_{\ev_0, 0}=0.$$
Following Remark \ref{equality3}, we choose a number field extension $\widetilde{\QQ}_\ext$ large enough so that, for every $\QQ_\ext$-evaluation map $\ev$ involved in the weak factorization, the characteristic polynomial of $\ev(\kappa_\tau)$ is split.
Take a $\QQ$-basis $(s_1,\dotsc, s_N)$ of $\widetilde{\QQ}_\ext$.
Then by Remark \ref{equality3} we have
$$\epsilon^X_{\ev_0,\alpha} = \epsilon^{\PP^4}_{\ev_q,\alpha} + \sum_{(i,k) \in I} \epsilon^{Y_i}_{\ev_{i,k},\alpha}~, \quad \epsilon \in \left\lbrace \gamma, \nu, \nu'\right\rbrace.$$
Since $H^{2,0}(\PP^4)=0$, we have $\nu^{\PP^4}_{\ev_q,\alpha}=0$.
Given $\nu^X_{\ev, \alpha}=1$, there must exist a pair $(i,k) \in I$ such that $\nu^{Y_i}_{\ev_{i,k},\alpha} = 1$.
This implies
$\alpha \in \Sp^X_{\ev_0} \cap \Sp^{Y_i}_{\ev_{i,k}}$.
Furthermore, we must have
$${\nu'}^{Y_i}_{\ev_{i,k},\alpha} = 0 \quad \textrm{and} \quad \gamma^{Y_i}_{\ev_{i,k},\alpha} <2,$$
because it is true for $X$, so $Y_i$ fails Property $\coeur^{\mathrm{strong}}_{\id_{Y_i},\QQ_\ext}$.
By Propositions \ref{basic} and \ref{mini}, $Y_i$ is a surface whose minimal model $\Sigma$ satisfies $c_1(K_\Sigma)=0$, $h^{2,0}(\Sigma) \neq 0$, and $h^1(\Sigma,\CC)=0$.

Combining Proposition \ref{mini2} and Remark \ref{equality3}, we obtain $N$ morphisms of $S$-modules and of Hodge structures
$$f_1, \dotsc, f_N \colon H^*(\Sigma,\QQ)_S(-1) \to H^{*+2}(X,\QQ)_S$$
such that the morphism
$$f := s_1f_1+ \dotsb + s_N f_N$$
is the composition of embeddings
$$H^*(\Sigma,\widetilde{\QQ}_\ext)_{S_{\widetilde{\QQ}_\ext}} \hookrightarrow E^{Y_i}_{\ev_{i,k},\alpha} \hookrightarrow E^X_{\ev_0,\alpha} \subset H^{*+2}(X,\widetilde{\QQ}_\ext)_{S_{\widetilde{\QQ}_\ext}}.$$
Furthermore, $\Sigma$ is a K3 surface because $h^{3,1}(X)=1$ and thus $h^{2,0}(\Sigma) = 1$, see Remark \ref{K3rem}.
Since $\ev_0$ is defined over $\QQ$, Example \ref{Givental} implies
$$E^X_{\ev_0,\alpha} = P^*_S \oplus H^*(X,\QQ)_{\mathrm{primitive},S},$$
where we set
$$w_3 := H^3-21 a^3b^6~,~~w_4 := H^4-6 a^3b^6H~,~~P^*_S:= S \cdot \langle w_3, w_4 \rangle \subset H^*(X,\QQ)_S,$$
with $H$ the hyperplane class in $X$.

\begin{lem}\label{sub}
For each $1 \leq k \leq N$, the morphism $f_k$ of $S$-modules and of Hodge structures satisfies
$$f_k \colon H^*(\Sigma,\QQ)_S(-1) \to P^{*+2}_S \oplus H^{*+2}(X,\QQ)_{\mathrm{primitive},S}.$$
\end{lem}

\begin{proof}
Let $\bar{f}, \bar{f}_1, \dotsc, \bar{f}_N$ be the projections of $f, f_1, \dotsc, f_N$ onto the ambient cohomology of $X$.
Recall that the image of $\bar{f}$ lands in $P^*_{S_{\widetilde{\QQ}_\ext}}$.
We aim to show that for each $1 \leq k \leq N$ the image of $\bar{f}_k$ lies in $P^*_S$.

Consider the basis $\(H^0, \dotsc, H^4\)$ of $H^*(X,\QQ)_{\mathrm{ambient}}$ and a graded basis $\(e_1,\dotsc, e_m\)$ of $H^*(\Sigma,\QQ)$.
We write
$$\bar{f}_k(e_j) = \sum_{i=0}^4 \zeta^{(k)}_{i,j} b^{\deg(e_j)+2-2i} H^i ~,~~ \zeta^{(k)}_{i,j} \in F.$$
Recalling that $\bar{f} = s_1 \bar{f}_1 + \dotsb + s_N \bar{f}_N$ and that the image of $\bar{f}$ lies in $P^*_{S_{\widetilde{\QQ}_\ext}}$, then for each $1 \leq j \leq m$, there exist $x_j,y_j \in S_{\widetilde{\QQ}_\ext}$ such that
$$\sum_{\substack{1 \leq k \leq N \\ 0 \leq i \leq 4}} s_k \zeta^{(k)}_{i,j} b^{\deg(e_j)+2-2i} H^i = x_j w_3 + y_j w_4.$$
By identifying the components of $H^0, \dotsc, H^4$ on both sides, we obtain
\begin{eqnarray*}
\sum_{1 \leq k \leq N} s_k \zeta^{(k)}_{0,j} b^{\deg(e_j)+2} & = & -21 a^3b^6 x_j, \\
\sum_{1 \leq k \leq N} s_k \zeta^{(k)}_{1,j} b^{\deg(e_j)} & = & -6 a^3b^6y_j, \\
\sum_{1 \leq k \leq N} s_k \zeta^{(k)}_{2,j} b^{\deg(e_j)-2} & = & 0, \\
\sum_{1 \leq k \leq N} s_k \zeta^{(k)}_{3,j} b^{\deg(e_j)-4} & = & x_j, \\
\sum_{1 \leq k \leq N} s_k \zeta^{(k)}_{4,j} b^{\deg(e_j)-6} & = & y_j.
\end{eqnarray*}
By combining the first and the fourth equations, and the second and the fifth equations, and using that $\(s_1, \dotsc, s_N\)$ is a basis of $\widetilde{\QQ}_\ext$ over $\QQ$, we obtain
\begin{eqnarray*}
\(21 a^3 \zeta^{(k)}_{3,j} + \zeta^{(k)}_{0,j}\) & = & 0, \\
\(6 a^3 \zeta^{(k)}_{4,j} + \zeta^{(k)}_{1,j}\) & = & 0, \\
\zeta^{(k)}_{2,j} & = & 0, \\
\end{eqnarray*}
for each $1 \leq k \leq N$.
These relations confirm that each $\bar{f}_k$ maps into $P^*_S$, as desired.
\end{proof}

Lemma \ref{sub} implies that we obtain a morphism of Hodge structures
$$f_k \colon \langle \un_\Sigma b^2, [\pt] b^{-2} \rangle_F(-1) \oplus H^2(\Sigma,\QQ)_F(-1) \to \langle w_3b^{-2},w_4b^{-4} \rangle_F \oplus H^4(X,\QQ)_{\mathrm{primitive},F}.$$
Our goal is to eliminate the extra copies of $\QQ$ and the non-archimedean field $F$.
Recall that the $\QQ_\ext$-evaluation map $\ev_{i,k}$ for $Y_i$ corresponds to the evaluation map $\ev_0$ for $X$ under the change of variables along the weak factorization.
Similarly, there exists a $\QQ_\ext$-evaluation map $\ev'$ for $\Sigma$ corresponding to the evaluation map $\ev_{i,k}$ for $Y_i$ under the change of variables along a weak factorization between $Y_i$ and $\Sigma$.
By Remark \ref{equality3}, we have the commutation relation
$$\ev_0\(\kappa^X_\tau\) \circ f = f \circ \ev'\(\kappa^\Sigma_\tau\).$$
As the dimensions match, the embedding
$$f \colon \langle \un_\Sigma b^2, [\pt] b^{-2} \rangle_{F_{\widetilde{\QQ}_\ext}} \oplus H^2(\Sigma,\QQ)_{F_{\widetilde{\QQ}_\ext}} \hookrightarrow \langle w_3b^{-2},w_4b^{-4} \rangle_{F_{\widetilde{\QQ}_\ext}} \oplus H^4(X,\QQ)_{\mathrm{primitive},F_{\widetilde{\QQ}_\ext}}$$
is an isomorphism of $F_{\widetilde{\QQ}_\ext}$-vector spaces.
Consequently, the endomorphisms $\ev_0\(\kappa^X_\tau\)$ and $\ev'\(\kappa^\Sigma_\tau\)$ are conjugate and share the same rank.

From Example \ref{Givental}, the endomorphism $\ev_0\(\kappa^X_\tau\)$ vanishes on the primitive part of the cohomology and satisfies
$$\ev_0\(\kappa^X_\tau\)(x w_3+ y w_4) = x w_4~,~~ \forall ~x,y \in \QQ.$$
From the proof of Corollary \ref{K3}, the endomorphism $\ev'\(\kappa^\Sigma_\tau\)$ vanishes except for
$$\ev'\(\kappa^\Sigma_\tau\)(\un_\Sigma) = -\ev'(T_p) [\pt].$$
By homogeneity of $\ev'$, we must have $\ev'(T_p) = \zeta \cdot b^{-2}$ for some $\zeta \neq 0 \in F_{\QQ_\ext}$.

Let $\(u_2,\dotsc, u_{p-1}\)$ of $H^2(\Sigma,\QQ)$ be a basis of $H^2(\Sigma,\QQ)$, and set $u_1:=\un_\Sigma b^2$, $u_p:=[\pt]b^{-2}$.
For each $1 \leq k \leq N$, we write
$$f_k(u_j) - \(x^{(k)}_j w_3b^{-2} + y^{(k)}_j w_4b^{-4}\) \in H^4(X,\QQ)_{\mathrm{primitive},F}~,~~x^{(k)}_j,y^{(k)}_j \in F.$$
Fixing a basis $\(v_1,\dotsc,v_q\)$ of $H^4(X,\QQ)_{\mathrm{primitive}}$, the conjugation relation implies that the matrix $A_k$ of $f_k$, relative to the bases $\(u_1,\dotsc, u_p\)$ and $\(w_3b^{-2},v_1,\dotsc,v_q,w_4b^{-4}\)$, takes the form
$$A_k := \begin{pmatrix}
x_1^{(k)} & 0 \dotsc 0 & 0 \\
\star & A'_k & 0 \\
\star & \star \dotsc \star & y_p^{(k)}
\end{pmatrix},$$
where $\sum_{k=1}^N s_k x_1^{(k)} = -\zeta \sum_{k=1}^N s_k y_p^{(k)}$
and $A'_k$ is the submatrix with coefficients in $F$ corresponding to the restriction
$$f'_k \colon H^2(\Sigma,\QQ)_F(-1) \to H^4(X,\QQ)_{\mathrm{primitive},F}.$$
The matrix of the isomorphism $f$, relative to the above bases, is given by $A=s_1 A_1 + \dotsb + s_N A_N$.
Since $A$ is invertible and lower block-triangular, the diagonal block $A'=s_1 A'_1 + \dotsb + s_N A'_N$ must also be invertible.

Choose an integer $L \in \NN$ such that, for each $1 \leq k \leq N$, the matrix
$$B'_k:=\(a^L \cdot A'_k\) \in \mathrm{Mat}\(\QQ[[a^{\QQ^*_+}]]\)$$
has positive valuation.
Let $\mathcal{Q}$ be the set of exponents appearing in $B'_1, \dotsc, B'_N$:
$$\mathcal{Q} = \left\lbrace q \in \QQ ~|~~\exists 1 \leq k \leq N ~\mathrm{st}~~\mathrm{Coeff}_{a^q}\(B'_k\) \neq 0 \right\rbrace \subset \QQ^*_+.$$
We write
$$B'_k = \sum_{l \in \mathcal{Q}} B'_{k,l} a^l \quad \textrm{with $B'_{k,l} \in \mathrm{Mat}(\QQ)$},$$
and denote by $g'_{k,l}$ the morphism of $\QQ$-vector spaces whose matrix in the above basis is $B'_{k,l}$.
Since the Hodge structure on $H^2(\Sigma,\QQ)_F(-1)$ (resp.~$H^4(X,\QQ)_{\mathrm{primitive},F}$) is obtained from the Hodge structure on $H^2(\Sigma,\QQ)(-1)$ (resp.~$H^4(X,\QQ)_{\mathrm{primitive}}$) by extension of scalars from $\QQ$ to $F$, the linear maps
$$g'_{k,l} \colon H^2(\Sigma,\QQ)(-1) \to H^4(X,\QQ)_{\mathrm{primitive}}$$
are morphisms of Hodge structures.
We seek a $\QQ$-linear combination of these maps that constitutes an isomorphism.

For any $M \geq 0$, fix a positive integer $A_M \in \NN^*$ such that the finite set $\mathcal{Q} \cap \left[ 0,M \right]$ is contained in $\left\lbrace \frac{p}{A_M} ~|~~1 \leq p \leq MA_M \right\rbrace$.
Suppose that for every $M \geq 0$ and every $\(q_1, \dotsc, q_N\) \in \QQ^N$, we have
$$\det\( \sum_{l=1}^{M A_M} q_1^l B'_{1,l} + \dotsb + q_N^l B'_{N,l} \)=0.$$
This would imply that the polynomial
$$\det\( \sum_{l=1}^{MA_M} X_1^l B'_{1,l} + \dotsb + X_N^l B'_{N,l} \)=0$$
is identically zero.
Consequently, for any $M \geq 0$, the following truncation would vanish:
$$\det\( \sum_{l=1}^{M A_M} \(s_1^{1/l}a^{1/A_M}\)^l B'_{1,l} + \dotsb + \(s_N^{1/l} a^{1/A_M}\)^l B'_{N,l} \)=0.$$
Let $D \geq 0$ be the valuation of the non-zero determinant
$$\det \( s_1 B'_1+ \dotsb + s_N B'_N \) \neq 0 \in a^D\cdot \QQ_\ext[[a^{\QQ_+}]].$$
For $M>D$, the vanishing of the truncation leads to a contradiction.
Therefore, there must exist some $M \geq 0$ and $\(q_1, \dotsc, q_N\) \in \QQ^N$ such that
$$\det\( \sum_{l=1}^{MA_M} q_1^l B'_{1,l} + \dotsb + q_N^l B'_{N,l} \) \neq 0.$$
The resulting $\QQ$-linear combination
$$\sum_{l=1}^{MA_M} q_1^l g'_{1,l} + \dotsb + q_N^l g'_{N,l}$$
is thus an isomorphism of Hodge structures, where $\Sigma$ is a projective $K3$ surface.
\end{proof}

\end{document}